\title{A monoidal Grothendieck construction for $\infty$-categories}
\author{ Maxime Ramzi}
\date{}
\newtheorem{thm}{Theorem}[section]
\newaliascnt{lm}{thm}  
\newtheorem{lm}[lm]{Lemma}
\Crefname{lm}{Lemma}{Lemmas}
\newaliascnt{prop}{thm}  
\newtheorem{prop}[prop]{Proposition}
\Crefname{prop}{Proposition}{Propositions}
\newaliascnt{cor}{thm}  
\newtheorem{cor}[cor]{Corollary}
\Crefname{cor}{Corollary}{Corollaries}
\newtheorem*{thm*}{Theorem}
\newtheorem*{cor*}{Corollary}
\newtheorem{thmx}{Theorem}
\newtheorem{corx}{Corollary}
\theoremstyle{definition}
\newaliascnt{defn}{thm}  
\newtheorem{defn}[defn]{Definition}
\Crefname{defn}{Definition}{Definitions}
\newaliascnt{cons}{thm}  
\Crefname{cons}{Construction}{Constructions}
\newaliascnt{nota}{thm}  
\Crefname{nota}{Notation}{Notations}
\newaliascnt{conv}{thm}  
\Crefname{conv}{Convention}{Conventions}
\newaliascnt{ex}{thm}  
\newtheorem{ex}[ex]{Example}
\Crefname{ex}{Example}{Examples}
\newaliascnt{rmk}{thm}  
\newtheorem{rmk}[rmk]{Remark}
\Crefname{rmk}{Remark}{Remarks}
\newaliascnt{ques}{thm}  
\Crefname{ques}{Question}{Questions}
\newaliascnt{conj}{thm}  
\Crefname{conj}{Conjecture}{Conjectures}
\newaliascnt{warn}{thm}  
\newtheorem{warn}[warn]{Warning}
\Crefname{warn}{Warning}{Warnings}
\newaliascnt{obs}{thm}  
\newtheorem{obs}[obs]{Observation}
\Crefname{obs}{Observation}{Observations}
\newtheorem*{ques*}{Question}
\newtheorem*{rmk*}{Remark}
\newtheorem*{ex*}{Example}
\newtheorem*{defn*}{Definition}
\newaliascnt{recoll}{thm}  
\Crefname{recoll}{Recollection}{Recollections}
\newcommand{\op}{^{\mathrm{op}}}
\newcommand{\cat}{\mathbf}
\newcommand{\on}{\operatorname}
\newcommand{\Fun}{\on{Fun}}
\newcommand{\map}{\on{map}}
\newcommand{\CMon}{\mathrm{CMon}}
\newcommand{\Ss}{\mathbf{An}}
\newcommand{\Fin}{\mathrm{Fin}}
\newcommand{\PrL}{\cat{Pr}^\mathrm{L}}
\newcommand{\Alg}{\mathrm{Alg}}
\newcommand{\CAlg}{\mathrm{CAlg}}
\newcommand{\Mod}{\cat{Mod}}
\newcommand{\colim}{\mathrm{colim}}
\newcommand{\Cat}{\cat{Cat} }
\newcommand{\C}{\cat C}
\newcommand{\D}{\cat D}
\newcommand{\E}{\cat E}
\newcommand{\Oo}{\mathcal O}
\newcommand{\LFib}{\cat{LFib}}
\newcommand{\coCart}{\cat{coCart}}
\newcommand{\RFib}{\cat{RFib}}
\newcommand{\Cart}{\cat{Cart}}
\newcommand{\pend}{\unskip\nobreak\hfill$\triangleleft$}
\DeclareFontFamily{U}{min}{}
\DeclareFontShape{U}{min}{m}{n}{<-> udmj30}{}
\providecommand{\keywords}[1]
{
  \small	
  \textbf{\textit{Keywords---}} #1
}
\begin{document}

\maketitle
\begin{abstract}
  We construct a monoidal version of Lurie's un/straightening equivalence. In more detail, for any symmetric monoidal $\infty$-category $\C$, we endow the $\infty$-category of coCartesian fibrations over $\C$ with a (naturally defined) symmetric monoidal structure, and prove that it is equivalent the Day convolution monoidal structure on the $\infty$-category of functors from $\C$ to $\Cat_\infty$. In fact, we do this over any $\infty$-operad by categorifying this statement and thereby proving a stronger statement about the functors that assign to an $\infty$-category $\C$ its category of coCartesian fibrations on the one hand, and its category of functors to $\Cat_\infty$ on the other hand. 
\end{abstract}

\newcommand\blfootnote[1]{%
  \begingroup
  \renewcommand\thefootnote{}\footnote{#1}%
  \addtocounter{footnote}{-1}%
  \endgroup
}
\blfootnote{2010 Mathematics Subject Classification: 18N70, 18N60\\
\keywords{Grothendieck construction, $\infty$-categories, $\infty$-operads}}
\tableofcontents
\newpage 
\section*{Introduction}
\addcontentsline{toc}{section}{Introduction}
\subsection*{Overview}
 \addcontentsline{toc}{subsection}{Overview}
The goal of this paper is to provide a monoidal version of the Grothendieck construction for $\infty$-categories, also known as straightening/unstraightening after \cite{HTT}.

Recall that this is a theorem which states that for any $\infty$-category $\C$, there is an equivalence of $\infty$-categories $\coCart_\C \simeq \Fun(\C,\Cat_\infty)$, where the left hand side is the (non-full) subcategory of $(\Cat_\infty)_{/\C}$ spanned by coCartesian fibrations, and morphisms of coCartesian fibrations between them (see \cite[Theorem 3.2.0.1]{HTT}).

If $\C$ is equipped with a symmetric monoidal structure, one can ask to what extent symmetric monoidal functors $\C\to \Cat_\infty$ correspond to ``symmetric monoidally coCartesian fibrations'', where $\Cat_\infty$ is equipped with the \emph{cartesian} symmetric monoidal structure. Our goal is to answer this question, inspired by \cite{MV} and the $1$-categorical version of our question therein.

We proceed in two steps: we start by doing a pointwise version of a monoidal Grothendieck construction which is well-known (it is essentially already contained in \cite{HA}, but we make it explicit for the convenience of the reader - see also \cite[Proposition A.2.1]{hinichrect}) and can be stated as follows, see  \Cref{thm : microcosm} (and see \Cref{section : micro} for a definition of $\Oo$-monoidally coCartesian fibrations):
\begin{thm}\label{thm:microintro}
    Let $\Oo$ be an $\infty$-operad and $\C$ an $\Oo$-monoidal $\infty$-category. There is an equivalence of $\infty$-categories $$\Fun^{lax-\Oo}(\C,\Cat)\simeq \coCart_\C^\Oo$$
between the $\infty$-category of lax $\Oo$-monoidal functors $\C\to \Cat$ and the $\infty$-category of $\Oo$-~monoidally coCartesian fibrations over $\C$ which, on underlying objects is the un/straightening equivalence.
\end{thm}

The second step is to apply this to a specific coCartesian fibration, namely the fibration $\coCart\to\Cat_\infty$ whose fiber over an $\infty$-category $\C$ is the $\infty$-category $\coCart_\C$. We show that when it is endowed with the cartesian symmetric monoidal structure, it fits into the context of the previous pointwise construction. This endows the functor $\C\mapsto \coCart_\C$ with a lax symmetric monoidal structure as a functor $\Cat_\infty\to \widehat{\Cat}_\infty$ (see  \Cref{defn : cancocart}), where the latter is the $\infty$-category of possibly large $\infty$-categories, equipped with its cartesian symmetric monoidal structure. 

Our main result compares this lax symmetric monoidal structure with the lax symmetric monoidal structure on $\C\mapsto \Fun(\C,\Cat_\infty)$ constructed from the universal property of presheaf $\infty$-categories, which amounts to a Day-convolution-style lax symmetric monoidal structure (see \Cref{defn : canday} - note that the \emph{functoriality} of the latter, that is, without symmetric monoidal structures is also induced from this universal property\footnote{Although this can be compared with other functorialities, see \cite{HHLN2}}).  We refer the reader to \Cref{section : meta} and the aforementioned definitions for details about these monoidal structures,  see also \Cref{rmk:content}.

With this in mind, our main result is the following:

\begin{thmx}\label{thm : metacosm}
The straightening/unstraightening equivalence $$\Fun(\C,\Cat_\infty)\simeq \coCart_\C$$ can be enhanced to an equivalence of lax symmetric monoidal functors between the canonical lax symmetric monoidal structures on $\C\mapsto \Fun(\C,\Cat_\infty)$ (\Cref{defn : canday}) and $\C\mapsto~\coCart_\C$ (\Cref{defn : cancocart}) respectively . 

In fact, this equivalence can be made $\Cat_\infty$-linear: it lifts to an equivalence of lax symmetric monoidal functors $\Cat_\infty \to \Mod_{\Cat_\infty}(\widehat{\Cat}_\infty)$, where $\Cat_\infty$ is viewed as a commutative algebra in $\widehat{\Cat}_\infty$, and this is why this module $\infty$-category makes sense. 
\end{thmx}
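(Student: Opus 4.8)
The plan is to deduce the global (``metacosm'') statement from the pointwise monoidal Grothendieck construction of Theorem \ref{thm : microcosm} by applying that theorem to the single $\infty$-category $\Cat_\infty$, equipped with its cartesian symmetric monoidal structure. Concretely, I would first invoke a large variant of Theorem \ref{thm : microcosm} (with target $\widehat{\Cat}_\infty$ rather than $\Cat_\infty$) with $\Oo$ the commutative operad and $\C = \Cat_\infty$: it identifies lax symmetric monoidal functors $\Cat_\infty \to \widehat{\Cat}_\infty$ with symmetric monoidal coCartesian fibrations over $(\Cat_\infty, \times)$. Both functors in the theorem, $\C \mapsto \coCart_\C$ (Definition \ref{defn : cancocart}) and $\C \mapsto \Fun(\C, \Cat_\infty)$ (Definition \ref{defn : canday}), are such lax symmetric monoidal functors, and, crucially, their underlying functors agree by the naturality of straightening \cite{HTT}: both are classified by one and the same coCartesian fibration $\coCart \to \Cat_\infty$. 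Thus, under Theorem \ref{thm : microcosm}, each of the two lax symmetric monoidal structures corresponds to a symmetric monoidal structure on the total $\infty$-category $\coCart$ (compatible with the projection to $\Cat_\infty$), and the whole theorem reduces to identifying these two symmetric monoidal structures on $\coCart$.

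To carry out this identification I would show that both structures are \emph{cartesian}. For Definition \ref{defn : cancocart} this holds by construction. For Definition \ref{defn : canday} the point is to unwind the universal property of the presheaf $\infty$-category and observe that the induced lax structure maps $\Fun(\C, \Cat_\infty) \times \Fun(\D, \Cat_\infty) \to \Fun(\C \times \D, \Cat_\infty)$ are the external products $(F, G) \mapsto F \boxtimes G$, with $(F \boxtimes G)(c,d) \simeq F(c) \times G(d)$; if $E \to \C$ and $E' \to \D$ unstraighten $F$ and $G$, then unstraightening $F \boxtimes G$ yields exactly the categorical product $E \times E'$ in the total space $\coCart$ (its fiber over $(c,d)$ being $E_c \times E'_d$), while the Day unit $\Fun(\ast, \Cat_\infty) = \Cat_\infty$ singles out the terminal object $\ast$, i.e. the terminal object of $\coCart$. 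Hence the transported tensor is the product and its unit is terminal, so this structure is cartesian as well. By the essential uniqueness of the cartesian symmetric monoidal structure on an $\infty$-category with finite products \cite{HA}, the two symmetric monoidal structures on $\coCart$ are canonically equivalent; since the projection $\coCart \to \Cat_\infty$ is product-preserving in both cases, this equivalence is compatible with the fibration, i.e. it is an equivalence of symmetric monoidal coCartesian fibrations over $\Cat_\infty$. Feeding this back through Theorem \ref{thm : microcosm} yields the desired equivalence of lax symmetric monoidal functors, refining the underlying straightening equivalence.

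For the $\Cat_\infty$-linear enhancement I would argue formally. Any lax symmetric monoidal functor $F \colon (\Cat_\infty, \times) \to (\widehat{\Cat}_\infty, \times)$ sends the monoidal unit $\mathbf 1 = \ast$ to a commutative algebra $F(\mathbf 1)$, and, via the equivalence $\Mod_{\mathbf 1}(\Cat_\infty) \simeq \Cat_\infty$ together with the functoriality of $\Mod$ on lax symmetric monoidal functors (which produces $\Mod_{A} \to \Mod_{F(A)}$ lax symmetric monoidally), lifts canonically to a lax symmetric monoidal functor $\Cat_\infty \to \Mod_{F(\mathbf 1)}(\widehat{\Cat}_\infty)$ refining $F$; the action of $F(\mathbf 1)$ on $F(\C)$ is the lax structure map $F(\mathbf 1) \times F(\C) \to F(\C)$. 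In both of our cases $F(\mathbf 1) = \coCart_\ast = \Cat_\infty = \Fun(\ast, \Cat_\infty)$ with its cartesian algebra structure, so both functors lift to $\Mod_{\Cat_\infty}(\widehat{\Cat}_\infty)$, and the action recovers the expected tensoring over $\Cat_\infty$ (on the fibration side, $(D, E \to \C) \mapsto (D \times E \to \C)$). Since the equivalence produced above is an equivalence of lax symmetric monoidal functors preserving the unit $\Cat_\infty$ and its algebra structure, applying $\Mod$ yields the claimed equivalence of lax symmetric monoidal functors into $\Mod_{\Cat_\infty}(\widehat{\Cat}_\infty)$.

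I expect the main obstacle to be the middle step: rigorously transporting the presheaf-theoretic lax structure of Definition \ref{defn : canday} onto the total space $\coCart$ and recognizing it as the cartesian one. The difficulty is twofold. First, one must match the structure maps coherently rather than merely objectwise, and here the reduction to a single symmetric monoidal structure on $\coCart$ via Theorem \ref{thm : microcosm} is precisely what tames the infinite tower of coherences. Second, one must apply the uniqueness of cartesian structures in a form compatible with the coCartesian fibration to $\Cat_\infty$, not just on $\coCart$ as an abstract $\infty$-category. I would also need the naturality of straightening as an equivalence of functors $\Cat_\infty \to \widehat{\Cat}_\infty$, so that both structures genuinely live on the one fibration $\coCart \to \Cat_\infty$; I would take this from the construction of $\coCart$ preceding Definition \ref{defn : cancocart}.
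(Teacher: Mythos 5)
Your strategy is essentially correct, but it takes a genuinely different route from the paper. The paper never compares the two lax structures directly on the total category $\coCart$. Instead, it extracts from the inclusion $\LFib^{rep}\subset\coCart$ a symmetric monoidal transformation $\C\op\to\coCart_\C$ (corollary \ref{cor : yonmon}), lifts $\C\mapsto\coCart_\C$ to $\Mod_{\Cat}(\PrL)$ (proposition \ref{prop : liftcocart}), and then invokes the symmetric monoidal universal property of presheaf categories (plus $-\otimes\Cat$) to extend the Yoneda-level transformation uniquely to a $\Cat$-linear, colimit-preserving symmetric monoidal transformation $\Fun(\C,\Cat)\to\coCart_\C$; the underlying functor is finally identified with unstraightening because both are $\Cat$-linear colimit-preserving functors agreeing on representables (lemma \ref{lm : catlinear}). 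Your route instead pushes both lax structures through theorem \ref{thm : microcosm} at the single base $\Cat$, so that they become two symmetric monoidal structures on the one fibration $\coCart\to\Cat$, checks that both are cartesian, and concludes by the uniqueness of cartesian symmetric monoidal structures and of monoidal refinements of product-preserving functors (\cite[Section 2.4.1]{HA}). This is a legitimate and rather elegant alternative: it trades the free-cocompletion machinery for the ``cartesianness is a property'' principle (exactly the philosophy advertised in the paper's introduction), at the cost of having to identify the canday structure maps explicitly as external products and to verify the criterion of \cite[Proposition 2.4.1.7]{HA} via the external-product formula for unstraightening --- the same computation that appears in the proof of proposition \ref{prop : cocartcart}. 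Your $\Cat$-linearity argument, on the other hand, is not different at all: it is precisely the paper's proposition \ref{prop : liftcocart}.

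Two caveats. First, the input you flag --- that the two underlying functors $\Cat\to\widehat{\Cat}$ agree, i.e.\ that the pointwise unstraightening equivalences assemble into a natural equivalence between the functor classified by $\coCart$ and $\C\mapsto\Fun(\C,\Cat)$ with its left Kan extension functoriality --- is \emph{not} provided by the construction preceding definition \ref{defn : cancocart} (that construction only identifies the pushforwards pointwise, under the pointwise equivalence), nor by \cite{HTT}. The correct source is the naturality of un/straightening, \cite[Corollary A.32]{gepnerhaugsengnik} (see also \cite{HHLN}); this is exactly the input the paper itself uses in the proof of lemma \ref{lm : catlinear}, so it is available, but your argument genuinely depends on it and should cite it as such. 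Second, in the linearity step you assert that $\coCart_\ast\simeq\Cat$ ``with its cartesian algebra structure''; identifying the commutative algebra structure that the lax symmetric monoidal functor of definition \ref{defn : cancocart} puts on the fiber $\coCart_\ast$ requires an argument --- this is what lemma \ref{lm A} is for in the paper --- though in your framework it can also be extracted from the fact that you have already identified the whole monoidal fibration as $\coCart^\times\to\Cat^\times$. Neither point invalidates the approach; both are repairable with material the paper itself uses.
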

 Equivalences of lax symmetric monoidal functors induce equivalences of $\Oo$-algebras when applied to $\Oo$-algebras for any $\infty$-operad $\Oo$, so we deduce the following, which is known in the $1$-categorical case \cite{MV}, but as far as we are aware even this special case does not appear in the $\infty$-categorical literature (see \Cref{thm : macroprecise} for a precise description of the $\Oo$-$\infty$-operad structure of $\coCart_\C$)  :  
\begin{thmx}\label{thm  : macrocosm}
Let $\Oo$ be an $\infty$-operad and $\C$ an $\Oo$-monoidal $\infty$-category.

The straightening/unstraightening equivalence can be enhanced to a ($\Cat_\infty$-linear) $\Oo$-monoidal equivalence $$\Fun(\C,\Cat_\infty)\simeq \coCart_\C$$ where the domain has the Day convolution $\Oo$-monoidal structure, and the target $\Oo$-$\infty$-operad $(\coCart_\C)^\otimes$ is the full sub-$\Oo$-operad of $((\Cat_\infty)_{/\C})^\otimes$ consisting of those multi-morphisms that preserve coCartesian edges in each variable. 
\end{thmx}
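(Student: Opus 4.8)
The plan is to deduce this statement formally from Theorem~\ref{thm : metacosm} by evaluating the equivalence of lax symmetric monoidal functors on $\Oo$-algebras, and then to identify the two resulting $\Oo$-monoidal structures with the concrete descriptions in the statement. Recall that an $\Oo$-monoidal $\infty$-category is precisely an $\Oo$-algebra in $(\Cat_\infty,\times)$, i.e.\ an object of $\Alg_\Oo(\Cat_\infty)$ for the cartesian symmetric monoidal structure; writing $\C$ again for such an algebra, I want to feed it into the two lax symmetric monoidal functors $\C\mapsto\Fun(\C,\Cat_\infty)$ and $\C\mapsto\coCart_\C$ of definitions~\ref{defn : canday} and~\ref{defn : cancocart}.

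First I would invoke the general principle recalled in the introduction: a lax symmetric monoidal functor $F\colon\cat D\to\cat E$ induces, for every $\infty$-operad $\Oo$, a functor $\Alg_\Oo(\cat D)\to\Alg_\Oo(\cat E)$, and a symmetric monoidal natural equivalence between two such functors induces a natural equivalence between the induced functors on $\Oo$-algebras. Applying this to the equivalence of Theorem~\ref{thm : metacosm} and to the $\Oo$-algebra $\C\in\Alg_\Oo(\Cat_\infty)$ immediately yields an equivalence of $\Oo$-algebras in $\widehat{\Cat}_\infty$, i.e.\ of $\Oo$-monoidal $\infty$-categories, $\Fun(\C,\Cat_\infty)\simeq\coCart_\C$, lying over the straightening/unstraightening equivalence on underlying $\infty$-categories. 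The $\Cat_\infty$-linear refinement is obtained identically, using instead the lift of Theorem~\ref{thm : metacosm} to functors valued in $\Mod_{\Cat_\infty}(\widehat{\Cat}_\infty)$, whose $\Oo$-algebras are the $\Cat_\infty$-linear $\Oo$-monoidal $\infty$-categories.

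It then remains to recognize the two $\Oo$-monoidal structures produced by this abstract procedure. On the functor side, I would check that applying the canonical lax structure of definition~\ref{defn : canday} to the $\Oo$-algebra $\C$ recovers the Day convolution $\Oo$-monoidal structure; since that canonical structure is assembled from the universal property of the presheaf $\infty$-category, this should reduce to the universal characterization of Day convolution. On the coCartesian side, I would unwind definition~\ref{defn : cancocart} evaluated on $\C$ and match the resulting $\Oo$-operad with the full sub-$\Oo$-operad of $((\Cat_\infty)_{/\C})^\otimes$ spanned by those multimorphisms that preserve coCartesian edges in each variable; this is the precise identification recorded in Theorem~\ref{thm : macroprecise}, to which I would refer for the details.

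I expect the main obstacle to be this last identification on the coCartesian side. The abstract $\Oo$-monoidal structure comes from the cartesian symmetric monoidal structure on the total space of the fibration $\coCart\to\Cat_\infty$ transported through the pointwise monoidal Grothendieck construction of Theorem~\ref{thm : microcosm}, and it is a priori opaque; proving that its multimorphism $\infty$-categories are exactly the subcategories of $((\Cat_\infty)_{/\C})^\otimes$ cut out by the ``coCartesian in each variable'' condition requires a careful analysis of how that cartesian structure restricts over tuples of objects of $\C$. By contrast, the functor-side identification should be comparatively formal once the universal property of Day convolution is available.
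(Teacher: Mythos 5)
Your proposal matches the paper's own proof of this theorem essentially step for step: Section \ref{section : macro} likewise obtains the equivalence formally by evaluating the lax symmetric monoidal equivalence of Theorem \ref{thm : metacosm} on the $\Oo$-algebra $\C\in\Alg_\Oo(\Cat)$, identifies the functor side with Day convolution (lemma \ref{lm : abstractisday}, via $\Oo$-monoidality of the Yoneda embedding, which is the universal-property argument you invoke), and establishes the fibration-side description exactly as Theorem \ref{thm : macroprecise} (proved there using lemma \ref{lm A} and pullback squares over $\Cat^\times$). Since you correctly isolate that last identification as the only substantive step and defer to the paper's independently proved Theorem \ref{thm : macroprecise} for it, your argument is complete and takes the same route as the paper.
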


%\begin{rmk}\label{rmk : nontriv1}
%Note that the $\Oo$-monoidal structure on $\coCart_\C$ is meaningful independently of the straightening/unstraightening equivalence: we are not simply transporting the Day convolution monoidal structure on $\Fun(\C,\Cat)$ along the equivalence, otherwise this would be an empty statement.
%\pend\end{rmk}
\begin{rmk}
Taking this theorem for granted, any lax $\Oo$-monoidal functor $\C\to \Cat_\infty$ yields an $\Oo$-algebra in $\Fun(\C,\Cat_\infty)\simeq \coCart_\C$, and thus, by the description of the $\Oo$-$\infty$-operad structure of the latter, in $(\Cat_\infty)_{/\C}$. In the latter, these correspond to $\Oo$-monoidal functors $\D\to \C$. 

So the monoidal straightening/unstraightening equivalence yields an equivalence between $\Fun^{lax-\Oo}(\C,\Cat_\infty)$ and the subcategory of $\Alg_{\Oo}(\Cat_\infty)_{/\C}$ whose objects are the morphisms $\D\to \C$ that are coCartesian fibrations, and where the $\Oo$-operations preserves coCartesian edges; and morphisms are morphisms of $\Oo$-algebras over $\C$ that also preserve coCartesian edges. 
\pend\end{rmk}

\Cref{thm : metacosm} and \Cref{thm : macrocosm} have obvious analogues when replacing coCartesian fibrations with left fibrations, and $\Cat_\infty$-valued functors with $\Ss$-valued functors, where $\Ss$ is the $\infty$-category of anima\footnote{Anima or homotopy types or spaces or $\infty$-groupoids.}. These analogues are also immediate consequences of these theorems: 
\begin{corx}\label{cor : corlfib}
The anima-valued un/straightening equivalence can be made into a symmetric monoidal equivalence of functors $\Cat_\infty\to \PrL$ : $$\Fun(\C,\Ss)\simeq \LFib_\C$$
If $\Oo$ is an operad and $\C$ an $\Oo$-monoidal category, this specializes to an $\Oo$-monoidal equivalence between the Day-convolution and some $\Oo$-monoidal structure on $\LFib_\C$ analogous to the one described in \Cref{thm : macroprecise}. Taking $\Oo$-algebras therein specializes to an equivalence $$\Fun^{lax-\Oo}(\C,\Ss)\simeq \LFib^\Oo_\C$$
where the latter is defined analogously to \Cref{defn : monfib}. 
\end{corx}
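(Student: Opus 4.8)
The plan is to deduce the whole corollary from Theorems \ref{thm : metacosm} and \ref{thm : macrocosm} by restricting everything along the inclusion of spaces into $\infty$-categories. Recall that $\iota\colon \Ss \hookrightarrow \Cat_\infty$ is a fully faithful symmetric monoidal functor for the cartesian structures whose essential image — the groupoidal $\infty$-categories — is closed under products and under all colimits in $\Cat_\infty$. On the fibration side, left fibrations are exactly the coCartesian fibrations with groupoidal fibers, so $\LFib_\C$ sits as a full subcategory of $\coCart_\C$, naturally in $\C$; and under straightening this inclusion corresponds to postcomposition with $\iota$, that is, to $\Fun(\C,\Ss)\hookrightarrow \Fun(\C,\Cat_\infty)$ (this is the space-valued un/straightening of \cite{HTT}, obtained by restricting the $\Cat_\infty$-valued one). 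So I would first record that these two objectwise fully faithful inclusions assemble into a natural transformation between the corresponding subfunctors of $\C\mapsto\coCart_\C$ and $\C\mapsto\Fun(\C,\Cat_\infty)$, and that under the equivalence of Theorem \ref{thm : metacosm} they match.

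The only point with genuine content is to check that the canonical lax symmetric monoidal structures restrict to these subfunctors, i.e. that $\Fun(\C,\Ss)$ (equivalently $\LFib_\C$) contains the unit and is closed under the tensor operations, naturally in $\C$. On the Day convolution side this is immediate from the closure properties of $\iota$: the unit of the Day structure is the corepresentable functor $\Map_\C(\mathbb 1,-)$ at the monoidal unit, which is space-valued, and the Day convolution $(F\otimes G)(c)=\colim_{c_1\otimes c_2\to c}F(c_1)\times G(c_2)$ of space-valued functors is again space-valued since spaces are stable under products and colimits. Hence $\Fun(-,\Ss)\hookrightarrow\Fun(-,\Cat_\infty)$ is a sub-lax-symmetric-monoidal functor of the canonical Day structure. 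Transporting this closure across the equivalence of Theorem \ref{thm : metacosm} yields the same for $\LFib_\C\subseteq\coCart_\C$; concretely, the multimorphisms preserving coCartesian edges in each variable — the description of $(\coCart_\C)^\otimes$ in Theorem \ref{thm : macrocosm} — carry groupoidal fibers to groupoidal fibers. Restricting the equivalence of Theorem \ref{thm : metacosm} to these corresponding subfunctors then produces an equivalence of lax symmetric monoidal functors $\C\mapsto\Fun(\C,\Ss)\simeq\C\mapsto\LFib_\C$, together with its $\Cat_\infty$-linear refinement.

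It remains to upgrade the target and the strength of the monoidal structure as in the statement. Since $\Fun(\C,\Ss)\simeq\mathcal P(\C\op)$ is the free cocompletion, the functor $\C\mapsto\Fun(\C,\Ss)$ factors through $\PrL$, with functoriality given by left Kan extension, and the external products $\Fun(\C,\Ss)\otimes\Fun(\D,\Ss)\to\Fun(\C\times\D,\Ss)$ are equivalences because $\mathcal P(\C\op)\otimes\mathcal P(\D\op)\simeq\mathcal P((\C\times\D)\op)$ in $\PrL$; thus on the space-valued side the canonical lax structure is in fact a strong symmetric monoidal structure, and the same holds for $\LFib$ after transport along the equivalence just obtained. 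Finally, for an $\infty$-operad $\Oo$ and an $\Oo$-monoidal $\C$, applying $\Alg_\Oo(-)$ to the resulting $\Oo$-monoidal equivalence gives the last assertion: on the Day side $\Alg_\Oo(\Fun(\C,\Ss))\simeq\Fun^{lax-\Oo}(\C,\Ss)$, while on the fibration side $\Alg_\Oo((\LFib_\C)^\otimes)$ is $\LFib^\Oo_\C$ by the evident analogue of definition \ref{defn : monfib}.

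The main obstacle is not conceptual but bookkeeping: making the closure verifications of the second paragraph natural over all of $\Cat$ at once, so that one genuinely obtains a \emph{subfunctor} of lax symmetric monoidal functors rather than merely a pointwise inclusion for each fixed $\C$. Once Theorem \ref{thm : metacosm} is in hand this is routine, since closure can be checked on whichever of the two equivalent sides is more convenient (the Day side for the unit and tensor, the fibration side for the $\Oo$-operadic description), and everything else is formal transport along an equivalence followed by passage to $\Oo$-algebras.
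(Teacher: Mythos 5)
Your proposal is correct and follows essentially the same route as the paper: the paper deduces Corollary \ref{cor : corlfib} precisely by restricting the equivalence of Theorem \ref{thm : metacosm} along the full inclusion $\Ss\subset\Cat$, using that space-valued functors correspond to left fibrations under un/straightening and that both sides are closed under the relevant structure maps. Your write-up merely makes explicit the closure and strong-monoidality-into-$\PrL$ checks that the paper leaves as ``we automatically get.''
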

Specializing further to the case where $\C= X$ is an anima, we obtain the following folklore result: 

\begin{corx}
Let $\Oo$ be an operad and $X$ an $\Oo$-algebra in anima. There is an equivalence of $\Oo$-monoidal categories $$\Fun(X,\Ss)\simeq \Ss_{/X}$$
where the left hand side has the Day convolution structure, while the right hand side has the comma category $\Oo$-monoidal structure. 
\end{corx}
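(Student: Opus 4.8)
The plan is to deduce this as a direct specialization of Corollary \ref{cor : corlfib}, the only real work being to identify, when $\C = X$ is a space, the $\infty$-category $\LFib_X$ with $\Ss_{/X}$ and the $\Oo$-monoidal structure produced there with the comma-category one. Concretely, I regard $X$ as an $\Oo$-algebra in $\Cat_\infty$ via the (cartesian symmetric monoidal) inclusion $\Ss \hookrightarrow \Cat_\infty$, so that Corollary \ref{cor : corlfib} supplies an $\Oo$-monoidal equivalence $\Fun(X,\Ss) \simeq \LFib_X$, with the Day convolution on the source and the sub-$\Oo$-operad structure (analogous to \ref{thm : macroprecise}, i.e. the full sub-$\Oo$-operad of $((\Cat_\infty)_{/X})^\otimes$ on multimorphisms preserving coCartesian edges in each variable) on the target. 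It therefore suffices to prove that this target agrees, as an $\Oo$-monoidal category, with $\Ss_{/X}$ equipped with the comma structure.

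First I would treat the underlying $\infty$-categories. Both $\LFib_X$ and $\Ss_{/X}$ sit inside $(\Cat_\infty)_{/X}$ as full subcategories: the former on left fibrations, and the latter, realized as $\Ss \times_{\Cat_\infty} (\Cat_\infty)_{/X}$, on maps whose source is a space. When $X$ is a space these two full subcategories coincide, because a map $E \to X$ into a space is a left fibration precisely when $E$ is a space: the total space of a left fibration over an $\infty$-groupoid is again an $\infty$-groupoid, and conversely any map of spaces is (equivalent to) a left fibration, since over an $\infty$-groupoid every edge is automatically coCartesian and every fiber is a space. Hence $\LFib_X \simeq \Ss_{/X}$ as full subcategories of $(\Cat_\infty)_{/X}$.

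The main point is then to match the monoidal structures, which I would carry out entirely inside $((\Cat_\infty)_{/X})^\otimes$. This ambient structure is the comma $\Oo$-monoidal structure attached to the cartesian structure on $\Cat_\infty$ and the $\Oo$-algebra $X$; explicitly, a multimorphism $(E_1 \to X),\dots,(E_n\to X) \to (F\to X)$ over $\mu \in \Oo$ is a functor $E_1\times\cdots\times E_n \to F$ lying over $\mu\colon X^n\to X$. Since $\Ss \hookrightarrow \Cat_\infty$ is a full symmetric monoidal subcategory (closed under finite products) containing $X$, the comma construction identifies $\Ss_{/X}^\otimes$ with the full sub-$\Oo$-operad of $((\Cat_\infty)_{/X})^\otimes$ spanned by the objects of $\Ss_{/X}$: between space-valued objects the relevant $\infty$-categories of functors over $X$ are computed the same way in $\Ss$ and in $\Cat_\infty$. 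It remains to see that $\LFib_X^\otimes$ is this same full sub-$\Oo$-operad. As it has the same objects, I only need the defining condition — preservation of coCartesian edges in each variable — to be vacuous here: fixing all but one input at objects, a multimorphism restricts to a functor between left fibrations over $X$, and over an $\infty$-groupoid every edge of a left fibration is coCartesian while every functor preserves all edges, so the condition holds automatically. Thus $\LFib_X^\otimes = \Ss_{/X}^\otimes$, and composing with Corollary \ref{cor : corlfib} yields the asserted $\Oo$-monoidal equivalence.

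The only genuinely delicate step is this last identification of the two $\Oo$-monoidal structures on $\Ss_{/X} = \LFib_X$: one must check that the sub-$\Oo$-operad of $((\Cat_\infty)_{/X})^\otimes$ cut out by the coCartesian-edge condition really is full on the relevant objects, and that the ambient comma structure on $(\Cat_\infty)_{/X}$ restricts to the comma structure on $\Ss_{/X}$ rather than to some a priori different one. Both reduce to the elementary observation that over a space there are no nontrivial coCartesian-edge conditions to impose, but making this precise at the level of $\Oo$-operads, as opposed to underlying $\infty$-categories, is where the care is required.
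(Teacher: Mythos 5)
Your proof is correct and follows essentially the same route as the paper: the paper obtains this corollary by specializing Corollary \ref{cor : corlfib} to $\C = X$ a space, using exactly the observation that $\LFib_X \subset \Ss_{/X}$ is an equivalence, and your fleshing-out of why the sub-$\Oo$-operad of $((\Cat_\infty)_{/X})^\otimes$ cut out by the (vacuous, since every edge of a left fibration is coCartesian) conditions coincides with the comma structure is just the detail the paper leaves implicit. Incidentally, the paper's subsequent remark records a genuinely different and simpler independent proof -- comparing the two functors $X \mapsto \Ss_{/X}$ and $X \mapsto \Fun(X,\Ss)$ via the universal property of $\Ss$ in $\PrL$ -- which you may find worth knowing, but your argument is not that one.
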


 \subsection*{Relation to other work}
 \addcontentsline{toc}{subsection}{Relation to other work}
 Our approach is inspired by \cite{MV}, and specifically Lemma 3.9 therein, namely the observation that the coCartesian fibration $\coCart\to \Cat_\infty$ is cartesian, which drives our construction; see also \cite[Proposition 6.10]{haugchu}. 
We make some comments about the differences between our version and the $1$-categorical version of these statements from \cite{MV}: 
\begin{itemize}
    \item While \Cref{thm:microintro} has some content $1$-categorically, the formalism of $\infty$-operads and symmetric monoidal $\infty$-categories makes it essentially rigged to work - see the proof of \cite[Proposition A.2.1]{hinichrect} that we mentioned earlier. We only mention it in detail for convenience, and also to have access to the details in the later parts of the document; 
    \item In \cite{MV}, the authors also deal with the case where the base category $\C$ has no monoidal structure. In that case, one can compare algebras in the category of functors for the pointwise/cartesian monoidal structure  to algebras in the category of cocartesian fibrations for the pullback/cartesian symmetric monoidal structure. We do not know how to incorporate that in our setting in a way which is nontrivial \emph{and} does not rely on $(\infty,2)$-categorical technology which is not easy to access yet. 
\item For the same reason (lack of some needed $(\infty,2)$-categorical technology), we do not provide a statement about the symmetric monoidal $(\infty,2)$-functors $$\Fun(-,\Cat_\infty), \coCart_\bullet: \Cat_\infty\to \widehat{\Cat}_\infty$$ (nor do we construct them as $(\infty,2)$-functors), only their underlying symmetric monoidal $(\infty,1)$-functors. Such an extension would be very interesting, for example it would allow us to say more about naturality of the monoidal un/straightening equivalence in \emph{lax} symmetric monoidal functors, as opposed to strong ones (in the terminology of \cite{MV}, morphisms of pseudomonoids in the $(\infty,2)$-category $\Cat_\infty$, as opposed to morphisms of monoids in the underlying $(\infty,1)$-category). 
\end{itemize}

We now make a small remark, to be expanded upon in \Cref{rmk:content} after all precise definitions have been given. The contravariant functor $\Fun(-,\Cat_\infty)$ can be unstraightened to the cartesian fibration $(\widehat{\Cat_\infty})_{\sslash \Cat_\infty} \to \widehat{\Cat_\infty}$ which is in turn symmetric monoidal via cartesian products and also a coCartesian fibration, and can then be straightened into a covariant lax symmetric monoidal functor also deserving the name ``$\C\mapsto\Fun(\C,\Cat_\infty)$''. For \emph{this} functoriality, \Cref{thm : metacosm} is evident, since one can compare directly the fibrations $(\widehat{\Cat_\infty})_{\sslash \Cat_\infty}$ and $\coCart$, and they automatically have the same cartesian structure. Thus, \Cref{thm : metacosm} is about comparing the two covariant functorialities of $\Fun(-,\Cat_\infty)$: one via the universal property of presheaf categories, and one via ``passing to left adjoints'', in a \emph{symmetric monoidal way}. The comparison of the bare functorialities is already nontrivial, cf. \cite{HHLN2, yoneda}, and \emph{is used} in our proof - we do not obtain an independent argument.

Let us mention a last relation to other work, namely similar results in the case of left fibrations (and correspondingly functors to the $\infty$-category of anima, instead of $\infty$-categories). If we specialize our approach to \Cref{thm : metacosm} to this case, the key points are essentially contained in \cite[Section 6]{haugchu}, the ideas in which can be traced back to \cite{heine}. A version of \Cref{thm  : macrocosm} for anima and left fibrations can also be found stated in the literature, see e.g. \cite[Notation 3.1.2]{HLBrauer}. 

 \subsection*{Philosophy}
 \addcontentsline{toc}{subsection}{Philosophy}
 Before outlining the structure of this paper, we make a short comment about the philosophy relating these three results: viewing \Cref{thm : microcosm} as a special case of \Cref{thm  : macrocosm}, and the latter as a special case of \Cref{thm : metacosm} is an instance of the ``macrocosm principle'', see \cite[Section 2.2]{baezdolan} or \cite{nLabmicro} for a discussion. Here, the idea is that proving an equivalence of symmetric monoidal structures is much stronger than providing an equivalence of categories of algebras, and when the former is possible, it is a good explanation for the latter. Further, it is often actually \emph{simpler} to work in the more general setting because \emph{structures} often become \emph{properties} when one goes up the categorical ladder - this is what happens here as we go from general symmetric monoidal structures to \emph{cartesian} ones.  
 
 In this philosophy, \Cref{thm : microcosm} could be seen as a microcosm level statement, \Cref{thm  : macrocosm} as a macrocosm level statement, and finally \Cref{thm : metacosm} as a metacosm level statement.

 Let us now outline the contents of this paper. 
 
 \subsection*{Outline}
 \addcontentsline{toc}{subsection}{Outline}
In \Cref{section : pullback}, we prove a technical result possibly of independent interest, used in the proof of \Cref{thm : microcosm}.  

In \Cref{section : micro}, we apply the work of the previous section to obtain \Cref{thm : microcosm}. 

\Cref{section : pullback} and \Cref{section : micro} are mostly for background, intuition and possibly motivation. The reader who knows \cite[Proposition A.2.1]{hinichrect} (equivalently, \Cref{thm : microcosm}) can safely skip them. 

In \Cref{section : meta}, we bootstrap the microcosm version of the monoidal Grothendieck construction (\Cref{thm : microcosm}) up to prove the metacosm version (\Cref{thm : metacosm}). 

In \Cref{section : macro}, we deduce from the metacosm version (\Cref{thm : metacosm}) the macrocosm version of the statement (\Cref{thm  : macrocosm}) -- this is where we analyze the monoidal structure on $\coCart_\C$ and describe it concretely.

In \Cref{section : comp}, we study the compatibility of this functorial version with the poinwise version from \Cref{thm : microcosm}.  We are not able to show that they agree, but we outline a possible approach and explain how $(\infty,2)$-categorical technology would help us prove it.

Finally, in \Cref{section : straight}, we prove that coCartesian fibrations over an $\infty$-category $\C$ are closed under colimits in the corresponding slice category. This fact was important in an earlier version of this document, is of independent interest and is not as well-known as it ought to be.

\subsection*{Conventions}
 \addcontentsline{toc}{subsection}{Conventions}
We work in the framework of $\infty$-categories, as largely developed in the books \cite{HTT}, \cite{HA}. The key notational difference with these sources is that we use the word ``anima'' in place of ``space''\footnote{Other commonly used words include ``homotopy type'' and ``$\infty$-groupoid''. }, and let $\Ss$ denote the $\infty$-category{} of anima.

We use the word ``category'' to mean $\infty$-category, and specify $1$-category specifically for (nerves of) ordinary categories (we do not notationally distinguish between an ordinary category and its nerve). Accordingly, all the related notions that we use are to be interpreted in this sense : co/limits, functors, natural transformations, adjunctions etc.\ refer to their $\infty$-categorical version.

We use $\Cat$ to denote the category of small categories (in the above convention, this means the $\infty$-category of small $\infty$-categories), and $\widehat{\Cat}$ to denote the category of possibly large categories -- we will need in several instances to apply results proved for $\Cat$ to $\widehat{\Cat}$ : this can be formalized using the theory of universes, fixing one universe to be the universe of ``small'' sets; because everything we prove does not depend on the chosen universe, it applies one universe up. 

We will say something like ``by going one universe up'' to refer to this technique, and will not go into too many set-theoretic details, as they are not really relevant except in this regard. 

Similarly, we use $\coCart_\C$ to denote the (non-full) subcategory of $\Cat_{/\C}$ on coCartesian fibrations\footnote{Note that $\Cat$ denotes the $\infty$-category of $\infty$-categories, so here we mean coCartesian fibrations between $\infty$-categories rather than between quasicategories - in particular, these are closed under equivalences by design.} and coCartesian-edge-preserving functors between those; and $\widehat{\coCart}_\C$ for coCartesian fibrations from possibly large categories ($\C$ itself being allowed to be possibly large in this case - we will use it for $\C= \Cat$). Our general convention is that if $X\to Y$ is a coCartesian fibration and $f:y_0\to y_1$ is an edge in $Y$, $f_!:X_{y_0}\to X_{y_1}$ denotes the induced functor on fibers. We use $\PrL$ for the category of presentable categories.

We use the usual convention that $\to$ denotes arrows and $\mapsto$ denotes assignment:  we emphasize this here because both expressions such as $\C\to \Fun(\C\op,\Cat)$ and $\C\mapsto~\Fun(\C\op,\Cat)$ will appear, and we do not want the reader to get confused.

We use the definition of operads from \cite{HA} - we let $\Oo$ denote an operad, and $\Oo^\otimes$ denote the total category of the corresponding fibration $\Oo^\otimes\to \Fin_*$. We use $\Oo^\otimes_{\langle n\rangle}$ to denote the fiber over $\langle n\rangle$ of this fibration, and call $\Oo^\otimes_{\langle 1\rangle}$ the category of colours of $\Oo$.

We will say \emph{$\Oo$-monoidal functor} to refer to morphisms of $\Oo$-monoidal categories, and lax $\Oo$-monoidal functor for morphisms of their underlying $\Oo$-operads. If we want to stress that a morphism of $\Oo$-operads is a morphism of $\Oo$-monoidal categories, we will say that it is \emph{strong} $\Oo$-monoidal, but this means the same thing as $\Oo$-monoidal. When $\Oo=\mathrm{Comm}$ is the commutative operad, we will use the word (lax, strong) symmetric monoidal in place of (lax, strong) $\Oo$-monoidal 
\subsection*{Acknowledgements}
 \addcontentsline{toc}{subsection}{Acknowledgements}
I would like to thank Pierre Elis for asking the question from which this work originated and Shaul Barkan, Dustin Clausen, Bastiaan Cnossen for helpful comments, and especially to Rune Haugseng for many comments on an earlier draft. 

I would also like to thank my advisors, Jesper Grodal and Markus Land, for their constant support and interest in my projects. 

Finally, thanks to Fabian Hebestreit and to the anonymous referee for helpful comments that helped improve the exposition of the paper. 

This research was supported by the Danish National Research Foundation through the Copenhagen Centre for Geometry and Topology (DNRF151). A portion of this work was completed while the author was in residence at the Institut Mittag-Leffler in Djursholm, Sweden in 2022 as part of the program ‘Higher algebraic structures in algebra, topology and geometry’ supported by the Swedish Research Council under grant no. 2016-06596. 
\section{Pullbacks of monoidal structures}\label{section : pullback}
This section is essentially here for motivation, and for recollections - we claim no originality for any of the material here.

We will use the following notion: 
\begin{defn}
For an operad $\Oo$ and an $\Oo$-monoidal category $\D$, we call \emph{$\Oo$-operations}  the pushforwards of the form $\D^\otimes_X\xrightarrow{e_!}  \D^\otimes_Y$ for arrows $e : X\to Y$ in $\Oo^\otimes$. 
\pend\end{defn}
\begin{rmk}
For a general $Y\in \Oo^\otimes$, we can split it as $Y_1\oplus...\oplus Y_k$, with $Y_i\in \Oo^\otimes_{\langle 1\rangle}$ (using the notation from \cite[Remark 2.1.1.15]{HA}), and similarly split $X$, so that pushforwards along maps of the form $e : X\to Y$ can be written as products of pushforwards of the form $X_{j_1}\oplus ... \oplus X_{j_i})\to Y_i$. In particular, questions about these general pushforwards can often be reduced to the special case where $Y\in \Oo^\otimes_{\langle 1\rangle}$. 

For the same reason, they can often be reduced to the case of maps $X\to Y$ that are \emph{active}. 
\pend\end{rmk}
\begin{ex}\label{ex : commop}
When $\Oo= \mathrm{Comm}$, the $\Oo$-operation corresponding to the active morphism $\langle n\rangle \to \langle 1\rangle$ is exactly the tensor product $\D^n \simeq \D^\otimes_{\langle n \rangle}\to \D$. 
\pend\end{ex}

We wish to prove the following: 

\begin{prop}\label{prop : pullback}
Let $\Oo$ be an operad, $\C^\otimes,\D^\otimes,\E^\otimes$ be  $\Oo$-monoidal categories, and consider a diagram: $$\xymatrix{& \D^\otimes\ar[d] \\ \C^\otimes \ar[r] & \E^\otimes}$$
of operads over $\Oo$.
Suppose further that $\D^\otimes\to \E^\otimes$ is a morphism of $\Oo$-monoidal categories and that for every object $T\in \Oo^\otimes_{\langle 1\rangle}$, the induced map on fibers $\D^\otimes_T\to~ \E^\otimes_T$ is a coCartesian fibration, and that the coCartesian edges for these fibrations are preserved by the $\Oo$-operations.

In this situation, the pullback $\D^\otimes\times_{\E^\otimes}\C^\otimes \to \Oo^\otimes$ is an $\Oo$-monoidal category, and the morphism $\D^\otimes\times_{\E^\otimes}\C^\otimes \to \C^\otimes$ is a morphism of $\Oo$-monoidal categories. 

\end{prop}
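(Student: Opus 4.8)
The plan is to verify the two defining conditions for $\D^\otimes\times_{\E^\otimes}\C^\otimes \to \Oo^\otimes$ to be an $\Oo$-monoidal category: namely that it is a coCartesian fibration over $\Oo^\otimes$, and that for each $\langle n\rangle$ the induced functor on fibers exhibits the fiber over $\langle n\rangle$ as a product of the fibers over the $\langle 1\rangle$-summands (the Segal condition). Since the three given functors are all morphisms over $\Oo^\otimes$ and $\C^\otimes$, $\E^\otimes$ are $\Oo$-monoidal, the Segal condition for the pullback will follow formally from the Segal conditions for $\C^\otimes$, $\D^\otimes$, $\E^\otimes$ together with the fact that pullbacks commute with products; so the real work is in the coCartesian fibration statement.

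\textbf{Identifying coCartesian edges.} First I would set up the pullback $\cat P^\otimes := \D^\otimes\times_{\E^\otimes}\C^\otimes$ and unwind what a morphism in it is: an edge is a triple consisting of an edge $d\to d'$ in $\D^\otimes$, an edge $c\to c'$ in $\C^\otimes$, and a homotopy identifying their images in $\E^\otimes$. To prove $\cat P^\otimes\to\Oo^\otimes$ is a coCartesian fibration, I would reduce, via the remark after the definition of $\Oo$-operations, to constructing coCartesian lifts for maps in $\Oo^\otimes$ and reduce further to active maps and to the case where the target lies in $\Oo^\otimes_{\langle 1\rangle}$. The candidate coCartesian lift of an edge $e : X\to Y$ in $\Oo^\otimes$ starting at $(d,c)$ should be the pair $(e_!^\C c,\ \widetilde{e})$ where $e_!^\C c$ is the coCartesian pushforward in $\C^\otimes$ and $\widetilde e$ is a suitable lift in $\D^\otimes$ over the pushforward of the image of $c$ in $\E^\otimes$. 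The key point is that because $\D^\otimes_T\to\E^\otimes_T$ is a coCartesian fibration on each fiber, one can factor the desired lift in $\D^\otimes$ into a $\Oo$-operation piece followed by a fiberwise-coCartesian piece filling in the discrepancy with the chosen lift in $\C^\otimes$.

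\textbf{The main obstacle.} The hard part will be checking that this candidate edge is genuinely coCartesian in the \emph{total} category $\cat P^\otimes$, not merely fiberwise. This is exactly where the hypothesis that \emph{the coCartesian edges of $\D^\otimes_T\to\E^\otimes_T$ are preserved by the $\Oo$-operations} is indispensable: one needs to commute a pushforward $e_!$ past a fiberwise-coCartesian edge and know the result is still fiberwise-coCartesian, so that the two factors recombine compatibly across the pullback. Concretely, testing the universal property of $\cat P^\otimes$ against an arbitrary competing edge produces, via the mapping-space fiber sequences of $\D^\otimes\to\E^\otimes$ and $\C^\otimes\to\Oo^\otimes$, a comparison map of mapping spaces; I would show it is an equivalence by combining the coCartesian lifting property of $\C^\otimes\to\Oo^\otimes$ with the relative coCartesian property encoded in the preservation hypothesis, using that $\D^\otimes\to\E^\otimes$ is a strong $\Oo$-monoidal functor to control the interaction on the nose.

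\textbf{Concluding the morphism statement.} Once $\cat P^\otimes\to\Oo^\otimes$ is known to be $\Oo$-monoidal, it remains to see that the projection $\cat P^\otimes\to\C^\otimes$ is $\Oo$-monoidal, i.e.\ that it sends coCartesian edges over $\Oo^\otimes$ to coCartesian edges. But by construction the coCartesian lift in $\cat P^\otimes$ was built so that its $\C^\otimes$-component is precisely $e_!^\C c$, the coCartesian lift in $\C^\otimes$; so the projection preserves coCartesian edges essentially by design, and this step is formal. I would finish by noting that the whole argument is manifestly stable under the product decomposition $Y\simeq Y_1\oplus\cdots\oplus Y_k$, which justifies the reductions made at the outset.
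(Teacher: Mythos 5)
Your overall strategy (build the lift as the $\C$-pushforward paired with an ``operation pushforward followed by a fiberwise coCartesian correction'' in $\D^\otimes$, then verify its universal property) is exactly the naive construction the paper sketches as intuition before its proof, and it could in principle be completed; but as written there is a genuine gap precisely where you place the ``main obstacle''. The passage beginning ``Concretely, testing the universal property\dots'' is not an argument: it names the hypotheses and asserts that combining them yields the required equivalence of mapping spaces. That verification \emph{is} the technical content of the whole statement. The paper never does it by hand; it delegates it to a gluing lemma (\cite[Lemma A.1.8.]{haugseng2022shifted}, quoted as lemma \ref{lm : cartfiber}), which says that a morphism of coCartesian fibrations over a base which is fiberwise a coCartesian fibration, with pushforwards preserving fiberwise coCartesian edges, is itself a coCartesian fibration; this is applied in lemma \ref{lm : glob} to conclude that $\D^\otimes\to\E^\otimes$ is a coCartesian fibration \emph{globally}. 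Showing that your composite candidate edge is coCartesian in the total category is equivalent to (the hard direction of) that lemma, so unless you prove it or cite it, your proof has a hole at its center.

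You also miss the structural shortcut that makes everything after that point formal and avoids any direct lifting over $\Oo^\otimes$: once $\D^\otimes\to\E^\otimes$ is known to be a coCartesian fibration, its pullback $\cat P^\otimes\to\C^\otimes$ is one, hence the composite $\cat P^\otimes\to\C^\otimes\to\Oo^\otimes$ is a composite of coCartesian fibrations and therefore coCartesian, with no universal property left to check. The identification of the coCartesian edges --- which you need both for your ``recombination'' step and for the claim that the projection to $\C^\otimes$ is strong $\Oo$-monoidal --- then follows from \cite[Proposition 2.4.1.3. (3)]{HTT}: edges coCartesian for $\cat P^\otimes\to\C^\otimes$ lying over coCartesian edges of $\C^\otimes\to\Oo^\otimes$ are coCartesian over $\Oo^\otimes$, and since there is a sufficient supply of these they exhaust all coCartesian edges; this is the paper's lemma \ref{lm : fibs}. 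Finally, your claim that the Segal condition is ``formal'' from pullbacks commuting with products suppresses one needed input: the map $\C^\otimes\to\E^\otimes$, although only lax, does preserve coCartesian lifts of \emph{inert} edges, and this is what makes the comparison $\cat P^\otimes_T\to\prod_i\cat P^\otimes_{T_i}$ compatible with the projections and identifies it with a pullback of the corresponding equivalences for $\C^\otimes$, $\D^\otimes$, $\E^\otimes$.
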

\begin{rmk}\label{rmk : pullbackopd}
Note that in general, pullbacks are computed this way in the category of operads, that is, as ordinary pullbacks of the total categories - this will become apparent in the proof of \Cref{prop : pullback}. Here we are saying that this pullback in the category of operads over $\Oo$ is an $\Oo$-monoidal category, and that one of the functors is strong $\Oo$-monoidal.

In particular, the map from the pullback to $\D^\otimes$ is a morphism of $\Oo$-operads, i.e. a lax $\Oo$-monoidal functor of $\Oo$-monoidal categories.
\pend\end{rmk}
\begin{rmk}\label{rmk : spelloutoop}
In the hypothesis that ``coCartesian edges are preserved by the $\Oo$-operations'', we abuse terminology by identifying $\D^\otimes_X$ with $\prod_{i=1}^n\D^\otimes_{X_i}$, where $X= X_1\oplus ... \oplus X_n$ and $X_i\in \Oo^\otimes_{\langle 1\rangle}$. So this hypothesis states that if $X\to Y$ is a morphism in $\Oo^\otimes$, the corresponding $\Oo$-operation $\prod_{i=1}^n\D^\otimes_{X_i} \simeq \D^\otimes_X\to \D^\otimes_Y\simeq \prod_{j=1}^m \D^\otimes_{Y_j}$ preserves coCartesian edges. 

It is easy to see that it suffices to have this assumption when $m=1$ and $X\to Y$ is active. 
\pend\end{rmk}
Let us state the special case where $\Oo=\mathrm{Comm}$ for motivation and intuition:
\begin{cor}\label{cor : symmon}
Let $\C^\otimes,\D^\otimes,\E^\otimes$ be symmetric monoidal categories, and consider a diagram: $$\xymatrix{& \D^\otimes\ar[d] \\ \C^\otimes \ar[r] & \E^\otimes}$$
of lax symmetric monoidal functors.
Suppose further that $\D^\otimes\to \E^\otimes$ is \emph{strong} symmetric monoidal and that the underlying functor  $\D\to \E$ is a coCartesian fibration, in which the coCartesian edges are preserved  under tensor products.

In this situation, the pullback $\D^\otimes\times_{\E^\otimes}\C^\otimes$ is a symmetric monoidal category, and the morphism $\D^\otimes\times_{\E^\otimes}\C^\otimes \to \C^\otimes$ is a \emph{strong} symmetric monoidal functor. 
\end{cor}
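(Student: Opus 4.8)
The plan is to deduce this corollary as the special case $\Oo = \mathrm{Comm}$ of Proposition \ref{prop : pullback}; essentially all of the content is already contained there, so the real task is to match the hypotheses of the two statements through the standard dictionary between symmetric monoidal categories and $\mathrm{Comm}$-monoidal categories. Recall that $\mathrm{Comm}^\otimes = \Fin_*$ with structure map the identity, so that a $\mathrm{Comm}$-monoidal category is exactly a symmetric monoidal category, a morphism of operads over $\mathrm{Comm}^\otimes$ is exactly a lax symmetric monoidal functor, and a morphism of $\mathrm{Comm}$-monoidal categories is exactly a strong symmetric monoidal functor. Under this dictionary the diagram of lax symmetric monoidal functors in the corollary is precisely a diagram of operads over $\Oo^\otimes = \mathrm{Comm}^\otimes$ in which $\D^\otimes \to \E^\otimes$ is a morphism of $\Oo$-monoidal categories, as Proposition \ref{prop : pullback} requires.

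It then remains to translate the two fibrational hypotheses. First, the underlying category $\Oo^\otimes_{\langle 1\rangle} = \mathrm{Comm}$ of the commutative operad is contractible, so there is (up to equivalence) a single object $T$, and the fiber $\D^\otimes_T$ is the fiber of $\D^\otimes \to \Fin_*$ over $\langle 1 \rangle$, namely the underlying category $\D$; likewise $\E^\otimes_T \simeq \E$. Hence the requirement that $\D^\otimes_T \to \E^\otimes_T$ be a coCartesian fibration for every $T$ is exactly the hypothesis that the underlying functor $\D \to \E$ be a coCartesian fibration.

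Second, for the preservation-of-coCartesian-edges hypothesis I would invoke Remark \ref{rmk : spelloutoop}, which reduces the condition to active maps $X \to Y$ with $Y \in \Oo^\otimes_{\langle 1\rangle}$; by Example \ref{ex : commop} such an $\Oo$-operation is precisely an $n$-fold tensor product $\D^n \simeq \D^\otimes_{\langle n\rangle} \to \D$. Thus the hypothesis that coCartesian edges be preserved by all $\Oo$-operations becomes exactly the stated condition that coCartesian edges of $\D \to \E$ be preserved under tensor products. With every hypothesis of Proposition \ref{prop : pullback} verified, its two conclusions specialize verbatim: the pullback $\D^\otimes \times_{\E^\otimes} \C^\otimes$ is a symmetric monoidal category, and the projection to $\C^\otimes$ is strong symmetric monoidal. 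Since the proposition carries the entire argument, there is no substantive obstacle here; the only point demanding any care — and hence the sole, mild difficulty — is this last translation, where one must be certain that checking preservation on tensor products alone recovers preservation under \emph{all} $\Oo$-operations. That is precisely what the reduction to active maps into $\langle 1\rangle$ in Remark \ref{rmk : spelloutoop} guarantees, so no further verification is needed.
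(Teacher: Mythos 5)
Your proof is correct and follows exactly the paper's (implicit) argument: the corollary is stated there as the special case $\Oo=\mathrm{Comm}$ of Proposition \ref{prop : pullback}, with the hypothesis translation handled — just as you do — by Example \ref{ex : commop} and Remark \ref{rmk : spelloutoop}, which together show that preservation of coCartesian edges under all $\Oo$-operations reduces to preservation under tensor products. Your additional observation that $\mathrm{Comm}^\otimes_{\langle 1\rangle}$ has a single object, so the fiberwise coCartesian condition is just the condition on the underlying functor $\D\to\E$, is the same identification the paper takes for granted.
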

\begin{rmk}
In this special case, by \Cref{ex : commop} and \Cref{rmk : spelloutoop}, the assumption is that tensor products of coCartesian edges in $\D$ remain coCartesian. 
\pend\end{rmk}
Let us make a small observation: in the setting of \Cref{cor : symmon}, if the functor $\C\to~\D$ were also symmetric monoidal, then the result would be simpler. Indeed, symmetric monoidal categories are equivalent to commutative monoids in $\Cat$, and the forgetful functor $\CMon(\Cat)\to~\Cat$ preserves and creates all limits. Expanding on this argument also yields a uniqueness statement in this case.

The interesting aspect of this statement is the fact that $p:\C\to\E$ is allowed to be merely lax symmetric monoidal. It is this relaxation that forces us to impose further conditions on $q:\D\to \E$. 

Let us now give an intuition for this special case : for this, we deal with everything in a naive way. Suppose we want to define a tensor product on $\C\times_\E\D$. A natural guess is to put $(c_0,e_0)\otimes (c_1,e_1) := (c_0\otimes c_1, e_0\otimes e_1)$. This does not work because this may no longer ``be in the pullback'' : indeed $q(e_0\otimes e_1) \simeq q(e_0)\otimes q(e_1)\simeq p(c_0)\otimes p(e_1)$, but this need not be equivalent to $p(c_0\otimes c_1)$ as $p$ is only lax symmetric monoidal. 

To remedy this, we observe that we nonetheless have a map $$q(e_0\otimes e_1) \simeq q(e_0)\otimes q(e_1)\simeq p(c_0)\otimes p(c_1)\to p(c_0\otimes c_1).$$ The coCartesian-ness assumption allows us to lift this to a map $e_0\otimes e_1\to \tilde e$ with $q(\tilde e)=~p(c_0\otimes~c_1)$. In particular, we may put $(c_0,e_0)\otimes (c_1,e_1) := (c_0\otimes c_1, \tilde e)$. 

When one tries to prove that this construction ``is'' associative, one will need to use that coCartesian edges are closed under tensor products. 

We now move on to the actual proof. We start by recalling a general lemma that allows one to ``glue'' coCartesian fibrations (this will be useful again later in the paper):

\begin{lm}{\cite[Lemma A.1.8]{haugseng2022shifted}}\label{lm : cartfiber}
Let $X,Y,S$ be categories and $p: X\to Y$ be a morphism of coCartesian fibrations over $S$, and suppose for each $s\in S$, the induced morphism on fibers $p_s : X_s\to Y_s$ is a coCartesian fibration, and that for each $e: s\to t\in S$, $e_! : X_s\to X_t$ sends $p_s$-coCartesian morphisms to $p_t$-coCartesian morphisms.  

Then $p$ is a coCartesian fibration, and $p_s$-coCartesian edges in $X_s$ are mapped to $p$-coCartesian edges in $X$ along $X_s\to X$. 
\end{lm}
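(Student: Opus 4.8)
The plan is to verify directly that $p\colon X\to Y$ is a coCartesian fibration by exhibiting coCartesian lifts and checking the mapping-space criterion, reducing everything to the fibres over $S$. Throughout I use that a morphism of coCartesian fibrations over $S$ preserves coCartesian edges, so that $p$ commutes with the pushforward functors, i.e.\ $p(e_!z)\simeq e_!(p z)$ for every edge $e$ of $S$ and every $z$ over its source.

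First I would construct the candidate lift. Fix $x\in X_s$ and an edge $\psi\colon p(x)\to y'$ in $Y$ lying over $e\colon s\to t$ in $S$. Using that $X\to S$ is coCartesian, take the coCartesian lift $\bar\phi\colon x\to e_!x$ of $e$; applying $p$ and invoking edge-preservation identifies $p(\bar\phi)$ with the coCartesian lift of $e$ at $p(x)$, so $\psi$ factors as $p(x)\to e_!p(x)=p(e_!x)\xrightarrow{\psi_0} y'$ with $\psi_0$ an edge of the fibre $Y_t$. Since $p_t\colon X_t\to Y_t$ is a coCartesian fibration, take a $p_t$-coCartesian lift $\phi_0\colon e_!x\to x'$ of $\psi_0$, and set $\phi:=\phi_0\circ\bar\phi$. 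Then $p(\phi)\simeq\psi$, so $\phi$ is a lift of $\psi$, and the final clause of the statement will fall out as the special case $e=\id_s$, where $\bar\phi$ is invertible and $\phi\simeq\phi_0$ is a fibre-coCartesian edge included into $X$.

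The heart of the argument is verifying that $\phi$ is $p$-coCartesian, i.e.\ that for every $x''\in X_u$ the square relating $\Map_X(x',x'')\to\Map_X(x,x'')$ to $\Map_Y(p x',p x'')\to\Map_Y(p x,p x'')$ is cartesian. I would do this by noting that all four corners map compatibly to $\Map_S(s,u)$ (the two left corners via postcomposition with $e^*\colon \Map_S(t,u)\to\Map_S(s,u)$), and that a square of spaces over a fixed base is cartesian as soon as it is so on the fibre over each point. Over a fixed $g\colon s\to u$, the coCartesian description of mapping spaces in $X\to S$ and $Y\to S$ identifies the right column with $\Map_{X_u}(g_!x,x'')\to\Map_{Y_u}(g_!p x,p x'')$ and the left column with $\coprod_h$ of $\Map_{X_u}(h_!x',x'')\to\Map_{Y_u}(h_!p x',p x'')$, indexed by the $h\colon t\to u$ with $he\simeq g$, the horizontal maps being precomposition with $h_!\phi_0\colon g_!x\to h_!x'$ and its image under $p$. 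Since the coproduct distributes over the pullback along the index-independent right column, it suffices to treat each $h$ separately, and there the required cartesianness is exactly the assertion that $h_!\phi_0$ is $p_u$-coCartesian. This is where the last hypothesis enters: $h_!\colon X_t\to X_u$ carries the $p_t$-coCartesian edge $\phi_0$ to a $p_u$-coCartesian edge, which is precisely the mapping-space square we need.

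The main obstacle is this fibrewise bookkeeping, namely identifying the two columns as coproducts over $\{h : he\simeq g\}$ and checking that the comparison maps are induced by the pushforwards $h_!\phi_0$; the remaining point is upgrading ``coCartesian lifts exist'' to ``$p$ is a coCartesian fibration''. For the latter I would observe that $p$ is an inner fibration: given an inner horn in $X$ over an $n$-simplex of $Y$, first lift it using that $X\to S$ is an inner fibration, then correct the filler using that $Y\to S$ is an inner fibration; alternatively, in the model-independent framework the existence and stability under composition of coCartesian lifts already characterise coCartesian fibrations, so no separate verification is required.
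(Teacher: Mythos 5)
The paper does not actually prove this lemma: it is imported verbatim from \cite[Lemma A.1.8]{haugseng2022shifted}, so there is no internal proof to compare against. Your argument follows the same strategy as the cited source: build the candidate lift as a fibrewise $p_t$-coCartesian edge composed with a base-coCartesian edge, then verify the mapping-space criterion by fibring everything over mapping spaces of $S$, where the hypothesis that $h_!$ carries $p_t$-coCartesian edges to $p_u$-coCartesian ones is exactly what makes the fibre squares cartesian. The core of the verification is right, including deducing the final clause as the case $e=\id_s$ (note the paper's statement has a typo there --- the edges land in $X$, not $Y$ --- which you implicitly and correctly fixed).

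Two points need repair, neither fatal. First, the fibre of $\Map_S(t,u)\xrightarrow{-\circ e}\Map_S(s,u)$ over a point $g$ is a \emph{space} $F_g$, not a set, so the left-hand column over $g$ is not a coproduct indexed by $\{h : he\simeq g\}$ but a colimit indexed by $F_g$ (a point of which is an $h$ \emph{together with} a homotopy $he\simeq g$, and $F_g$ may have higher homotopy). Your argument survives: by universality of colimits in $\Ss$, pullback along the $h$-independent right-hand column commutes with this colimit, and a colimit of equivalences is an equivalence. More cleanly, compare the top-left corner with the pullback $P$ of the other three corners directly over $\Map_S(t,u)$ (both map there, $P$ via its $\Map_Y(px',px'')$ factor) and check fibres over each point $h\in\Map_S(t,u)$; this avoids $F_g$ altogether and gives exactly your fibre squares. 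Second, your quasicategorical sketch that $p$ is an inner fibration does not work as stated: filling the horn against $X\to S$ produces a simplex lying over the correct simplex of $S$ but not over the prescribed simplex of $Y$, and ``correcting'' it is not itself a horn-filling problem. Your alternative disposal is the right one: in the invariant setting in which this paper works, a functor is a coCartesian fibration as soon as every arrow out of a point in the image admits a coCartesian lift, coCartesianness being the mapping-space condition you verify, so no separate inner-fibration check is needed. Finally, the identification of the horizontal fibre maps with precomposition by $h_!\phi_0$ --- which you rightly flag as the main bookkeeping --- rests on $p$ commuting with pushforwards and with the equivalences $\Map_X(x,x'')_g\simeq\Map_{X_u}(g_!x,x'')$, both consequences of $p$ preserving coCartesian edges over $S$, as you note at the outset.
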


We use this to prove the following well-known lemma : 
\begin{lm}\label{lm : glob}
Let $\Oo$ be an operad, $p^\otimes: \D^\otimes\to \Oo^\otimes$ and $q^\otimes: \E^\otimes\to \Oo^\otimes$ be $\Oo$-monoidal categories, and $\pi^\otimes : \D^\otimes\to \E^\otimes$ a morphism of $\Oo$-monoidal categories. 

Suppose that for any object $T\in\Oo^\otimes_{\langle 1\rangle}$, the induced map on fibers $\D^\otimes_T\to \E^\otimes_T$ is a coCartesian fibration, and suppose that coCartesian edges are preserved by the $\Oo$-operations.

Then the functor $\pi^\otimes$ is a coCartesian fibration. 
\end{lm}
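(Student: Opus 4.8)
The plan is to deduce the statement directly from Lemma~\ref{lm : cartfiber}, applied with $X = \D^\otimes$, $Y = \E^\otimes$, $S = \Oo^\otimes$ and $p = \pi^\otimes$. The three hypotheses of that lemma translate into three facts that I would verify in turn. First, that $\pi^\otimes$ is a morphism of coCartesian fibrations over $\Oo^\otimes$: this is immediate, since $\D^\otimes \to \Oo^\otimes$ and $\E^\otimes \to \Oo^\otimes$ are coCartesian fibrations by the definition of an $\Oo$-monoidal category, and $\pi^\otimes$ preserves coCartesian edges over $\Oo^\otimes$ because it is (strong) $\Oo$-monoidal.

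Second, that for every object $s \in \Oo^\otimes$ the map on fibers $\D^\otimes_s \to \E^\otimes_s$ is a coCartesian fibration. The hypothesis only supplies this over $\langle 1\rangle$, so here I would invoke the Segal condition: if $s$ lies over $\langle n\rangle$, the inert morphisms $s \to s_i$ (with $s_i \in \Oo^\otimes_{\langle 1\rangle}$) induce an equivalence $\D^\otimes_s \simeq \prod_{i=1}^n \D^\otimes_{s_i}$, and likewise for $\E^\otimes$. Since $\pi^\otimes$ is $\Oo$-monoidal it commutes with these inert pushforwards, so under these equivalences the map $\D^\otimes_s \to \E^\otimes_s$ is identified with the product $\prod_{i=1}^n (\D^\otimes_{s_i} \to \E^\otimes_{s_i})$. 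Each factor is a coCartesian fibration by hypothesis, and a finite product of coCartesian fibrations is a coCartesian fibration, which settles this point; incidentally this also pins down, via Remark~\ref{rmk : spelloutoop}, that the fiberwise-coCartesian edges of $\D^\otimes_s \to \E^\otimes_s$ are exactly those that are coCartesian in each factor.

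Third, and this is the step carrying the actual content of the hypotheses, that for each edge $e : s \to t$ of $\Oo^\otimes$ the pushforward $e_! : \D^\otimes_s \to \D^\otimes_t$ sends $\pi_s$-coCartesian edges to $\pi_t$-coCartesian edges. This is precisely the assumption that coCartesian edges are preserved by the $\Oo$-operations. To see that the assumption in the form stated (for $m=1$ and $e$ active, per Remark~\ref{rmk : spelloutoop}) suffices for an arbitrary $e$, I would use the inert--active factorization $e = e_{\mathrm{act}} \circ e_{\mathrm{in}}$ together with the compatibility $e_! \simeq (e_{\mathrm{act}})_! \circ (e_{\mathrm{in}})_!$ of pushforwards along composites. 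The inert part $(e_{\mathrm{in}})_!$ is, under the product decomposition above, a projection onto a subcollection of factors, hence sends fiberwise-coCartesian edges to fiberwise-coCartesian edges; and the active part decomposes over the $\langle 1\rangle$-components of $t$ into the $\Oo$-operations covered by the hypothesis. Composing these observations gives the claim for general $e$, and Lemma~\ref{lm : cartfiber} then yields that $\D^\otimes \to \E^\otimes$ is a coCartesian fibration.

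The main obstacle here is bookkeeping rather than anything conceptual: one must be careful that the product decompositions of the fibers are natural in $s$ and compatible with $\pi^\otimes$, so that ``coCartesian in each factor'' is an unambiguous and stable notion, and that the reduction of general pushforwards to inert projections and active $\Oo$-operations is exactly the one encoded in Remark~\ref{rmk : spelloutoop}.
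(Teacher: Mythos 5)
Your proposal is correct and follows essentially the same route as the paper's proof: both reduce to Lemma~\ref{lm : cartfiber} with $S=\Oo^\otimes$, extend the fiberwise coCartesian-fibration hypothesis from $\Oo^\otimes_{\langle 1\rangle}$ to all objects via the Segal decompositions and the fact that products of coCartesian fibrations are coCartesian, and identify the hypothesis on $\Oo$-operations with the pushforward condition of that lemma. Your third step merely spells out the inert--active reduction that the paper delegates to Remark~\ref{rmk : spelloutoop}, which is a fine (and slightly more self-contained) way to present the same argument.
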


\begin{proof}
We first observe that for any object $T\in \Oo^\otimes$, the induced morphism on fibers $\D^\otimes_T\to~\E^\otimes_T$ is a coCartesian fibration. Indeed, if $T\in \Oo^\otimes_{\langle 1\rangle}$, this is our assumption. Now, if $T\in \Oo^\otimes_{\langle n\rangle }$, then using the notation from \cite[Remark 2.1.1.15]{HA}, we can write $T= T_1\oplus ... \oplus T_n$ with $T_i\in \Oo^\otimes_{\langle 1\rangle}$, and by the definition of $\Oo$-monoidal category,  we have $\D^\otimes_T\simeq \prod_{i=1}^n \D^\otimes_{T_i}$, and similarly for $\E^\otimes$, in a compatible way. Because products of coCartesian fibrations are coCartesian, this proves the claim.

We are now in the situation of \Cref{lm : cartfiber}:  the assumption that $\Oo$-operations preserve coCartesian edges is exactly the assumption that $e_! : X_s\to X_t$ sends $p_s$-coCartesian edges to $p_t$-coCartesian edges (cf. \Cref{rmk : spelloutoop}), in the notation of that lemma. 
\end{proof}
\begin{defn}\label{defn : monfib}
We call such a functor $\D^\otimes\to \E^\otimes$, or abusively $\D\to \E$, an $\Oo$-monoidally coCartesian fibration; and we let $\coCart_\E^\Oo$ denote the category of $\Oo$-monoidally coCartesian fibrations over $\E$ : this is the full subcategory of $\coCart_{\E^\otimes}$ spanned by such $\D^\otimes$.\footnote{These are exactly $\E^\otimes$-monoidal categories, but we want to emphasize the focus on $\E$ as an $\Oo$-monoidal category here.}
\pend\end{defn}
We now have a second general lemma about coCartesian fibrations: 
\begin{lm}\label{lm : fibs}
Consider a diagram:
\[\begin{tikzcd}
	\C & \E & \D \\
	& S
	\arrow["p"', from=1-1, to=2-2]
	\arrow["f", from=1-1, to=1-2]
	\arrow["g"', from=1-3, to=1-2]
	\arrow["q", from=1-3, to=2-2]
	\arrow["r"{description}, from=1-2, to=2-2]
\end{tikzcd}\]
of categories, where $p: \C\to S, q: \D\to S, r: \E\to S$ are coCartesian fibrations, $f:~\C\to~\E, g: \D\to \E$ morphisms over $S$, and assume that $g$ sends $q$-coCartesian morphisms to $r$-coCartesian ones, and is a coCartesian fibration. 

Then $\pi : \C\times_\E\D\to S$ is a coCartesian fibration, and the projection $\C\times_\E\D\to \C$ preserves coCartesian edges. 
\end{lm}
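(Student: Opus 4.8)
The key structural observation is that the first projection $\C\times_\E\D \to \C$ is simply the base change of $g\colon \D\to\E$ along $f\colon\C\to\E$, since the defining square
\[
\begin{tikzcd}
\C\times_\E \D \ar[r] \ar[d, "\on{pr}_\C"'] & \D \ar[d, "g"] \\
\C \ar[r, "f"'] & \E
\end{tikzcd}
\]
is cartesian. My plan is therefore to realize $\pi$ as the composite of two coCartesian fibrations, $\C\times_\E\D \xrightarrow{\on{pr}_\C} \C \xrightarrow{p} S$, and to read off its coCartesian edges from those of the two factors.

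First I would record that $\on{pr}_\C$ is a coCartesian fibration: the base change of a coCartesian fibration along an arbitrary functor is again one, and an edge of $\C\times_\E\D$ is $\on{pr}_\C$-coCartesian precisely when its image in $\D$ is $g$-coCartesian (these are standard stability properties of coCartesian fibrations, see \cite{HTT}). Since $p$ is a coCartesian fibration by hypothesis, $\pi = p\circ\on{pr}_\C$ is a composite of coCartesian fibrations, and the composite of coCartesian fibrations is again a coCartesian fibration. To exhibit the lifts concretely, given $(c,d)$ over $s\in S$ and an edge $\bar\alpha: s\to s'$, I would take a $p$-coCartesian lift $\alpha: c\to c'$ of $\bar\alpha$, then push $f(\alpha)$ forward through the coCartesian fibration $g$ to a $g$-coCartesian edge $\gamma: d\to d'$ lying over $f(\alpha)$; the pair $(\alpha,\gamma)$ is a well-defined edge of the pullback because $g(\gamma)=f(\alpha)$. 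By the compatibility of coCartesian edges with composition (\cite{HTT}), an edge that is $\on{pr}_\C$-coCartesian and whose image in $\C$ is $p$-coCartesian is automatically $\pi$-coCartesian; as $(\alpha,\gamma)$ has $g$-coCartesian $\D$-component $\gamma$ (hence is $\on{pr}_\C$-coCartesian) and projects to the $p$-coCartesian edge $\alpha$, it is the sought-after $\pi$-coCartesian lift. Together with the fact that $\pi$ is an inner fibration, this establishes the first assertion.

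For the second assertion, I would invoke uniqueness of coCartesian lifts. The lift just built has $\C$-projection $\alpha$, which is $p$-coCartesian; since any $\pi$-coCartesian lift of $\bar\alpha$ at $(c,d)$ is equivalent to this one, and $p$-coCartesian edges are stable under equivalence, the $\C$-projection of an arbitrary $\pi$-coCartesian edge is $p$-coCartesian. Hence the projection $\C\times_\E\D\to\C$ preserves coCartesian edges. The step I expect to require the most care is purely bookkeeping: correctly transporting the characterizations of coCartesian edges across both the pullback and the composite; once $\on{pr}_\C$ is recognized as a base change of $g$, everything else is formal. I also note that the active ingredients are precisely that $p$ and $g$ are coCartesian fibrations; the hypotheses that $q,r$ are coCartesian and that $g$ carries $q$-coCartesian edges to $r$-coCartesian edges do not seem to be needed for the two conclusions as stated, and presumably reflect the setting in which the lemma is subsequently applied.
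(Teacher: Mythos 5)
Your proof is correct and follows essentially the same route as the paper's: recognize the projection to $\C$ as a base change of $g$ (hence a coCartesian fibration), compose with the coCartesian fibration $p$, and identify the $\pi$-coCartesian edges via \cite[Proposition 2.4.1.3 (3)]{HTT} together with uniqueness of coCartesian lifts, from which preservation of coCartesian edges by the projection follows. Your closing observation is also accurate: the paper's proof likewise uses only that $p$ and $g$ are coCartesian fibrations, the remaining hypotheses being present for the situation in which the lemma is later applied.
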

\begin{proof}
First note that $\mathrm{pr}:\C\times_\E\D\to \C$ is a coCartesian fibration, as it is pulled back from one. Therefore, the composite $\C\times_\E\D\to \C\to S$ is also a coCartesian fibration.

The claim about coCartesian edges follows from \cite[Proposition 2.4.1.3. (3)]{HTT} : indeed, edges that are $\mathrm{pr}$-coCartesian lying over edges in $\C$ that are $p$-coCartesian are themselves $\pi$-coCartesian, and because $\C\times_\E\D\to \C$ and $\C\to S$ are coCartesian fibrations, there is a sufficient supply of those, hence they are the only $\pi$-coCartesian edges in $\C\times_\E\D$. In particular they map to $p$-coCartesian edges in $\C$.  
\end{proof}
We are now ready to prove  the main result of this section, i.e. \Cref{prop : pullback}.
\begin{proof}[Proof of \Cref{prop : pullback}]
By \Cref{lm : glob}, $\D^\otimes\to \E^\otimes$ is a coCartesian fibration. 

Therefore, by \Cref{lm : fibs} and the assumption that $\D^\otimes\to \E^\otimes$ is a morphism of $\Oo$-monoidal categories, it follows that $\C^\otimes\times_{\E^\otimes}\D^\otimes\to \Oo^\otimes$ is a coCartesian fibration, and that the projection to $\C^\otimes$ preserves coCartesian edges. To simplify notation, we will call the pullback $K^\otimes$. 

According to \cite[Definition 2.1.2.13]{HA}, to conclude we now have to show that for every $T\in \Oo^\otimes$, decomposing as $T= T_1\oplus ... \oplus T_n$, the inert morphisms $T\to T_i$ induce an equivalence $K^\otimes_T \simeq \prod_{i=1}^n K^\otimes_{T_i}$. 

But this follows from the same claim for $\C^\otimes,\D^\otimes,\E^\otimes$ and the fact that $\C^\otimes\to \E^\otimes$ preserves coCartesian edges over inert edges (even if it doesn't preserve all coCartesian edges). 

Furthermore, \Cref{lm : fibs} also shows that $K^\otimes\to \C^\otimes$ preserves coCartesian edges, so in particular it is a morphism of $\Oo$-monoidal categories. 
\end{proof}
We note that the third- and second-to-last paragraphs, where we discuss coCartesian edges over inert edges constitute a proof of \Cref{rmk : pullbackopd}. 

We record the following description of algebras in the pullback, which follows from \Cref{rmk : pullbackopd} - it will not be used in the rest of this note but is of independent interest: 
\begin{cor}
Suppose $\D^\otimes\to \E^\otimes$ is as above, and $f: (\Oo')^\otimes\to\E^\otimes$ is an $\Oo'$-algebra in $\E^\otimes$ for some operad map $\Oo'\otimes\to \Oo$. Then the fiber of $\Alg_{\Oo'/\Oo}(\D)\to\Alg_{\Oo'/\Oo}(\E)$ at $f$ is equivalent to $\Alg_{\Oo'/\Oo}(\Oo'\times_\E \D)$
\end{cor}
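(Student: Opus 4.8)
The plan is to exploit that forming $\Oo'$-algebras is a mapping-\emph{out} construction in operads over $\Oo^\otimes$, and hence preserves limits. Concretely, write $\Alg_{\Oo'/\Oo}(-) = \Fun^{\mathrm{opd}}_{\Oo^\otimes}((\Oo')^\otimes, -)$ for the full subcategory of $\Fun_{\Oo^\otimes}((\Oo')^\otimes,-)$ spanned by the operad maps (the inert-preserving functors over $\Oo^\otimes$). First I would invoke remark \ref{rmk : pullbackopd} to know that the pullback $P^\otimes := (\Oo')^\otimes\times_{\E^\otimes}\D^\otimes$ — formed along $f$ and $q\colon\D^\otimes\to\E^\otimes$ — is computed in operads over $\Oo^\otimes$, so that $\Alg_{\Oo'/\Oo}(P)$ makes sense; and I would record, using lemma \ref{lm : glob}, that since $\D^\otimes\to\E^\otimes$ is a coCartesian fibration its pullback $\pi\colon P^\otimes\to(\Oo')^\otimes$ is one too, so that $P^\otimes$ is an honest $\Oo'$-operad.

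Since mapping out of the fixed object $(\Oo')^\otimes$ preserves limits, and since inert morphisms in a pullback of operads are detected by the two projections, $\Alg_{\Oo'/\Oo}(-)$ carries the defining pullback square of $P^\otimes$ to a pullback square of categories. The real content is then to identify the fiber of $q_*\colon \Alg_{\Oo'/\Oo}(\D)\to\Alg_{\Oo'/\Oo}(\E)$ over $f$. By definition a point of this fiber is an operad map $g\colon(\Oo')^\otimes\to\D^\otimes$ over $\Oo^\otimes$ together with an equivalence $q\circ g\simeq f$; equivalently, $g$ is an operad map over $\E^\otimes$. By the universal property of the pullback, such data is exactly an operad section of $\pi$, i.e. an $\Oo'$-algebra in the $\Oo'$-operad $P^\otimes=\Oo'\times_\E\D$, which is the desired equivalence.

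The step I expect to be the main obstacle is making this identification at the level of \emph{categories} rather than objects, while keeping the operad (inert-edge-preservation) condition under control: one must check that a functor $(\Oo')^\otimes\to P^\otimes$ preserves inert edges iff both projections do, that $\pi$ both preserves and detects inert edges (so that ``section of $\pi$'' is compatible with being an operad map), and that the universal property is natural in morphisms of algebras. Phrasing everything through sections of $\pi$ is the cleanest way to handle this, and it also clarifies the bookkeeping: the pullback-preservation of $\Alg_{\Oo'/\Oo}(-)$ produces a factor $\Alg_{\Oo'/\Oo}(\Oo')$ indexing operad self-maps of $(\Oo')^\otimes$ over $\Oo^\otimes$, and it is precisely the restriction to the identity of this factor (equivalently, the section condition $\pi\circ(-)=\id$) that cuts the fiber over $f$ out of the full mapping space.
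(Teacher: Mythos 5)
Your proof is correct and is essentially the paper's own argument: the paper offers no separate proof, saying only that the corollary follows from remark \ref{rmk : pullbackopd} (and, when the claim recurs as the second assertion of lemma \ref{lm A}, that it is ``immediate from the second construction''), which is exactly what you make explicit --- the pullback is a pullback of operads over $\Oo^\otimes$, mapping out of $(\Oo')^\otimes$ preserves it, inert edges are detected by the projections, and the fiber over $f$ corresponds to operad sections of $\pi$. Your final caveat is also the right reading of the statement's (slightly abusive) notation: taken literally, $\Alg_{\Oo'/\Oo}(\Oo'\times_\E\D)$ would be the whole pullback $\Alg_{\Oo'/\Oo}(\Oo')\times_{\Alg_{\Oo'/\Oo}(\E)}\Alg_{\Oo'/\Oo}(\D)$, and the stated fiber is its restriction to the identity component of the first factor, i.e.\ to operad sections of $\pi$.
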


\begin{rmk}
See \cite[Construction IV.2.1]{NS} for a related construction, where the authors take a pullback of one strong monoidal functor $F: \C\to \D$, and a lax one, $G: \C\to \D$, along the projection $\Fun(\Delta^1,\D)\to \D\times\D$. This construction does not appear to be exactly a special case of the one outlined above, because $\Fun(\Delta^1,\D)\to \D\times \D$ is not a coCartesian fibration (it is ``cartesian in the first variable, coCartesian in the second''\footnote{They are called bifibrations in \cite{HTT}, and are special cases of the (curved) orthofibrations of \cite{HHLN,HHLN2}.}, which is why they need $F$ to be strong monoidal). Is there a common generalization of both constructions ? 

The two constructions have a common special case, the one where $F$ is constant equal to the unit of $\D$, where the pullback described above (a ``lax equalizer'') is the pullback of $\D_{\mathbf 1/}\to \D$ along $G$. 
\pend\end{rmk}

\section{The microcosmic monoidal Grothendieck construction}\label{section : micro}
In this section, we describe a construction of the equivalence between lax monoidal functors and monoidally coCartesian fibrations. It is essentially already contained in \cite{HA}, and is explicitly proved in \cite[Proposition A.2.1]{hinichrect}, but we spell it out for the convenience of the reader. 

Recall its statement: 
\begin{thm}\label{thm : microcosm}
Let $\Oo$ be an operad and $\C$ an $\Oo$-monoidal category. There is an equivalence of categories $$\Fun^{lax-\Oo}(\C,\Cat)\simeq \coCart_\C^\Oo$$
between the category of lax $\Oo$-monoidal functors $\C\to \Cat$ and the category of $\Oo$-monoidally coCartesian fibrations over $\C$ which, on underlying objects is the un/straightening equivalence.
\end{thm}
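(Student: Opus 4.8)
The plan is to deduce the statement from ordinary straightening/unstraightening applied to the category $\C^\otimes$, by observing that — exactly as the ``rigged to work'' slogan suggests — both sides are cut out of $\Fun(\C^\otimes,\Cat)$ by literally the same Segal condition.

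First I would record the two relevant reformulations. On the one hand, by Definition \ref{defn : monfib}, an object of $\coCart_\C^\Oo$ is a coCartesian fibration $\D^\otimes\to\C^\otimes$ (over the $\infty$-operad underlying $\C^\otimes$) which is a $\C^\otimes$-monoidal category, i.e.\ which satisfies the Segal condition $\D^\otimes_X\simeq\prod_{i=1}^n\D^\otimes_{X_i}$ for every $X\simeq X_1\oplus\dots\oplus X_n$ in $\C^\otimes$; and $\coCart_\C^\Oo$ is the corresponding full subcategory of $\coCart_{\C^\otimes}$. On the other hand, using the explicit description of the cartesian symmetric monoidal structure $\Cat^\times$ (in the sense of \cite{HA}), for which $\Cat^\times_{\langle n\rangle}\simeq\Cat^n$ with inert maps inducing the projections, a lax $\Oo$-monoidal functor $\C\to\Cat$ — that is, a map of $\infty$-operads $\C^\otimes\to\Cat^\times$ over $\Oo^\otimes$ — unpacks to a functor $\widetilde F\colon\C^\otimes\to\Cat$ together with the requirement that the inert morphisms exhibit $\widetilde F(X)\simeq\prod_{i=1}^n\widetilde F(X_i)$. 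Thus $\Fun^{lax-\Oo}(\C,\Cat)$ is the full subcategory of $\Fun(\C^\otimes,\Cat)$ spanned by these ``Segal functors'', the fullness on morphisms being automatic since any natural transformation between Segal functors respects the product decompositions.

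Next I would apply the straightening equivalence of \cite{HTT} to the category $\C^\otimes$, giving $\coCart_{\C^\otimes}\simeq\Fun(\C^\otimes,\Cat)$, under which the fiber $\D^\otimes_X$ of the unstraightening is identified with the value $\widetilde F(X)$, and coCartesian pushforward along a morphism $\alpha$ of $\C^\otimes$ with the functor $\widetilde F(\alpha)$. The key point is then a direct comparison: under this identification the Segal condition defining $\coCart_\C^\Oo$ is carried precisely to the Segal condition defining $\Fun^{lax-\Oo}(\C,\Cat)$, because unstraightening carries external products of functors to products of coCartesian fibrations. Since both are full subcategories of equivalent ambient categories and correspond to the same spanning condition, the straightening equivalence restricts to the desired equivalence $\Fun^{lax-\Oo}(\C,\Cat)\simeq\coCart_\C^\Oo$; restricting attention to $\langle 1\rangle$, i.e.\ to $\C=\C^\otimes_{\langle 1\rangle}$, recovers the ordinary un/straightening on underlying objects, as required.

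I expect the main obstacle to be bookkeeping rather than conceptual: the delicate step is making the identification in the second paragraph fully functorial, i.e.\ checking that the Segal/inert data on the fibration side matches the operadic inert-preservation on the functor side through straightening, and that this holds at the level of mapping categories and not merely objects. Two further points require care. First, one must know that the unstraightening $\D^\otimes\to\C^\otimes$ of a Segal functor is genuinely $\Oo$-monoidal over $\Oo^\otimes$, i.e.\ that the composite $\D^\otimes\to\C^\otimes\to\Oo^\otimes$ is again a coCartesian fibration satisfying the $\Oo$-Segal condition; the former holds because composites of coCartesian fibrations are coCartesian, and the latter follows from the $\C^\otimes$-Segal condition together with the compatibility of unstraightening with products (alternatively one invokes Lemma \ref{lm : glob} and the gluing Lemma \ref{lm : cartfiber}). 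Second, one should verify that the description of $\Cat^\times$ used above — in particular the passage from operad maps into $\Cat^\times$ to Segal functors valued in $\Cat$ — is an equivalence of $\infty$-categories onto a full subcategory, for which the universal property of the cartesian structure in \cite{HA} is exactly what is needed.
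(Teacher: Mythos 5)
Your proposal is correct, and it is essentially a fleshed-out version of the route the paper itself sketches in the remark immediately \emph{following} its proof and attributes to Hinich: identify lax $\Oo$-monoidal functors $\C\to\Cat$ with lax cartesian structures (``Segal functors'') $\C^\otimes\to\Cat$ via \cite[Definition 2.4.1.1, Proposition 2.4.1.7]{HA}, apply ordinary un/straightening over $\C^\otimes$, and observe that the Segal conditions cutting out the two sides correspond under the identification of fibers with values. The paper's written proof is organized differently: it constructs the two functors separately --- the forward one by pulling back the universal monoidal coCartesian fibration $(\Cat_{*//})^\times\to\Cat^\times$ along the operad map $\C^\otimes\to\Cat^\times$ using proposition \ref{prop : pullback}, the backward one via lemma \ref{lm : glob} together with \cite[Propositions 2.1.2.12 and 2.4.1.7]{HA} --- and it explicitly defers the verification that these constructions are mutually inverse and functorial to \cite[Proposition A.2.1]{hinichrect}. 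Your formulation gets exactly that missing verification for free: since $\Fun^{lax-\Oo}(\C,\Cat)$ and $\coCart_\C^\Oo$ are exhibited as \emph{full} subcategories of the two sides of the equivalence $\Fun(\C^\otimes,\Cat)\simeq\coCart_{\C^\otimes}$, spanned by corresponding objects, the equivalence simply restricts, and functoriality is automatic. What the paper's pullback presentation buys instead is an explicit, pointwise description of the induced monoidal structure on the unstraightening (the tensor formula involving the lax structure maps $\mu$), which it wants for intuition and reuses later, e.g.\ in lemma \ref{lm A} and section \ref{section : macro}. Two small points to tighten in your write-up: a map of $\infty$-operads $\C^\otimes\to\Cat^\times$ lives over $\Fin_*$ rather than over $\Oo^\otimes$ (unless one first pulls $\Cat^\times$ back to $\Oo^\otimes$; the resulting notions agree), and the identification of objects of $\coCart_\C^\Oo$ with $\C^\otimes$-monoidal categories is not definition \ref{defn : monfib} verbatim but the content of its footnote, via lemma \ref{lm : glob}, so it deserves that justification rather than being read off directly.
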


We will spell out the constructions of the functors and sketch a proof that they form an inverse equivalence, we encourage the reader to consult \cite[Proposition A.2.1]{hinichrect} for details.

The observation for this result is that the straightening/unstraightening equivalence is obtained by pulling back along a specific coCartesian fibration over $\Cat$.

For simplicity of understanding, let us begin with the case of left fibrations. In this case, the universal left fibration is $\Ss_{*/}\to \Ss$. Further, the Grothendieck construction, or unstraightening of a functor $F: \C\to \Ss$ is given by the pullback $\C\times_\Ss \Ss_{*/}$, so one can describe objects of the Grothendieck construction as pairs $(c,x)$ where $c\in \C$ and $x\in F(c)$.

As the functor $\Ss_{*/}\to\Ss$ preserves finite products, it is canonically symmetric monoidal if we view both categories as \emph{cartesian} symmetric monoidal categories. In particular, if $F:~\C\to~\Ss$ is any lax symmetric monoidal functor, its Grothendieck construction $\C\times_{\Ss}~\Ss_{*/}$ admits a canonical symmetric monoidal structure for which the projection to $\C$ is symmetric monoidal by \Cref{prop : pullback}. The description from \Cref{section : pullback} shows that this construction matches the naive version one could think of : letting $\mu_{x,y}:~F(x)\times~F(y)\to~F(x\otimes~y)$ denote the natural transformation coming from the lax symmetric monoidal structure on $F$, the tensor product in this monoidal structure is $(c_0,x_0)\otimes (c_1,x_1) := (c_0\otimes c_1, \mu_{c_0,c_1}(x_0,x_1))$. 

The case of $\Cat$ is not actually different, it is simply a bit more complicated as the universal coCartesian fibration is a bit harder to describe in $1$-categorical terms : a reasonable notation for it is $\Cat_{*\sslash}$ - an informal description of this is as follows : objects are categories with a distinguished object $(D,d)$, and a morphism $(D,d)\to (E,e)$ is a functor $f: D\to E$ together with a morphism $f(d)\to e$.

See \cite[Definition 3.3.11]{land} and the subsequent remarks for more information; and see \cite[Remark 7.22]{HHLN2}, \cite[Corollary 6.4]{haugchu} for a formal description of the universal coCartesian fibration. 

The point is that this category still has finite products, and they are still preserved by the functor $\Cat_{*\sslash}\to \Cat$, so that we can still view it as a symmetric monoidal functor between cartesian symmetric monoidal categories.

Furthermore, the coCartesian edges are those of the form $(D,d)\to (E,e)$ with $f: D\to E$ and where $f(d)\to e$ is an equivalence. These are clearly preserved under products, and so $\Cat_{*\sslash}\to \Cat$ satisfies the assumptions that we put on $\D\to \E$ in \Cref{prop : pullback}. In particular we can prove \Cref{thm : microcosm}:
\begin{proof}[Sketch of proof of \Cref{thm : microcosm}]
By \cite[Proposition 2.4.1.7]{HA}, lax $\Oo$-monoidal functors $\C\to \Cat$ are equivalently functors $\C^{\otimes}\to \Cat$ (as opposed to $\C^{\otimes}\to\Cat^\times$) satisfying an extra condition, namely that of being a ``lax cartesian structure'' in the sense of \cite[Definition 2.4.1.1]{HA}. 

So we find equivalences $$\Fun^{lax-\Oo}(\C,\Cat)\simeq \Fun^{lax-cart}(\C^{\otimes},\Cat)\subset \Fun(\C^{\otimes},\Cat)\simeq \coCart_{\C^{\otimes}}$$
where the last equivalence is the unstraightening equivalence. 

We claim two things about the composite fully faithful embedding: firstly, that its image is exactly the category of $\Oo$-monoidally coCartesian fibrations $\coCart_{\C}^\Oo\subset \coCart_{\C^\otimes}$ (which will conclude the proof); and second that on objects it can be described as $$(f:~\C\to~\Cat)\mapsto~(\C\times_{\Cat}~\Cat_{*\sslash}\to~\C)$$ with the $\Oo$-monoidal structure described earlier.

The first verification amounts to the verification that $\coCart_\C^\Oo$ is indeed equivalently the category of $\C^\otimes$-monoidal categories in the sense of \cite[Definition 2.1.2.13]{HA} (which will rely on \cite[Proposition 2.1.2.12]{HA}), and is done by Hinich in \cite{hinichrect}. 

The second verification is strictly speaking not needed for the proof but we make it nonetheless to connect with the pullback description from above, for intuition and concreteness. Essentially by design, the construction described in the previous paragraph fits in a pullback square: 
\[\begin{tikzcd}
	P_1 & {(\Cat_{*\sslash})^\times} \\
	{\C^\otimes} & {\Cat^\times} & {}
	\arrow[from=1-1, to=1-2]
	\arrow[from=1-1, to=2-1]
	\arrow[from=1-2, to=2-2]
	\arrow["f"', from=2-1, to=2-2]
\end{tikzcd}\]
while the one coming from the above string of equivalences fits in a pullback square: 
\[\begin{tikzcd}
	{P_2} & {\Cat_{*\sslash}} \\
	{\C^\otimes} & \Cat & {}
	\arrow[from=1-1, to=1-2]
	\arrow[from=1-1, to=2-1]
	\arrow[from=1-2, to=2-2]
	\arrow["f"', from=2-1, to=2-2]
\end{tikzcd}\]

Thus to compare the two, it suffices to construct a pullback square: 
\[\begin{tikzcd}
	{(\Cat_{*\sslash})^\times} & {\Cat_{*\sslash}} \\
	{\Cat^\times} & \Cat & {}
	\arrow[from=1-1, to=1-2]
	\arrow[from=1-1, to=2-1]
	\arrow[from=1-2, to=2-2]
	\arrow[from=2-1, to=2-2]
\end{tikzcd}\]
We now observe that the construction of the map $D^\times\to D$ from \cite[Proposition 2.4.1.5]{HA} is natural in the category with finite products $D$, so that we have such a naturality square. Thus we have a map from $(\Cat_{*\sslash})^\times$ to the pullback over $\Cat^\times$, encoding a map of symmetric monoidal coCartesian fibrations over $\Cat$, which clearly induces an equivalence on underlying categories, and is thus an equivalence as well. 

%Let $f: \C\to \Cat$ be a lax $\Oo$-monoidal functor. Let $p: \D\to \C$ be the pullback map $\C\times_{\Cat}\Cat_{*\sslash}\to \C$, so that by the preceding discussion and by \Cref{prop : pullback}, we find an $\Oo$-monoidal structure on $\D$ for which the functor $p:\D\to \C$ is strong $\Oo$-monoidal (and the functor $\D\to\Cat_{*\sslash}$ is lax $\Oo$-monoidal, see \Cref{rmk : pullbackopd}). 

%Conversely, let $\D\to \C$ be an $\Oo$-monoidal functor whose underlying functor is a coCartesian fibration, such that coCartesian edges are preserved under $\Oo$-operations. By \Cref{lm : glob}, $\D^\otimes\to \C^\otimes$ is a coCartesian fibration. It therefore classifies a certain functor $\C^\otimes\to\Cat$, which extends the functor $\C\to \Cat$ classified by $\D$. 

%Furthermore, this functor $\C^\otimes\to\Cat$ is a lax cartesian structure in the sense of \cite[Definition 2.4.1.1]{HA}, by \cite[Proposition 2.1.2.12]{HA}, because $\D^\otimes\to \C^\otimes$ is a coCartesian fibration between operads. 

%By \cite[Proposition 2.4.1.7]{HA}, it therefore corresponds to a morphism of operads $\C^\otimes\to \Cat^\times$, i.e. a lax $\Oo$-monoidal functor $\C\to \Cat$. 
\end{proof}

%\begin{rmk}
%Essentially, what we are saying here, and the way Hinich's proof works, is that because of \cite[Definition 2.4.1.1, Proposition 2.4.1.7]{HA}, lax $\Oo$-monoidal functors $\C\to\Cat$ can be viewed as certain functors $\C^\otimes\to \Cat$ (as opposed to functors $\C^\otimes\to \Cat^\times$), and that we can therefore do un/straightening with them : $\Fun(\C^\otimes,\Cat)\simeq \coCart_{\C^\otimes}$. 

%It is then a matter of checking that the lax cartesian structures in the left hand side correspond exactly to ($\Oo$-)monoidal fibrations in the right hand side. We did things in terms of pullbacks here for intuition and also because \Cref{section : pullback} is interesting in its own right. 
%\pend\end{rmk}
\begin{rmk}
   From the construction, we find that strong $\Oo$-monoidal functors $\C\to\Cat$ correspond to the $\Oo$-monoidally coCartesian fibrations $\D\to\C$ satisfying the following property: for any active morphism $X\to Y$ in $\Oo^\otimes$ and any coCartesian lift thereof $c_X\to c_Y$ in $\C^\otimes$, the induced functor $\D_{c_X}\to \D_{c_Y}$ is an equivalence. 

   When $\Oo=\mathrm{Comm}$, this is slightly more concrete: the active morphisms are (generated by) those of the form $\mu_n :\langle n\rangle \to \langle 1\rangle$, and so the corresponding coCartesian lifts in $\C^\otimes$ are those of the form $(c_1,...,c_n)\to c_1\otimes ...\otimes c_n$. Since $\D^{\otimes}\to \C^\otimes$ is a symmetric monoidally coCartesian fibration, $D^{\otimes}_{(c_1,...,c_n)}\simeq \prod_{i=1}^n \D_{c_i}$ and the induced functor is $\prod_{i=1}^n \D_{c_i}\to \D_{c_1\otimes...\otimes c_n}, (d_1,...,d_n)\mapsto d_1\otimes ... \otimes d_n$. 
\pend\end{rmk}
\section{The metacosmic monoidal Grothendieck construction}\label{section : meta}
In this section, we prove \Cref{thm : metacosm}, our main result. As indicated in the introduction, our proof is inspired by the one in \cite{MV}, specifically lemma 3.9 therein, which is related to \cite[Proposition 6.10]{haugchu}, and is essentially the key point of our approach. We will also need to bootstrap from the microcosmic version.

We start by recalling the following, see \cite[Corollary 6.4]{haugchu}, where the notation is slightly different, they use $\mathrm{LSl}$ for what we denote here by $\LFib^{rep}$, namely we let $\LFib \subset~\Fun(\Delta^1,\Cat)$ denote the full subcategory spanned by left fibrations, and $\LFib^{rep}\subset\LFib$ the full subcategory spanned by \emph{representable} left fibrations, i.e. those equivalent to one of the form $\C_{x/}\to \C$.  

\begin{prop}
The restriction $ev_1: \LFib^{rep}\to \Cat$ of the evaluation functor is a coCartesian fibration. An edge therein 
$$\xymatrix{\C_{x/} \ar[r]\ar[d] & \D_{y/}\ar[d]\\ \C\ar[r]^f & \D}$$
is $ev_1$-coCartesian if and only if the top arrow preserves initial objects, i.e. if the canonical map $y\to f(x)$ induced by this square is an equivalence.

Furthermore, the functor $\Cat\to \Cat$ classified by $ev_1: \LFib^{rep}\to \Cat$ is exactly $\C\mapsto \C\op$.
\end{prop}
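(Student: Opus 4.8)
The plan is to compute the mapping spaces of $\LFib^{rep}$ relative to $ev_1$ and read off all three assertions at once. Fix objects $\C_{x/}$ and $\D_{y/}$. Since $\LFib^{rep}$ is a full subcategory of $\Fun(\Delta^1,\Cat)$ and $ev_1$ is the restriction of the target-evaluation functor, a morphism lying over a fixed $f\colon\C\to\D$ is the same datum as a map of left fibrations $\C_{x/}\to f^*\D_{y/}$ over $\C$, where $f^*\D_{y/}=\C\times_\D\D_{y/}$ is the pullback. This pulled-back left fibration classifies the functor $\map_\D(y,f(-))\colon\C\to\Ss$, so by the (co-)Yoneda lemma --- using that $\C_{x/}$ classifies $\map_\C(x,-)$ and that natural transformations out of a corepresentable functor are computed by evaluation --- the fibre of $\Map_{\LFib^{rep}}(\C_{x/},\D_{y/})\to\Map_\Cat(\C,\D)$ over $f$ is naturally $\map_\D(y,f(x))$. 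This single identification is the technical heart of the argument.

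With this formula in hand, the coCartesian edges fall out of the standard mapping-space criterion. An edge over $f$ corresponds to a point $\phi\in\map_\D(y,f(x))$, i.e.\ a map $\phi\colon y\to f(x)$ in $\D$ --- precisely the ``canonical map induced by the square''. For any third object $\E_{z/}$ and any $g\colon\D\to\E$, precomposition with this edge is identified, under the formula above, with postcomposition by $g(\phi)$, namely the map $\map_\E(z,g(y))\to\map_\E(z,g(f(x)))$. This is an equivalence for every such $(\E_{z/},g)$ if and only if $g(\phi)$ is always an equivalence, which (taking $g=\id_\D$) holds iff $\phi$ itself is an equivalence. This is exactly the stated criterion, and it matches ``the top functor preserves initial objects'': the initial object $(y\xrightarrow{\id}y)$ of $\D_{y/}$ is characterized among objects $(y\to w)$ by its structure map being an equivalence, so the image $(y\xrightarrow{\phi}f(x))$ of the initial object of $\C_{x/}$ is initial iff $\phi$ is an equivalence.

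Existence of coCartesian lifts is then immediate: given $\C_{x/}$ and $f\colon\C\to\D$, take $y=f(x)$ and $\phi=\id_{f(x)}$; the resulting edge $\C_{x/}\to\D_{f(x)/}$ is coCartesian by the criterion, so $ev_1$ is a coCartesian fibration. Finally, specializing the mapping-space formula to $f=\id_\C$ gives $\Map_{\LFib^{rep}_\C}(\C_{x/},\C_{y/})\simeq\map_\C(y,x)$, so the assignment $x\mapsto\C_{x/}$ identifies the fibre $\LFib^{rep}_\C$ with $\C\op$; and since the coCartesian lift computed above carries $\C_{x/}$ to $\D_{f(x)/}$, the pushforward along $f$ is $f\op\colon\C\op\to\D\op$. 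Hence the classified functor is $\C\mapsto\C\op$, as claimed.

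The step I expect to be the main obstacle is upgrading the pointwise mapping-space computation to a genuinely \emph{functorial} statement, so that it legitimately computes not just the coCartesian edges but also the straightened functor: the co-Yoneda identification must be made natural in all variables. One clean way to discharge this, and a useful sanity check on the last assertion, is to recall that $\LFib\to\Cat$ is classified by $\Fun(-,\Ss)$ with functoriality given by left Kan extension $f_!$, and that $f_!$ carries the corepresentable $\map_\C(x,-)$ to $\map_\D(f(x),-)$; thus $\LFib^{rep}$ is precisely the full subfibration spanned by corepresentables, with fibre $\C\op$ and pushforward $f\op$, recovering $\C\mapsto\C\op$.
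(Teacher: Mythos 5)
Your mapping-space computation handles the first two assertions soundly: identifying the fibre of $\Map_{\LFib^{rep}}(\C_{x/},\D_{y/})\to\Map_{\Cat}(\C,\D)$ over $f$ with $\map_\D(y,f(x))$ is correct (a map over $f$ is a map of left fibrations $\C_{x/}\to f^*\D_{y/}$ over $\C$, and the Yoneda lemma for the fixed category $\C$ finishes the computation), and the resulting criterion for $ev_1$-coCartesian edges together with the existence of the lifts $\C_{x/}\to\D_{f(x)/}$ is exactly right. Note that the paper does not prove this proposition itself --- it cites \cite[Corollary 6.4]{haugchu} --- so on these two points your direct argument is a legitimate self-contained alternative.

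The genuine gap is in the third assertion, and you have correctly located it but not discharged it. Knowing that each fibre $\LFib^{rep}_\C$ is equivalent to $\C\op$ and that each pushforward is identified, under these pointwise equivalences, with $f\op$ does \emph{not} determine the classified functor $\Cat\to\Cat$: a functor is not pinned down by its values and its action on morphisms up to objectwise equivalence, because all higher coherence data is missing (already for functors out of $BG$, actions that look trivial objectwise can assemble into many inequivalent functors). What is needed is a \emph{natural} equivalence $\C\op\simeq\LFib^{rep}_\C$, i.e.\ a natural transformation from $(-)\op$ to the straightening of $ev_1$ that is a pointwise equivalence --- and this is precisely the naturality of the Yoneda embedding with respect to the left Kan extension functoriality. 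Your proposed fix in the final paragraph is circular: saying that $\LFib^{rep}$ is the full subfibration of the fibration classified by $\Fun(-,\Ss)$ spanned by corepresentables, ``with fibre $\C\op$ and pushforward $f\op$,'' is again only a fibrewise statement, and concluding ``recovering $\C\mapsto\C\op$'' from it is exactly the assertion to be proved. The paper flags this point explicitly in remark \ref{rmk : naturalyoneda}: the required naturality of the Yoneda embedding is a theorem (\cite[Theorem 8.1]{HHLN}), with a separate reference devoted to the subtlety behind it. So your argument becomes complete only after invoking that result (or reproducing its proof); as written, the last part of the proposition is not established.
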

\begin{rmk}\label{rmk : naturalyoneda}
The proof in \cite{haugchu} relies on ``the naturality of the Yoneda embedding''. The fact that the Yoneda embedding is natural for the appropriate functoriality was proved in \cite[Theorem 8.1]{HHLN2} - see also \cite{yoneda} for a more elementary proof and a discussion of the subtlety behind this statement. 
\pend\end{rmk}
\begin{obs}
$\LFib^{rep}\subset \Fun(\Delta^1,\Cat)$ is closed under (cartesian) products, and the functor $ev_1$ preserves those. It is therefore canonically symmetric monoidal for the cartesian symmetric monoidal structure. 

Furthermore, $ev_1$-coCartesian edges are preserved under cartesian products, as is easily seen from their description above. 
\pend\end{obs}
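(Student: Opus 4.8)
The plan is to reduce both assertions to the standard decomposition of an undercategory over a product. First I would record the natural equivalence $(\C\times\D)_{(x,y)/}\simeq \C_{x/}\times\D_{y/}$, compatible with the projections down to $\C\times\D$: an object of the left-hand side is a map $(x,y)\to (c,d)$ in $\C\times\D$, that is, a pair consisting of a map $x\to c$ and a map $y\to d$, which is exactly an object of the right-hand side, and this identification is manifestly fibered over $\C\times\D$. Since products in $\Fun(\Delta^1,\Cat)$ are computed objectwise, this says precisely that the product of the representable left fibrations $\C_{x/}\to\C$ and $\D_{y/}\to\D$ is the representable left fibration $(\C\times\D)_{(x,y)/}\to\C\times\D$, represented by $(x,y)$. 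Hence $\LFib^{rep}$ is closed under binary products. For the empty product, the terminal object of $\Fun(\Delta^1,\Cat)$ is the identity edge $\pt\to\pt$ on the terminal category, which is the representable left fibration on the unique object of $\pt$; so $\LFib^{rep}$ is closed under all finite products.

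Next, $ev_1$ is the restriction of evaluation at the target $\Fun(\Delta^1,\Cat)\to\Cat$, which preserves all limits since limits in a functor category are computed objectwise; in particular it preserves finite products, and because $\LFib^{rep}$ is closed under finite products, these coincide with the products computed in $\LFib^{rep}$. A functor between categories admitting finite products that preserves them is canonically (strong) symmetric monoidal for the associated cartesian symmetric monoidal structures, by the universal property of the cartesian structure (\cite[\S 2.4.1]{HA}); applying this to $ev_1$ produces the claimed canonical symmetric monoidal enhancement.

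Finally, for the coCartesian edges I would invoke the characterization recalled in the proposition above: an edge from $(\C,x)$ to $(\D,y)$ lying over $f:\C\to\D$ is $ev_1$-coCartesian exactly when the induced comparison map $y\to f(x)$ is an equivalence. Given two such edges, over $f:\C\to\D$ and $f':\C'\to\D'$ with $y\xrightarrow{\sim} f(x)$ and $y'\xrightarrow{\sim} f'(x')$, their product is the edge from $(\C\times\C',(x,x'))$ to $(\D\times\D',(y,y'))$ over $f\times f'$, whose comparison map is $(y,y')\to (f(x),f'(x'))$ with components the two original maps. As a morphism in a product category is an equivalence if and only if each component is, the product edge is again $ev_1$-coCartesian; the nullary case is trivial, since the unique edge between terminal objects is an equivalence.

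The only point that demands genuine care is the very first one: upgrading the equivalence $(\C\times\D)_{(x,y)/}\simeq\C_{x/}\times\D_{y/}$ from an equivalence of categories to one that is \emph{natural} in $(\C,x)$ and $(\D,y)$ and \emph{fibered} over $\C\times\D$, in the $\infty$-categorical setting. Once that is secured, everything else is formal, following from the objectwise computation of limits in functor categories together with the universal property of the cartesian symmetric monoidal structure.
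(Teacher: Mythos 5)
Your proposal is correct and takes essentially the approach the paper intends for this observation: the fibered decomposition $(\C\times\D)_{(x,y)/}\simeq \C_{x/}\times\D_{y/}$ gives closure under products, pointwise computation of limits in $\Fun(\Delta^1,\Cat)$ gives product-preservation of $ev_1$, the universal property of cartesian symmetric monoidal structures gives the canonical monoidal enhancement, and the componentwise equivalence criterion from the preceding proposition handles the coCartesian edges. Your closing caveat is even slightly stronger than necessary: for this statement you only need the equivalence to be an equivalence over $\C\times\D$ (i.e.\ an equivalence in $\Fun(\Delta^1,\Cat)$), not naturality in $(\C,x)$ and $(\D,y)$, and that fibered equivalence follows directly from the fat-slice model $\C_{x/}\simeq\{x\}\times_\C\Fun(\Delta^1,\C)$ since $\Fun(\Delta^1,-)$ and fiber products commute with products.
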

We note that the ofllowing holds for general reasons:
\begin{lm}\label{lm:lfibcart}
    The symmetric monoidal structure on the functor $\Cat\to\Cat, \C\mapsto \C\op$ classified by the symmetric monoidally coCartesian fibration $ev_1: \LFib^{rep}\to\Cat$ is strict monoidal, and is therefore given by the canonical enhancement of product-preserving functors to symmetric monoidal functors. 
\end{lm}
\begin{proof}
    See the discussion in \cite[Corollaries 2.4.1.8 and 2.4.1.9]{HA}. 
\end{proof}
We want to provide a similar analysis for $\coCart\subset \Fun(\Delta^1,\Cat)$, the subcategory spanned by coCartesian fibrations and functors that send coCartesian edges to coCartesian edges.
We begin with a proposition which is analogous to \cite[Proposition 6.2]{haugchu}: 
\begin{prop}
The functor $ev_1: \coCart\to \Cat$ is a coCartesian fibration. 
\end{prop}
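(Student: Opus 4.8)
The plan is to identify the coCartesian pushforwards along $ev_1$ with left Kan extensions of coCartesian fibrations, and to deduce the statement from the corresponding \emph{cartesian} fibration together with a fiberwise adjoint criterion, rather than constructing coCartesian edges by hand.

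First I would check that $ev_1 : \coCart \to \Cat$ is a \emph{cartesian} fibration. Given $f : \C \to \D$ in $\Cat$ and an object $E' \to \D$ of $\coCart$, the candidate cartesian lift is the pullback square whose top horizontal map is $f^*E' = \C\times_\D E' \to E'$. This square lands in $\coCart$: the pullback of a coCartesian fibration is again one, and the projection $\C\times_\D E' \to E'$ preserves coCartesian edges, since the coCartesian edges of a pullback are exactly those lying over coCartesian edges of $E'$. For the universal property, a morphism $(G\to \C)\to (E'\to\D)$ of $\coCart$ lying over $f$ is the same datum as a coCartesian-edge-preserving map $G\to f^*E'$ over $\C$, i.e.\ a morphism $G\to f^*E'$ in $\coCart_\C$; this exhibits the pullback square as $ev_1$-cartesian. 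Hence $ev_1$ is a cartesian fibration whose fiber over $\C$ is $\coCart_\C$ and whose fiber transport along $f$ is the pullback functor $f^* : \coCart_\D \to \coCart_\C$.

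Next I would produce a left adjoint to each $f^*$. Under the straightening equivalence $\coCart_\C \simeq \Fun(\C,\Cat)$ the functor $f^*$ corresponds to restriction $\Fun(\D,\Cat)\to\Fun(\C,\Cat)$ along $f$ (see \cite{HTT}). Since $\Cat$ is cocomplete, restriction admits a left adjoint, namely left Kan extension $\on{Lan}_f$, so every $f^*$ admits a left adjoint $f_! \simeq \on{Lan}_f$. Concretely this $f_!$ is the coCartesian pushforward we are after: the $ev_1$-coCartesian edge over $f$ at $E\to\C$ is the $\coCart$-morphism $E \to f_!E$ over $f$ exhibiting the unit $E \to f^* f_! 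E$. Having established that $ev_1$ is cartesian with fiber transports $f^*$ that all admit left adjoints, I would then invoke the standard criterion that a cartesian fibration is automatically a coCartesian fibration as soon as each of its fiber-transport functors admits a left adjoint \cite[Corollary 5.2.2.5]{HTT}, which immediately gives the conclusion.

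The main obstacle is the bookkeeping around the fact that $\coCart \subset \Fun(\Delta^1,\Cat)$ is a \emph{non-full} subcategory: one must check that the pullback squares remain cartesian \emph{within} $\coCart$, where morphisms are required to preserve coCartesian edges, and not merely in the ambient functor category, and likewise that the adjoint criterion is applied to the genuine fiber transport of $ev_1$ restricted to $\coCart$. Once the identification of $f^*$ with restriction and of its left adjoint with $\on{Lan}_f$ is in place, the remainder is formal. One could alternatively bypass the criterion and verify directly, via the mapping-space characterization of coCartesian edges, that $E\to f_!E$ is $ev_1$-coCartesian and that such edges compose using $(gf)_!\simeq g_! f_!$, but the adjoint criterion is the cleaner route.
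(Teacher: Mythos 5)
Your proof is correct and follows essentially the same route as the paper: both establish that $ev_1$ is a cartesian fibration (via pullbacks of coCartesian fibrations), identify the fiber transport $f^*$ with restriction $\Fun(\D,\Cat)\to\Fun(\C,\Cat)$ under straightening, and conclude via left Kan extensions and the adjoint criterion of \cite[Corollary 5.2.2.5]{HTT}. Your additional care about the non-full subcategory issue and the explicit verification of the cartesian universal property is a welcome elaboration of what the paper leaves implicit, but it is not a different argument.
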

\begin{proof}
As pullbacks of coCartesian fibrations are coCartesian fibrations, it is clear that $\coCart\subset \Fun(\Delta^1,\Cat)$ is a sub-\emph{cartesian} fibration. 

In particular, it is a cartesian fibration. To prove that it is also coCartesian, it therefore suffices to prove that the pullback functors have left adjoints (\cite[Corollary 5.2.2.5]{HTT}).

If $f: \C\to \D$ is a functor, then the pullback functor $\coCart_\D\to \coCart_\C$ gets identified, under the straightening/unstraightening equivalence, with $f^* :~\Fun(\D,\Cat)\to~\Fun(\C,\Cat)$, which has a left adjoint given by left Kan extension. This proves the claim. 
\end{proof}
\begin{rmk}\label{rmk:notcocart}
    We warn the reader that while $\coCart\subset \Fun(\Delta^1,\Cat) $ is an inclusion of cartesian fibrations which are both cocartesian, it is \emph{not} a morphism of coCartesian fibrations, i.e. it does not preserve coCartesian edges. To see this, it suffices to observe that coCartesian edges in $\Fun(\Delta^1,\Cat)$ are simply the ones that induce equivalences on sources, so that the induced functors send $E\to \C$ to the composite $E\to \C\to\D$. It thus suffices to find a composite of a coCartesian fibration with an arbitrary functor which is no longer coCartesian. Examples of this abound, such as $\C^{\Delta^1}\xrightarrow{ev_1}\C$ followed by $\C\xrightarrow{\Delta}\C^{\Delta^1}$\footnote{More generally, suppose $E\to \C$ is a coCartesian fibration with no empty fibers, and suppose $\C\to\D$ is a functor such that there are objects in $\D$ not in the image of $\C$, that do receive maps from objects in the image. The composite $E\to\D$ is not coCartesian, since the induced functors would have to go from nonempty fibers to empty fibers.}. 
\pend\end{rmk}

We note that $\coCart\subset \Fun(\Delta^1,\Cat)$ is obviously closed under products, and those are preserved under $ev_1$. To conclude our analysis we now prove the following, which is analogous to \cite[Proposition 6.10]{haugchu}: 
\begin{prop}\label{prop : cocartcart}
In the coCartesian fibration $\coCart\xrightarrow{ev_1} \Cat$, coCartesian edges are preserved under products.
\end{prop}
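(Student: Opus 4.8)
The plan is to transport the statement across the straightening/unstraightening equivalence, where it becomes a Fubini-type compatibility of left Kan extension with external products. Recall from the proof of the previous proposition that $ev_1 \colon \coCart \to \Cat$ classifies the functor $\C \mapsto \coCart_\C \simeq \Fun(\C,\Cat)$, and that the coCartesian pushforward along $f \colon \C \to \D$ is, under this identification, the left Kan extension $f_!$ (the left adjoint to restriction $f^*$). Moreover, straightening sends a coCartesian fibration $X \to \C$ to its fiberwise functor $c \mapsto X_c$, so by the observation preceding the proposition (products in $\coCart$ lie over products in $\Cat$) the straightening of a product $X_1 \times X_2 \to \C_1 \times \C_2$ is the external product $F_1 \boxtimes F_2 \colon (c_1,c_2) \mapsto F_1(c_1) \times F_2(c_2)$, where $F_i$ straightens $X_i$. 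Under these dictionaries a coCartesian edge over $f_i \colon \C_i \to \D_i$ is the unit-type edge exhibiting $(f_i)_! F_i$, and the claim becomes: the external product of two such edges is again coCartesian, i.e. the comparison map
$$(f_1 \times f_2)_!(F_1 \boxtimes F_2) \longrightarrow (f_1)_! F_1 \boxtimes (f_2)_! F_2$$
is an equivalence.

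Next I would write this comparison map down explicitly, so as to be certain it is the one induced by the product of the two coCartesian edges. Starting from the units $F_i \to f_i^*(f_i)_! F_i$, I form their external product and use that restriction commutes with external products, $(f_1 \times f_2)^*(G_1 \boxtimes G_2) \simeq f_1^* G_1 \boxtimes f_2^* G_2$ (immediate from the pointwise formula for $f^*$), to obtain a map $F_1 \boxtimes F_2 \to (f_1 \times f_2)^*\big((f_1)_! F_1 \boxtimes (f_2)_! F_2\big)$; its adjoint is the desired comparison map, and by construction it is precisely the product in $\coCart$ of the two coCartesian edges. It therefore suffices to check that it is an equivalence, which I would do pointwise.

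For the pointwise check I would invoke the colimit formula for left Kan extension, $(f_! F)(d) \simeq \colim_{\C \times_\D \D_{/d}} F$. Since slices and fiber products commute with products, the indexing comma category for $f_1 \times f_2$ at $(d_1,d_2)$ splits as a product $A_1 \times A_2$ with $A_i = \C_i \times_{\D_i} (\D_i)_{/d_i}$, and the diagram being colimited is the external product of the two diagrams computing $(f_i)_! F_i$ at $d_i$. The statement then reduces to the identity $\colim_{A_1 \times A_2}\big(P \boxtimes Q\big) \simeq \big(\colim_{A_1} P\big) \times \big(\colim_{A_2} Q\big)$ in $\Cat$. This is where the only real content enters: because $\Cat$ is cartesian closed, the functor $(-) \times Z$ preserves colimits, so rewriting the colimit over $A_1 \times A_2$ as an iterated colimit and distributing products through colimits twice yields exactly the product of the two colimits.

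The main obstacle is not any single computation but the bookkeeping that glues these observations together: one must check that the pointwise equivalences are natural in $(d_1,d_2)$ and that they are implemented by the comparison map arising from the product of coCartesian edges, rather than by some other a priori map. Once cartesian closedness of $\Cat$ is in hand, the distribution of products over colimits is the decisive input, and everything else is a matter of matching up the external-product and comma-category decompositions.
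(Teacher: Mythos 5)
Your proposal is correct and follows essentially the same route as the paper's proof: transport the statement across un/straightening, identify the product of fibrations with the external product $p_{\C_0}^*F \times p_{\D_0}^*G$, reduce the claim to the canonical map $(\alpha\times\beta)_!(F\boxtimes G)\to \alpha_!F\boxtimes\beta_!G$ being an equivalence, and verify this pointwise via the equivalence of comma categories $(\C_0\times\D_0)\times_{\C_1\times\D_1}(\C_1\times\D_1)_{/(c,d)}\simeq (\C_0\times_{\C_1}(\C_1)_{/c})\times(\D_0\times_{\D_1}(\D_1)_{/d})$. The only difference is that you spell out the final distributivity step $\colim_{A_1\times A_2}(P\boxtimes Q)\simeq(\colim_{A_1}P)\times(\colim_{A_2}Q)$ via cartesian closedness of $\Cat$, which the paper leaves implicit.
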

\begin{proof}
This can be formulated as saying that for any pair of functors $\alpha: \C_0\to \C_1, \beta:~\D_0\to~\D_1$, the following natural transformation is an equivalence: 

\[\begin{tikzcd}
	{\coCart_{\C_0}\times \coCart_{\D_0}} & {\coCart_{\C_1}\times\coCart_{\D_1}} \\
	{\coCart_{\C_0\times \D_0}} & {\coCart_{\C_1\times\D_1}}
	\arrow[from=1-1, to=2-1]
	\arrow[from=1-2, to=2-2]
	\arrow["(\alpha\times\beta)_!"',from=2-1, to=2-2]
	\arrow["\alpha_!\times\beta_!",from=1-1, to=1-2]
	\arrow[shorten <=10pt, shorten >=10pt, Rightarrow, from=2-1, to=1-2]
\end{tikzcd}\]

i.e. the square strictly commutes, instead of only commuting up to a non-invertible natural transformation (this natural transformation comes from the fact that $$\coCart\times\coCart\xrightarrow{\times}\coCart\times_{\Cat}(\Cat\times\Cat)$$ is a morphism over $\Cat\times\Cat$, between cocartesian fibrations; and so we get canonical lax commutative squares as above). 

Under the straightening/unstraightening equivalence, the functor $\coCart_{\C_0}\times~\coCart_{\D_0}\to~\coCart_{\C_0\times\D_0}$ gets identified with $\Fun(\C_0,\Cat)\times\Fun(\D_0,\Cat)\to \Fun(\C_0\times\D_0,\Cat)$ given by $(F,G)\mapsto~((c,d)\mapsto~F(c)\times~G(d))$, or more precisely, $\Pi\circ p_{\C_0}^*F \times p_{\D_0}^*G$, where $p_{\C_0}$ (resp. $p_{\D_0}$) is the projection from the product $\C_0\times\D_0\to \C_0$ (resp. $\D_0$) and $\Pi:\Cat\times\Cat\to \Cat$ is the product functor. 

Indeed, if $X\to\C_0, Y\to \D_0$ are coCartesian fibrations, there is an equivalence of coCartesian fibrations over $\C_0\times\D_0$ (natural in $X,Y$) $X\times Y\simeq p_{\C_0}^*X\times_{\C_0\times\D_0} p_{\D_0}^*Y$, this fiber product is the cartesian product in $\coCart_{\C_0\times\D_0}$, and finally restriction in functor categories corresponds to pullback in fibrations. 

We denote this abusively by $F\times G$, hoping that no confusion arises. 

One checks similarly that the canonical natural transformation from above, in terms of functors, and evaluated at $(F,G)$ is the canonical morphism $(\alpha\times\beta)_!(F\times G)\to \alpha_! F\times \beta_! G$. 

But this is an equivalence, because left Kan extensions are pointwise ($\Cat$ is cocomplete) so that the canonical morphism from above is pointwise of the form:  $$(\colim_{ (\C_0\times\D_0)\times_{\C_1\times\D_1}(\C_1\times\D_1)_{/(c,d)}} F\times G) \to (\colim_{ (\C_0\times_{\C_1}(\C_1)_{/c} )} F)\times (\colim_{ (\D_0\times_{\D_1}(\D_1)_{/d}  )} G)$$
and the comma categories appearing in the indexing of these colimits are equivalent: the canonical functor  $$(\C_0\times\D_0)\times_{\C_1\times\D_1}(\C_1\times\D_1)_{/(c,d)}\to (\C_0\times_{\C_1}(\C_1)_{/c} )\times (\D_0\times_{\D_1}(\D_1)_{/d}  ) $$ is an equivalence - here, we use that for any $C\in\Cat$, $C\times -$ commutes with colimits to reduce to a comparison between indexing categories. 
This concludes the proof.
\end{proof}
\begin{rmk}
It is likely that one could also give a proof without using the straightening/unstraightening equivalence, but using a description of left Kan extensions at the level of coCartesian fibrations. Such a description is not hard to guess using the description of left Kan extensions in terms of comma categories (and the description of colimits in terms of localizations) and one can most likely prove it using \cite{gepnerhaugsengnik}. Namely, the guess is that if $\alpha: \C_0\to C_1$ is a functor and $f:\E_0\to \C_0$ a coCartesian fibration classifying $F:\C_0\to \Cat$, then the coCartesian fibration classifying the left Kan extension $\alpha_! F$ is classified by $(\E_0\times_{\C_1}\Fun(\Delta^1,\C_1))[W^{-1}]$ where $W$ is the class of maps whose projection to $\E_0$ is $f$-coCartesian (this is supposed to represent the pointwise formula for left Kan extensions, using the way colimits in $\Cat$ are computed). 

The description that one gets this way is clearly stable under products. 
\pend\end{rmk}
We use \Cref{prop : cocartcart} to define a lax symmetric monoidal structure on $\C\mapsto \coCart_\C$:
\begin{defn}\label{defn : cancocart}
Let $\C\mapsto \coCart_\C$ denote the functor $\Cat\to \widehat{\Cat}$ classified by $\coCart\to~\Cat$.\footnote{One can compare this functoriality to the one used in \cite[Appendix A]{gepnerhaugsengnik}, by observing that our contravariant functoriality is a subfunctor of $\Cat_{/\C}$, just like the one from that paper. In other words, the functorality described just here is obtained from the one in that paper by taking left adjoints. We will not need this fact.  }

\Cref{prop : cocartcart} shows that $\coCart^\times \to \Cat^\times$ is a symmetric monoidally coCartesian fibration, so we may apply \Cref{thm : microcosm} to get a lax symmetric monoidal structure on the functor it classifies. We call this the canonical lax symmetric monoidal structure on the functor $\C\mapsto\coCart_\C$.  
\pend\end{defn}
In other words, what we proved is that the situation for $\coCart\to \Cat$ is the same as for $\LFib^{rep}\to \Cat$. We furthermore note that $\LFib^{rep}$ is included in $\coCart$ (both lie inside $\Fun(\Delta^1,\Cat)$), so that the following makes sense :
\begin{lm}
The inclusion $\LFib^{rep}\to \coCart$ preserves $ev_1$-coCartesian morphisms, and products. 
\end{lm}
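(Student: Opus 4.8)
The plan is to treat the two assertions separately; the product claim is essentially formal given what has already been established, while preservation of $ev_1$-coCartesian edges reduces to a single computation about left Kan extensions of corepresentable functors.

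First, for products. It was already observed that $\LFib^{rep}$ and $\coCart$ are each closed under products inside $\Fun(\Delta^1,\Cat)$, which is to say that in each case the product is computed as in the ambient functor category. Since $\LFib^{rep}\subseteq\coCart\subseteq\Fun(\Delta^1,\Cat)$, the product of two representable left fibrations $\C_{x/}\to\C$ and $\D_{y/}\to\D$ is the same object of $\Fun(\Delta^1,\Cat)$ whether it is formed in $\LFib^{rep}$ or in $\coCart$, namely $\C_{x/}\times\D_{y/}\to\C\times\D$, which one recognises as $(\C\times\D)_{(x,y)/}\to\C\times\D$. Hence the inclusion carries products to products; the same reasoning with the empty product handles the terminal object $\pt\to\pt$.

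Second, for $ev_1$-coCartesian edges. Both $ev_1:\LFib^{rep}\to\Cat$ and $ev_1:\coCart\to\Cat$ are coCartesian fibrations and the inclusion is a functor over $\Cat$, so by the usual criterion it preserves $ev_1$-coCartesian edges if and only if, for every $f:\C\to\D$ and every representable left fibration $\C_{x/}$, the canonical comparison between the pushforward of $\C_{x/}$ computed in $\LFib^{rep}$ and the pushforward of its image computed in $\coCart$ is an equivalence. By the recalled description of $ev_1$-coCartesian edges in $\LFib^{rep}$, the former pushforward is $\D_{f(x)/}$. By the identification of the pushforward in $\coCart$ with left Kan extension under straightening, the latter is the left fibration classified by $f_!\map_\C(x,-)$. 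So everything comes down to the identity $f_!\map_\C(x,-)\simeq\map_\D(f(x),-)$.

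I would prove this identity as follows. The corepresentable functor $\map_\C(x,-):\C\to\Ss$ is itself a left Kan extension: writing $\pt:\Delta^0\to\Ss$ for the functor picking out a point and $x:\Delta^0\to\C$, one has $(x_!\pt)(c)\simeq\colim_{\Delta^0\times_\C\C_{/c}}\pt\simeq\map_\C(x,c)$, so $\map_\C(x,-)\simeq x_!\pt$. Because $\Ss\hookrightarrow\Cat$ is a left adjoint it preserves colimits, so this Kan extension may equally be computed in $\Cat$; then by functoriality of left Kan extension along the composite $\Delta^0\xrightarrow{x}\C\xrightarrow{f}\D$ we get $f_!\map_\C(x,-)\simeq f_!x_!\pt\simeq(f\circ x)_!\pt\simeq\map_\D(f(x),-)$, which is exactly the functor classifying $\D_{f(x)/}\to\D$. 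This matches the two pushforwards and completes the argument. The only non-formal input is this compatibility of left Kan extension with corepresentables, which is where I expect the substance to lie; the rest is bookkeeping with the criterion for preserving coCartesian edges and the closure-under-products facts already established.
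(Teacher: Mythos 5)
Your proof is correct in its overall skeleton and in fact follows the same two reductions as the paper: the product claim is handled by closure of both subcategories under ambient products (the paper dismisses this as obvious; your identification $\C_{x/}\times\D_{y/}\simeq(\C\times\D)_{(x,y)/}$ is the right justification), and the coCartesian-edge claim is reduced, via the fiberwise criterion and the straightening identifications of the two pushforwards, to the statement that left Kan extension carries $\map_\C(x,-)$ to $\map_\D(f(x),-)$. Where you genuinely diverge is in how this last statement is established. The paper cites the naturality of the Yoneda embedding (\cite[Theorem 8.1]{HHLN}, flagged in its remark on this point), which is a coherence theorem about the entire lax square $\C\op\to\Fun(\C,\Cat)$, $\D\op\to\Fun(\D,\Cat)$ being strictly commutative. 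You instead prove the needed statement pointwise, by writing $\map_\C(x,-)\simeq x_!\pt$ and using $f_!x_!\simeq(f\circ x)_!$. This is a legitimate simplification: since equivalences of edges and of natural transformations are detected pointwise, the lemma only needs the pointwise statement, and your route avoids invoking a substantial external theorem. What the paper's citation buys in exchange is that it is a statement about the \emph{canonical} transformation, which brings me to the one point you should make explicit.

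Your reduction is correctly phrased in terms of "the canonical comparison being an equivalence," but your argument as written only produces \emph{an} equivalence $f_!\map_\C(x,-)\simeq\map_\D(f(x),-)$; an abstract equivalence of objects does not by itself imply that a given canonical map between them is invertible. The gap is fillable by a routine check: the canonical comparison is adjoint to the functoriality map $\map_\C(x,-)\to f^*\map_\D(f(x),-)$, which under the Yoneda lemma corresponds to $\id_{f(x)}\in\map_\D(f(x),f(x))$; on the other hand, your chain of equivalences is also adjoint to a map corresponding to $\id_{f(x)}$, because the equivalence $f_!x_!\simeq(f\circ x)_!$ is compatible with the adjunction units and the identification $x_!\pt\simeq\map_\C(x,-)$ matches the unit point of $(x_!\pt)(x)$ with $\id_x$. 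Hence the two maps agree and the canonical one is an equivalence. You should include this identification (or at least remark that your equivalence is the adjoint of the functoriality map): it is exactly the pointwise shadow of the subtlety that the paper's appeal to the naturality of the Yoneda embedding is designed to absorb, and omitting it is the only real lacuna in an otherwise sound argument.
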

\begin{proof}
The claim about products is obvious.

For $ev_1$-coCartesian morphisms, we again use straightening: by looking at fibers, the claim is that for any $f: \C\to \D$, the natural transformation in the following square is an equivalence: 
\[\begin{tikzcd}
	{\LFib^{rep}_\C} & {\coCart_\C} \\
	{\LFib^{rep}_\D} & {\coCart_\D}
	\arrow[from=1-1, to=1-2]
	\arrow[from=1-1, to=2-1]
	\arrow[from=2-1, to=2-2]
	\arrow[from=1-2, to=2-2]
	\arrow[shorten <=5pt, shorten >=5pt, Rightarrow, from=1-2, to=2-1]
\end{tikzcd}\]
Under straightening, this square gets identified with :
\[\begin{tikzcd}
	\C\op & {\Fun(\C,\Cat)} \\
	\D\op & {\Fun(\D,\Cat)}
	\arrow[from=1-1, to=2-1]
	\arrow[from=1-1, to=1-2]
	\arrow[from=2-1, to=2-2]
	\arrow[from=1-2, to=2-2]
	\arrow[shorten <=6pt, shorten >=6pt, Rightarrow, from=1-2, to=2-1]
\end{tikzcd}\]
and the canonical transformation being invertible is simply a witness of the naturality of the Yoneda embedding (and the fact that the inclusion $\Ss\subset \Cat$ preserves colimits). 
\end{proof}

\begin{cor}\label{cor : yonmon}
The inclusion $\LFib^{rep}\to \coCart$ induces a symmetric monoidal natural transformation of lax symmetric monoidal functors $\Cat\to \widehat{\Cat}$: $$\C\op\to \coCart_\C$$
\end{cor}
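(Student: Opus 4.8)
The plan is to deduce the corollary formally from the preceding lemma, exploiting the fact that theorem \ref{thm : microcosm} is an \emph{equivalence of categories} and therefore transports morphisms to morphisms. Both lax symmetric monoidal functors in the statement arise, via definition \ref{defn : cancocart} and its evident $\LFib^{rep}$-analogue (the observations preceding this corollary), from the symmetric monoidal coCartesian fibrations $\LFib^{rep,\times}\to\Cat^\times$ and $\coCart^\times\to\Cat^\times$ for the cartesian structures; these are two objects of the category $\widehat{\coCart}^{\mathrm{Comm}}_{\Cat}$ of symmetric monoidal coCartesian fibrations over $\Cat$ (working one universe up). Taking $\Oo=\mathrm{Comm}$ and base $\Cat$ in theorem \ref{thm : microcosm} gives an equivalence $\Fun^{lax}(\Cat,\widehat{\Cat})\simeq \widehat{\coCart}^{\mathrm{Comm}}_{\Cat}$ carrying $\C\mapsto\C\op$ and $\C\mapsto\coCart_\C$ to these objects and identifying morphisms of lax symmetric monoidal functors (that is, symmetric monoidal natural transformations) with morphisms in $\widehat{\coCart}^{\mathrm{Comm}}_{\Cat}$. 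So it suffices to exhibit the inclusion as such a morphism, namely as a functor over $\Cat^\times$ preserving coCartesian edges for the fibrations over $\Cat^\times$.

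First I would promote the inclusion to a map over $\Cat^\times$. Since by the lemma the inclusion $\LFib^{rep}\to\coCart$ preserves products, it is canonically symmetric monoidal for the cartesian structures, yielding a symmetric monoidal functor $\LFib^{rep,\times}\to\coCart^\times$; and because it commutes with $ev_1$ compatibly with products, this is a functor over $\Cat^\times$. This part is formal.

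The substantive step is to check that this functor preserves coCartesian edges for $\LFib^{rep,\times}\to\Cat^\times$ and $\coCart^\times\to\Cat^\times$ over the \emph{whole} base $\Cat^\times$, and not merely fiberwise over $\langle 1\rangle\in\Fin_*$. Here I would use the way these fibrations were assembled in lemma \ref{lm : glob} out of lemma \ref{lm : cartfiber}: a coCartesian edge over $\Cat^\times$ is built from fiberwise (over $\Fin_*$) coCartesian edges — which in each slot are the $ev_1$-coCartesian edges of $\coCart\to\Cat$ — together with the $\Fin_*$-coCartesian structure, which for the cartesian monoidal structures is implemented by the product (external-product and pushforward) operations. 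Factoring any morphism of $\Fin_*$ into its inert and active parts and reducing to active morphisms with target $\langle 1\rangle$, one sees that preservation of coCartesian edges over $\Cat^\times$ reduces exactly to the two assertions of the lemma: preservation of $ev_1$-coCartesian morphisms (for the fiberwise part, applied slotwise) and preservation of products (for the operadic/active part). I expect this reduction to be the main obstacle, since it requires describing the coCartesian edges of the glued fibration over active morphisms of $\Fin_*$, rather than only over identities where the lemma applies verbatim.

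Granting this, the inclusion is a morphism in $\widehat{\coCart}^{\mathrm{Comm}}_{\Cat}$, and applying the equivalence of theorem \ref{thm : microcosm} yields the desired symmetric monoidal natural transformation $\C\op\to\coCart_\C$; on underlying natural transformations its components straighten to the (co)Yoneda embeddings, exactly as recorded in the proof of the preceding lemma.
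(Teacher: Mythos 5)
Your proposal is correct and follows essentially the same route as the paper: both $\LFib^{rep,\times}\to\Cat^\times$ and $\coCart^\times\to\Cat^\times$ are symmetric monoidal coCartesian fibrations for the cartesian structures, the inclusion is a morphism of such by the preceding lemma, and theorem \ref{thm : microcosm} applied one universe up (as an equivalence of categories) transports this morphism to a symmetric monoidal transformation of the straightened lax symmetric monoidal functors. The step you flag as the main obstacle --- preservation of coCartesian edges over all of $\Cat^\times$ rather than merely fiberwise --- is exactly what the paper's proof leaves implicit, and your reduction (factoring coCartesian edges of the glued fibration, via lemma \ref{lm : cartfiber}, into a $\Fin_*$-coCartesian piece handled by product-preservation and a fiberwise piece handled by $ev_1$-coCartesian-preservation) is the correct way to fill it in.
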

\begin{proof}
Both $\LFib^{rep}$ and $\coCart$ have finite products, and the functor $ev_1$ preserves products for both, so we can see it as a symmetric monoidal functor to $\Cat$, where both the domain and codomain are viewed as cartesian symmetric monoidal categories. 

Furthermore, we have seen that coCartesian edges are closed under product, so that they are both symmetric monoidally coCartesian fibrations over $\Cat$. 

\Cref{thm : microcosm} implies (up to passing to a larger universe) that their straightening is canonically lax symmetric monoidal, and the transformation between them too. 
\end{proof}
\begin{prop}\label{prop : liftcocart}
The lax symmetric monoidal functor $\Cat\to \widehat{\Cat}, \C\mapsto \coCart_\C$ classified by $\coCart$ has a canonical lax symmetric monoidal lift to $\Fun(\Cat, \Mod_{\Cat}(\PrL))$ which, pointwise, agrees with the $\Cat$-linear structure induced from the inclusion $\coCart_\C\subset~\Cat_{/\C}$. 
\end{prop}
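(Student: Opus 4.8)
The plan is to construct the lift abstractly and then identify it pointwise. First I would set the stage: since $\Cat$ is cartesian closed and presentable, it is a commutative algebra in $\PrL$ for the cartesian product, so $\Mod_{\Cat}(\PrL)$ together with its relative tensor product $\otimes_{\Cat}$ is defined. The forgetful functor $\Mod_{\Cat}(\PrL)\to\PrL\to\widehat{\Cat}$ is lax symmetric monoidal from $\otimes_{\Cat}$ to the cartesian structure (the canonical maps $M\otimes N\to M\otimes_{\Cat}N$ and $M\times N\to M\otimes N$, together with $\ast\to\Ss\to\Cat$ on units, provide the lax structure constraints), so it is meaningful to ask for a lax symmetric monoidal lift of $\C\mapsto\coCart_\C$ along it, compatible with Definition \ref{defn : cancocart}.

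Next I would pin down the pointwise module structure. Cartesian closedness of $\Cat$ makes each slice $\Cat_{/\C}$ a $\Cat$-module in $\PrL$, with $K\otimes(X\to\C)=(K\times X\to X\to\C)$, and $\coCart_\C$ is closed under this action because $K\times X\to\C$ is again a coCartesian fibration (its coCartesian edges are products of an equivalence in $K$ with a coCartesian edge of $X$). Combined with appendix \ref{section : straight} (coCartesian fibrations are closed under colimits in the slice) and the ordinary equivalence $\coCart_\C\simeq\Fun(\C,\Cat)$, this exhibits $\coCart_\C$ as an object of $\Mod_{\Cat}(\PrL)$ and the inclusion $\coCart_\C\subset\Cat_{/\C}$ as $\Cat$-linear; under straightening the action is the pointwise product $K\otimes F=\underline{K}\times F$, which is exactly the structure named in the statement, so the claimed pointwise agreement will hold by construction. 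For functoriality, the covariant transition maps $f_!\colon\coCart_\C\to\coCart_\D$ are, under straightening, left Kan extension along $f$, and these are $\Cat$-linear by the projection formula: pointwise the comparison map is $\colim_{\C\times_\D\D_{/d}}(\underline{K}\times F)\to K\times\colim_{\C\times_\D\D_{/d}}F$, an equivalence because $K\times(-)$ preserves colimits. Hence the transition functors refine to morphisms in $\Mod_{\Cat}(\PrL)$.

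The main work, and the main obstacle, is to assemble these pointwise module structures and $\Cat$-linear transition maps into a single lax symmetric monoidal functor to $\Mod_{\Cat}(\PrL)$, coherently and compatibly with the lax symmetric monoidal structure of Definition \ref{defn : cancocart}. The approach is to run the microcosm mechanism of theorem \ref{thm : microcosm} one notch higher. The constant functor at $\Cat$ is a commutative algebra object in the category of lax symmetric monoidal functors $\Cat\to\PrL$ (as $\Cat\in\CAlg(\PrL)$), and the product-with-$K$ operations assemble into an action map exhibiting $\C\mapsto\coCart_\C$ as a module over it; lifting a lax symmetric monoidal functor along the forgetful functor $\Mod_{\Cat}(\PrL)\to\PrL$ is precisely the datum of such a module structure. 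Because every operation in sight is \emph{cartesian} (product with $K$), the product-preservation bookkeeping of proposition \ref{prop : cocartcart}, applied to the action map in place of the multiplication, shows that this action is compatible with coCartesian edges and with the symmetric monoidal structure, so straightening carries it to the desired $\Cat$-linear lax symmetric monoidal refinement.

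The delicate point, as elsewhere in the paper, is the coherence of the principle that an action on the total fibration $\coCart^\times\to\Cat^\times$ straightens to a module structure on the classified functor, lax-symmetric-monoidally and not merely objectwise. Making this fully rigorous without quietly invoking $(\infty,2)$-categorical technology is where care is required; I expect it to be the crux of the argument. What keeps it tractable is exactly the cartesian nature of the $\Cat$-action: it lets one encode the module structure and its coherences through the same product-preservation arguments (proposition \ref{prop : cocartcart} and its variants) already used to produce the lax symmetric monoidal structure itself, rather than through genuine higher-categorical naturality, and the pointwise and functorial computations above then identify the resulting structure with the one induced from $\coCart_\C\subset\Cat_{/\C}$.
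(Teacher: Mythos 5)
Your proposal stalls exactly where you say it does, and the gap is genuine. The assembly step---turning the pointwise $\Cat$-module structures and the $\Cat$-linear transition functors $f_!$ into a single lax symmetric monoidal functor $\Cat\to\Mod_{\Cat}(\PrL)$---is not something proposition \ref{prop : cocartcart} ``applied to the action map'' can deliver. That proposition feeds theorem \ref{thm : microcosm} to produce the lax symmetric monoidal structure of definition \ref{defn : cancocart}; to extract from it a module structure on the \emph{functor} $\C\mapsto\coCart_\C$ over the constant algebra at $\Cat$, you would need an unstraightening statement for left-module-algebra structures on lax symmetric monoidal functors, which is precisely the coherence you flag as ``the crux'' and never supply. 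In addition, your claim that lax symmetric monoidal lifts along $\Mod_{\Cat}(\PrL)\to\PrL$ are ``precisely'' such module structures over the constant algebra is itself an unproven equivalence, of the same order of difficulty as what it is meant to reduce to.

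The paper's proof avoids constructing any new coherence at all, via an observation your outline misses: $*$ is the \emph{unit} of the cartesian symmetric monoidal structure on $\Cat$, and any lax symmetric monoidal functor lifts canonically, lax symmetric monoidally, to modules over the image of the unit. Hence the functor of definition \ref{defn : cancocart} factors through $\Mod_{\coCart_*}(\widehat{\Cat})$ with no further construction, and all remaining work is identification of structures already present, done with lemma \ref{lm A}: taking $\Oo'=\mathrm{Comm}$ identifies $\coCart_*$ with $\Cat$ as cartesian symmetric monoidal categories, and taking $\Oo'$ to be the operad classifying left modules, together with the map $\coCart^\times\to\Fun(\Delta^1,\Cat)^\times$ over $\Cat^\times$ and the stability of $\coCart_\C\subset\Cat_{/\C}$ under $-\times K$ (proposition \ref{prop : colim}), identifies the pointwise module structure with the one induced from the slice. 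Passing from $\widehat{\Cat}$ to $\PrL$ is then a property check (presentability, colimit-preservation in each variable), exactly as you anticipated. Your pointwise computations---the action $K\otimes(X\to\C)=(K\times X\to\C)$, closure of $\coCart_\C$ under it, the projection-formula argument for $f_!$---are correct and correspond to the paper's verifications, but without the unit observation they never assemble into the asserted lift.
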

By the latter, we mean that $\Cat_{/\C}$ is naturally a $\Cat$-module, and $\coCart_\C\subset\Cat_{/\C}$ is stable under this structure, and therefore inherits a $\Cat$-module structure as well. 
\begin{rmk}
Informally, this module structure is easy to describe:  given $\D\to \C\in~\coCart_\C$ and $K\in \Cat$, take the action of $K$ on $\D$ to be $\D\times K\to \D\to \C$, where $\D\times K\to\D$ is the projection map. 
\pend\end{rmk}
The point of this proposition is essentially that $*$ is the unit of $\Cat$, and this is sent to $\Cat$ in $\widehat{\Cat}$ (moving from $\widehat{\Cat}$ to $\PrL$ is something that can be \emph{checked}, and we have nothing to construct). To make this precise, we have the following lemma, which is almost tautological, but which we will also use later on: 

\newcommand{\F}{\cat F }
\begin{lm}\label{lm A}
Let $\E\to \F$ be an $\Oo$-monoidally coCartesian fibration, $\Oo'$ an operad over $\Oo$.

Let $f\in\Alg_{\Oo'/\Oo}(\F)$. The following two constructions equip the fiber $\E_f$ with an  $\Oo'$-monoidal structure: 
\begin{enumerate}
    \item The functor classified by $\E$ is canonically lax $\Oo$-monoidal $\F\to\Cat$ via \Cref{thm : microcosm} and therefore sends the $\Oo'$-algebra $f$ to an $\Oo'$-algebra $\E_f$ in $\Cat$, which is the same thing as an $\Oo'$-monoidal category .
    \item One can take the following pullback as in \Cref{prop : pullback}: $$\xymatrix{\E_f^\otimes \ar[r] \ar[d] & \E^\otimes \ar[d]\\ (\Oo')^\otimes \ar[r]^f & \F^\otimes}$$
\end{enumerate}
These two $\Oo'$-monoidal structures agree, and $\Alg_{\Oo'/\Oo}(\E_f)$ is the fiber of $\Alg_{\Oo'/\Oo}(\E)\to~\Alg_{\Oo'/\Oo}(\F)$ over $f$. 
\end{lm}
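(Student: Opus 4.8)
The plan is to show that the two constructions realize literally the \emph{same} coCartesian fibration over $(\Oo')^\otimes$, so that the agreement of the two $\Oo'$-monoidal structures becomes a pasting-of-pullbacks argument; the final assertion about algebras is then a direct application of the corollary to Proposition \ref{prop : pullback}. The starting point is the recollection that the equivalence of Theorem \ref{thm : microcosm} is implemented by pullback against the (cartesian-monoidal) universal coCartesian fibration $\Cat_{*//}^\times \to \Cat^\times$. Concretely, if $\Phi \colon \F^\otimes \to \Cat^\times$ denotes the lax $\Oo$-monoidal functor classified by $\E^\otimes \to \F^\otimes$, then, as $\Oo$-monoidal coCartesian fibrations over $\F^\otimes$, there is a canonical identification $\E^\otimes \simeq \F^\otimes \times_{\Cat^\times} \Cat_{*//}^\times$, the right-hand side carrying the $\Oo$-monoidal structure supplied by Proposition \ref{prop : pullback}. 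This is just the statement that straightening and unstraightening are mutually inverse in Theorem \ref{thm : microcosm}, applied to $\E$.

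Next I would unwind the two constructions. Construction (2) is by definition the pullback $\E_f^\otimes := (\Oo')^\otimes \times_{\F^\otimes} \E^\otimes$ with its Proposition \ref{prop : pullback} structure (valid after restricting the ambient data along the operad map $(\Oo')^\otimes \to \Oo^\otimes$, since base change preserves $\Oo$-monoidal coCartesian fibrations and the preservation of coCartesian edges by operations). Construction (1) produces the $\Oo'$-monoidal category $\Psi := \Phi \circ f^\otimes \colon (\Oo')^\otimes \to \Cat^\times$, i.e.\ the image of $f$ under $\Phi_\ast \colon \Alg_{\Oo'/\Oo}(\F) \to \Alg_{\Oo'/\Oo}(\Cat)$. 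To compare the two I would realize $\Psi$ as an $\Oo'$-monoidal coCartesian fibration over $(\Oo')^\otimes$ by applying Theorem \ref{thm : microcosm} to the \emph{unit} $\Oo'$-monoidal category $(\Oo')^\otimes$ itself: lax $\Oo'$-monoidal functors out of the unit are precisely maps of operads $(\Oo')^\otimes \to \Cat^\times$, hence exactly $\Oo'$-monoidal categories, and the corresponding $\Oo'$-monoidal coCartesian fibrations are the $(\Oo')^\otimes$-monoidal categories of definition \ref{defn : monfib}. Under this instance of the equivalence, the realization of $\Psi$ is once more a pullback of $\Cat_{*//}^\times$, namely $(\Oo')^\otimes \times_{\Cat^\times} \Cat_{*//}^\times$ taken along $\Psi$, again with its Proposition \ref{prop : pullback} structure.

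Now the pasting law for pullbacks furnishes a canonical identification over $(\Oo')^\otimes$
\[ (\Oo')^\otimes \times_{\Cat^\times} \Cat_{*//}^\times \;\simeq\; (\Oo')^\otimes \times_{\F^\otimes}\bigl(\F^\otimes \times_{\Cat^\times} \Cat_{*//}^\times\bigr) \;\simeq\; (\Oo')^\otimes \times_{\F^\otimes} \E^\otimes, \]
where the second step uses the identification of $\E^\otimes$ from the first paragraph. Since an $\Oo'$-monoidal structure on a category is no extra data beyond a coCartesian fibration over $(\Oo')^\otimes$ satisfying the Segal condition, and Proposition \ref{prop : pullback} produces precisely such a fibration in each case, this equivalence identifies the two $\Oo'$-monoidal structures. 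The only point requiring care is that the displayed identification is one of operads \emph{over} $(\Oo')^\otimes$, compatible with the structure maps to $\Cat_{*//}^\times$ — which is exactly what the pasting law provides. I expect this bookkeeping, rather than any genuine mathematical difficulty, to be the main obstacle, reflecting the ``almost tautological'' character of the statement: the whole content is that the microcosm equivalence is defined by pullback against a fixed universal object, so precomposition with $f$ becomes a second pullback square pasted onto the first.

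Finally, for the description of algebras I would invoke the corollary to Proposition \ref{prop : pullback} (the one following Remark \ref{rmk : pullbackopd}), taking its strong $\Oo$-monoidal fibration to be our $\E^\otimes \to \F^\otimes$ and its distinguished algebra to be $f$. This identifies the fiber of $\Alg_{\Oo'/\Oo}(\E) \to \Alg_{\Oo'/\Oo}(\F)$ over $f$ with $\Alg_{\Oo'/\Oo}\bigl((\Oo')^\otimes \times_{\F^\otimes} \E^\otimes\bigr) = \Alg_{\Oo'/\Oo}(\E_f)$, using the identification of $\E_f^\otimes$ established above; this completes the proof.
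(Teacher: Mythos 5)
Your proposal is correct and takes essentially the same route as the paper's own proof: both realize $\E^\otimes$ as the pullback of the universal fibration $(\Cat_{*//})^\times \to \Cat^\times$ furnished by theorem \ref{thm : microcosm}, identify construction (1) with the outer rectangle and construction (2) with the left-hand square of the pasted pullback diagram over $(\Oo')^\otimes \to \F^\otimes \to \Cat^\times$, and reduce the statement about algebras to the pullback description (the corollary following remark \ref{rmk : pullbackopd}). Your step of applying theorem \ref{thm : microcosm} over $(\Oo')^\otimes$ itself is exactly the paper's invocation of the equivalence between $\Oo'$-algebras in $\Cat$ and $\Oo'$-monoidal categories, so the two arguments coincide.
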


\begin{proof}[Proof of \Cref{lm A}]
The second claim about the fiber is immediate from the second construction, so we focus on the first part of the lemma. 

The observation here is that the lax $\Oo$-monoidal functor $\F^\otimes\to \Cat^\times$ associated to $\E^\otimes\to \F^\otimes$ fits in a pullback square $$\xymatrix{\E^\otimes \ar[r] \ar[d] & (\Cat_{*\sslash})^\times\ar[d]\\ \F^\otimes\ar[r] & \Cat^\times}$$
so in particular, we have a bigger pullback diagram 
$$\xymatrix{\E_f^\otimes\ar[r] \ar[d] & \E^\otimes \ar[r] \ar[d] & (\Cat_{*\sslash})^\times\ar[d]\\ (\Oo')^\otimes \ar[r] & \F^\otimes\ar[r] & \Cat^\times}$$

The top left hand corner is the $\E_f^\otimes$ from the first construction, while the bottom horizontal composite by definition classifies $\E_f$ as an $\Oo'$-algebra in $\Cat$ via the lax symmetric monoidal functor $\F^\otimes\to \Cat^\times$. 

The equivalence between $\Oo'$-algebras in $\Cat$ and $\Oo'$-monoidal categories now precisely tells us that $\E_f^\otimes$ is the $\Oo'$-monoidal category corresponding to the $\Oo'$-algebra $\E_f$ in $\Cat$. 
\end{proof}

\begin{proof}[Proof of \Cref{prop : liftcocart}]
We begin by proving the statement replacing $\PrL$ with $\widehat{\Cat}$.

For $\widehat{\Cat}$, we observe that $\C\mapsto \coCart_\C$, as a functor $\Cat\to \widehat{\Cat}$, is lax symmetric monoidal, and therefore induces a canonical lax symmetric monoidal functor $\Cat\to~\Mod_{\coCart_*}(\widehat{\Cat})$, because $*$ is the unit in $\Cat$. 

We are therefore left with two verifications: firstly, that the equivalence $\coCart_* \simeq \Cat$ can be made symmetric monoidal, i.e. that $\coCart_*$ is cartesian monoidal; and secondly, the claim about the pointwise $\Cat$-linear structure on $\coCart_\C$. 

For the first part, we apply \Cref{lm A}  to $\Oo=\Oo'= \mathrm{Comm}, \E^\otimes= \coCart^\times, \F= \Cat^\times$, from which it follows that the monoidal structure on $\coCart_*$ is cartesian, and hence, the correct one.

For the second claim, we again use \Cref{lm A}, with $\Oo= \mathrm{Comm}$ and $\Oo'=$ the operad classifying left modules, together with the observation that we have a map $\coCart^\times\to~\Fun(\Delta^1,\Cat)^\times$ over $\Cat^\times$. We then only have to \emph{check} that the map we obtain on fibers over $(\Oo')^\otimes\xrightarrow{(*,\C)}~\Cat^\times$, which is a priori a map of $\Oo'$-operads, is a map of $\Oo'$-monoidal categories - but this follows from the fact that $\coCart_\C \subset \Cat_{/\C}$ is stable under $-\times K$ for $K\in\Cat$.

We now explain how to replace $\widehat{\Cat}$ with $\PrL$: the Lurie tensor product by definition witnesses $(\PrL)^\otimes$ as a (non-full) sub-operad of $\widehat{\Cat}^\times$, so moving from the latter to the former is just a \emph{property}, of the objects involved (namely that of being presentable) and the functors involved (preserving colimits in each variable). In our situation, the objects are $\coCart_\C$ for some $\C \in \Cat$, and these are indeed presentable, and the morphisms are either $\Cat\times\coCart_\C\to \coCart_\C$ (for the $\Cat$-module structure) or $\coCart_\C\times~\coCart_\D \to~\coCart_{\C\times \D}$ (for the lax symmetric monoidal structure). To verify that these do preserve colimits in each variable, one can either use un/straightening, or the results from \Cref{section : straight}. Let us indicate how the latter verification works: \Cref{prop : colim} allows us to reduce to the same statements for $\Cat_{/\C}, \Cat_{/\D}$. For these, one may use that colimits in slice categories are underlying, so we are reduced to the same statement for $\Cat$, but now for any $E\in\Cat, E\times -$ has a right adjoint and hence preserves colimits. 

All in all, we get a lift to $\Mod_\Cat(\PrL)$. 
\end{proof}
In order to state and prove \Cref{thm  : metacosm}, we have to define a symmetric monoidal structure on $\C\mapsto \Fun(\C,\Cat)$. Firstly, by \cite[Proposition 4.8.1.3]{HA} (see \cite[Remark 4.8.1.8]{HTT}, applied to $\mathcal K = \emptyset, \mathcal K'= $ all simplicial sets), there is a canonical symmetric monoidal structure on the functor $\Cat\to \PrL, \C\mapsto \Fun(\C\op,\Ss)$. 

Furthermore, $\C\mapsto\C\op$ is product preserving and so uniquely symmetric monoidal as a functor $\Cat\to \Cat$ (see also \Cref{lm:lfibcart}). The functor $-\otimes \Cat: \PrL\to \Mod_\Cat(\PrL)$ also has a canonical symmetric monoidal structure. 
\begin{defn}\label{defn : canday}
We define the canonical symmetric monoidal structure on $$\Cat\to \Mod_\Cat(\PrL), \C\mapsto \Fun(\C,\Cat)$$ as the composite  $$\Cat\xrightarrow{\C\mapsto \C\op} \Cat\xrightarrow{\D\mapsto \Fun(\D\op,\Ss)}\PrL\xrightarrow{-\otimes\Cat}\Mod_\Cat(\PrL)$$
followed by the natural equivalence $\Fun(\C,\Cat)\simeq\Fun((\C\op)\op,\Ss)\otimes\Cat$. We call it the canonical symmetric monoidal
structure on $\C\mapsto \Fun(\C,\Cat)$. 
\pend\end{defn}
Here the symmetric monoidal functor $\Cat\to \PrL, \D\mapsto \Fun(\D\op,\Ss) $ is the one from \cite[Remark 4.8.1.8]{HA} with (in the notation from \textit{loc. cit.}) $\mathcal K=\empty,\mathcal K'=$ all diagrams. This is the symmetric monoidal left adjoint to the functor described in \cite[Corollary 4.8.1.4]{HA}. Note that, forgetting the symmetric monoidal structure, the functoriality here is the one obtained from the universal property of the presheaf category which, by naturality of the Yoneda embedding \cite[Theorem 8.1]{HHLN2}\footnote{As in \Cref{rmk : naturalyoneda}, see also \cite{yoneda} for a discussion.}, is the same as the functoriality obtained by taking left adjoints on the natural contravariant functoriality. 

Finally, the equivalence $\Fun(\C,\Ss)\otimes\Cat\simeq \Fun(\C,\Cat)$ is the one from, e.g. \cite[Theorem 4.2(2)]{HR} which we sketch here for convenience of the reader: as the target is naturally a $\Cat$-module, the map $\Fun(\C,\Ss)\to \Fun(\C,\Cat)$ induces a map $\Fun(\C,\Ss)\otimes~\Cat\to~\Fun(\C,\Cat)$ which can be checked to be an equivalence pointwise, using \cite[Proposition 4.8.1.17]{HA}. 

\begin{rmk}\label{rmk:content}
    We make a remark about the choice of symmetric monoidal structure on the functor $\C\mapsto \Fun(\C,\Cat)$. An alternative is to consider the lax slice $\widehat{\Cat}_{\sslash \Cat} \to \widehat{\Cat}$ as a symmetric monoidal coCartesian fibration using the cartesian monoidal structure on both source and target. This fibration, as a \emph{cartesian} fibration, classifies the contravariant functoriality of $\C\mapsto \Fun(\C,\Cat)$ together with its ``obvious'' lax symmetric monoidal structure. 

    As a coCartesian fibration, as mentioned above, it classifies the same functoriality as the ``free ($\Cat$-linear) cocompletion'' functoriality, and for this choice of a symmetric monoidal structure, \Cref{thm : metacosm} would be essentially obvious, since $\widehat{\Cat}_{\sslash \Cat}$ is equivalent to $\coCart$ and their cartesian symmetric monoidal structures therefore are as well. The content\footnote{In some sense, the main ``non-formal'' point of the present paper} of \Cref{thm : metacosm}, as stated precisely below, is therefore that the monoidal structure on $\widehat{\Cat}_{\sslash \Cat}$ we just described agrees with the one arising from \Cref{defn : canday}. One can see this as a symmetric monoidal refinement of the previous discussion, or equivalently, of \cite[Theorem 8.1]{HHLN2}.
\pend\end{rmk}

\begin{obs}
The forgetful functor $\Mod_\Cat(\PrL)\to \PrL\to \widehat{\Cat}$ is canonically lax symmetric monoidal, so we can also view $\C\mapsto \Fun(\C,\Cat)$ as a lax symmetric monoidal functor $\Cat\to\widehat{\Cat}$. 
\pend\end{obs}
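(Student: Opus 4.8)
The plan is to factor the functor $\C\mapsto \Fun(\C,\Cat)$ into $\widehat{\Cat}$ as the strong symmetric monoidal functor into $\Mod_\Cat(\PrL)$ constructed in Definition \ref{defn : canday} followed by the forgetful composite $\Mod_\Cat(\PrL)\to \PrL\to \widehat{\Cat}$, and to check that each of the two forgetful functors is lax symmetric monoidal. The observation then follows formally, since a composite of lax symmetric monoidal functors is lax symmetric monoidal and precomposition by a strong symmetric monoidal functor preserves this.

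For the first forgetful functor, I would use that $\Cat$ is a commutative algebra object of $\PrL$ under the Lurie tensor product, so that $\Mod_\Cat(\PrL)$ carries the relative tensor product $-\otimes_\Cat-$ of \cite[Theorem 4.5.2.1]{HA}. The forgetful functor is the right adjoint of the strong symmetric monoidal base-change functor $\mathcal M\mapsto \Cat\otimes\mathcal M$, and hence is canonically lax symmetric monoidal by the standard fact that right adjoints of symmetric monoidal functors inherit such a structure (\cite[Corollary 7.3.2.7]{HA}); explicitly, the lax structure map is the canonical arrow $M\otimes N\to M\otimes_\Cat N$ and the unit map is the algebra unit $\Ss\to\Cat$.

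For the second forgetful functor, I would simply invoke what was already recorded in the proof of Proposition \ref{prop : liftcocart}: the Lurie tensor product exhibits $(\PrL)^\otimes$ as a (non-full) sub-operad of $\widehat{\Cat}^\times$, and this is precisely the assertion that $\PrL\to \widehat{\Cat}$ is lax symmetric monoidal (with lax structure map the universal comparison $\mathcal C\times\mathcal D\to\mathcal C\otimes\mathcal D$ from \cite[Section 4.8]{HA}). Composing the two, and then precomposing with the functor of Definition \ref{defn : canday}, produces the desired lax symmetric monoidal functor $\Cat\to\widehat{\Cat}$. The statement carries essentially no difficulty; the only points requiring care are the compatibility of monoidal units along the composite (the unit $*$ of $\widehat{\Cat}$ maps to $\Ss$ and then to $\Cat$), and the usual universe bookkeeping for the large categories $\PrL$ and $\widehat{\Cat}$, which is harmless since every cited result is universe-independent.
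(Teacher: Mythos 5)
Your proof is correct and follows essentially the same route the paper leaves implicit: the observation is stated without proof precisely because it reduces to the two standard facts you cite, namely that the forgetful functor $\Mod_\Cat(\PrL)\to \PrL$ is lax symmetric monoidal as the right adjoint of the strong symmetric monoidal free-module functor $\Cat\otimes -$ (\cite[Corollary 7.3.2.7]{HA}), and that $(\PrL)^\otimes$ sits inside $\widehat{\Cat}^\times$ as a (non-full) sub-operad — a fact the paper itself invokes in the proof of proposition \ref{prop : liftcocart}. Composing these with the symmetric monoidal functor of definition \ref{defn : canday} is exactly the intended argument, and your checks on unit constraints and universes are the right (if routine) points of care.
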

We can now prove \Cref{thm : metacosm} in the following form: 
\begin{cor}
There is a $\Cat$-linearly symmetric monoidal natural transformation of lax symmetric monoidal functors $\Cat\to \Mod_{\Cat}(\PrL)$ of the form $$\Fun(\C,\Cat)\to \coCart_\C$$
such that for any category $\C$, the corresponding functor is the unstraightening equivalence. 

In particular, these two functors are $\Cat$-linearly symmetric monoidally equivalent. 
\end{cor}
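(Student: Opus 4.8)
The plan is to construct the comparison $\tau\colon \Fun(\C,\Cat)\to\coCart_\C$ by transporting the transformation $\C\op\to\coCart_\C$ of Corollary \ref{cor : yonmon} across the universal properties that \emph{define} the source functor in Definition \ref{defn : canday}. Recall that $\C\mapsto\Fun(\C,\Cat)$ is there built as the composite $\Cat\xrightarrow{(-)\op}\Cat\xrightarrow{\mathcal P}\PrL\xrightarrow{-\otimes\Cat}\Mod_\Cat(\PrL)$, where $\mathcal P(\D)=\Fun(\D\op,\Ss)$. Each of the last two functors is symmetric monoidal and carries a universal property: the free $\Cat$-module functor $-\otimes\Cat\colon\PrL\to\Mod_\Cat(\PrL)$ is left adjoint to the forgetful functor, and the presheaf functor $\mathcal P$ is the symmetric monoidal free cocompletion, whose symmetric monoidal universal property is \cite[Proposition 4.8.1.3.]{HA}. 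I would package these into one statement: for a $\Cat$-linear, pointwise presentable lax symmetric monoidal functor $H\colon\Cat\to\Mod_\Cat(\PrL)$, the space of $\Cat$-linear symmetric monoidal natural transformations $\Fun(-,\Cat)\to H$ is equivalent, by restriction along the co-Yoneda embedding, to the space of symmetric monoidal natural transformations $(-)\op\to UH$ into the underlying $\widehat{\Cat}$-valued functor.

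Granting this, the construction of $\tau$ is immediate: Proposition \ref{prop : liftcocart} supplies exactly such an $H$, namely $\C\mapsto\coCart_\C$ with its $\Cat$-linear, pointwise presentable lax symmetric monoidal structure, and Corollary \ref{cor : yonmon} supplies the element $(-)\op\to\coCart_{(-)}$ of the right-hand side; the corresponding $\tau$ is then $\Cat$-linear and symmetric monoidal by construction. The two extension steps have a transparent pointwise meaning: extend along the Yoneda embedding $\C\op\hookrightarrow\mathcal P(\C\op)=\Fun(\C,\Ss)$ by colimits, then tensor with $\Cat$. The seed transformation is, under straightening, precisely the co-Yoneda embedding $x\mapsto\Map_\C(x,-)$ landing in $\Fun(\C,\Ss)\subset\Fun(\C,\Cat)$.

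It then remains to identify $\tau_\C$ with the unstraightening equivalence for each fixed $\C$, which I would do by uniqueness of colimit-preserving $\Cat$-linear extensions. Both $\tau_\C$ and the unstraightening equivalence $\Fun(\C,\Cat)\xrightarrow{\sim}\coCart_\C$ are $\Cat$-linear and colimit-preserving — unstraightening is an equivalence, and it is $\Cat$-linear by the pointwise comparison in Proposition \ref{prop : liftcocart} — and both send the corepresentable functor $\Map_\C(x,-)$ to the representable coCartesian fibration $\C_{x/}\to\C$: for $\tau_\C$ this is the defining property of the seed, for unstraightening it is the content of Corollary \ref{cor : yonmon}. Since $\Fun(\C,\Cat)=\mathcal P(\C\op)\otimes\Cat$ is generated under colimits and the $\Cat$-action by the corepresentables, a $\Cat$-linear colimit-preserving functor out of it is determined by its restriction to them; hence $\tau_\C$ coincides with unstraightening and is in particular an equivalence. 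As $\tau$ is then a pointwise equivalence of lax symmetric monoidal functors, it is an equivalence in the category of such, yielding the asserted $\Cat$-linearly symmetric monoidal equivalence.

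The main obstacle I anticipate is the first paragraph: assembling the pointwise universal properties of $\mathcal P$ and $-\otimes\Cat$ into a genuinely \emph{coherent} universal property of $\Fun(-,\Cat)$ at the level of $\Cat$-linear symmetric monoidal natural transformations of functors $\Cat\to\Mod_\Cat(\PrL)$, i.e.\ making the extension simultaneously natural in $\C$ and compatible with all the symmetric monoidal coherence data. The symmetric monoidality of $\mathcal P$ (via \cite[Proposition 4.8.1.3.]{HA}) and of the free $\Cat$-module functor handle the monoidal bookkeeping, but verifying that the resulting transformation is literally the one classified by Corollary \ref{cor : yonmon}, rather than merely agreeing with it pointwise, is where the care is needed; this is exactly the kind of coherence that the author's framework — treating lax monoidal functors as maps of operads and feeding everything through the microcosm Theorem \ref{thm : microcosm} — is designed to make tractable.
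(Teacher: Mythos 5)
Your proposal is correct and takes essentially the same route as the paper's proof: seed the transformation with Corollary \ref{cor : yonmon}, extend it through the symmetric monoidal (partial) left adjoints $\Fun((-)\op,\Ss):\Cat\to\PrL$ and $-\otimes\Cat:\PrL\to\Mod_\Cat(\PrL)$, and then identify the result pointwise with unstraightening by uniqueness of colimit-preserving $\Cat$-linear extensions, concluding since equivalences of symmetric monoidal functors are detected pointwise. One minor correction: the $\Cat$-linearity of the unstraightening equivalence is Lemma \ref{lm : catlinear} in the appendix, not Proposition \ref{prop : liftcocart} (which concerns the module structure on $\coCart_\C$ itself), and the coherence concern in your final paragraph is exactly what the symmetric monoidality of these adjoints resolves, as in the paper.
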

\begin{proof}
This follows essentially from \Cref{cor : yonmon}, the universal property of presheaf-categories, and the fact that $\Fun(\C,\Ss)\otimes\Cat \simeq \Fun(\C,\Cat)$.

In more detail, we have just explained how $\C\mapsto \coCart_\C$ lifts to a symmetric monoidal functor $\Cat\to \Mod_\Cat(\PrL)$, and the forgetful functor $\PrL\to \widehat{\Cat}$ has a partial symmetric monoidal left adjoint defined on small categories, and given there by $\Fun((-)\op,\Ss)$. 

$\Mod_\Cat(\PrL)\to \PrL$ has a symmetric monoidal left adjoint given by $\Cat\otimes -$, and $\Fun((-)\op,\Ss)\otimes\Cat\simeq \Fun((-)\op,\Cat)$, symmetric monoidally by definition. 

It therefore follows that the symmetric monoidal transformation $\C\op\to \coCart_\C$ from \Cref{cor : yonmon}  has a unique $\Cat$-linear colimit-preserving extension $\Fun(\C,\Cat)\to \coCart_\C$, which has a canonical $\Cat$-linear symmetric monoidal structure. 

It is now easy to see that the underlying functor $\Fun(\C,\Cat)\to \coCart_\C$ is just the unstraightening equivalence : it is a colimit-preserving $\Cat$-linear functor $\Fun(\C,\Cat)\to~\coCart_\C$ whose restriction along the Yoneda embedding $\C\op\to \Fun(\C,\Ss)\to \Fun(\C,\Cat)$ agrees with the restriction of the unstraightening equivalence.
The unstraightening equivalence also has these properties (it is an equivalence, so it preserves colimits, and it is $\Cat$-linear - this is folklore, but see \Cref{lm : catlinear} for a proof), and this implies that they are equivalent by the universal property of presheaves and the equivalence $\Fun(\C,\Cat)\simeq \Fun(\C,\Ss)\otimes\Cat$. 

Because equivalences of symmetric monoidal functors are pointwise, i.e. the evaluation functors at $\C, \C\in\Cat$, are jointly conservative as functors $\Fun^\otimes(\Cat, \Mod_\Cat(\PrL))\to~\widehat{\Cat}$, the rest of the claim follows. 
\end{proof}
\begin{rmk}
A variation on the fact that $\Cat$ has a discrete anima of automorphisms shows that there can be only one natural un/straightening equivalence, see e.g. \cite[Appendix A]{HHLN2}. In particular, the un/straightening equivalence that we obtain here is equivalent to the one from \cite[Appendix A]{gepnerhaugsengnik}.  Note that our construction is, however, not independent of theirs and relies on work which in turn relies on \cite{gepnerhaugsengnik} - however, what we mean here is that the \emph{a priori} different naturality structure we obtain is in fact the same. 
\pend\end{rmk}

\section{The macrocosmic monoidal Grothendieck construction}\label{section : macro}
If $\C$ is symmetric monoidal (or more generally $\Oo$-monoidal), the symmetric monoidal structures on the two functors $$\D\mapsto \Fun(\D,\Cat), \D\mapsto \coCart_\D$$ induce symmetric monoidal structures on $\Fun(\C,\Cat)$ and $\coCart_\C$ respectively, and the symmetric monoidal structure on the natural equivalence between the two makes the unstraightening equivalence $$\Fun(\C,\Cat)\simeq\coCart_\C$$ symmetric monoidal. 

An important question is a description of these symmetric monoidal structures. This is the content of this section.

We begin with the easier side of our analysis (see \cite[Remark 4.8.1.13]{HA}): 
\begin{lm}\label{lm : abstractisday}
Let $\C$ be an $\Oo$-monoidal category. The $\Oo$-monoidal structure induced on $\Fun(\C,\Cat)$ is the $\Oo$-monoidal Day convolution structure.
\end{lm}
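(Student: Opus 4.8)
The plan is to unwind the canonical symmetric monoidal structure of definition \ref{defn : canday} one functor at a time, tracking what each stage does to $\C$ regarded as an $\Oo$-algebra in $\Cat$, and to recognise Day convolution as the output. Recall that the structure is the composite of the symmetric monoidal functors $\Cat\xrightarrow{(-)\op}\Cat\xrightarrow{\mathcal P}\PrL\xrightarrow{-\otimes\Cat}\Mod_\Cat(\PrL)$, where $\mathcal P=\Fun((-)\op,\Ss)$, followed by the natural equivalence $\Fun(\C,\Cat)\simeq\Fun((\C\op)\op,\Ss)\otimes\Cat$. Since a symmetric monoidal functor carries $\Oo$-algebras to $\Oo$-algebras functorially, the induced $\Oo$-monoidal structure on $\Fun(\C,\Cat)$ is obtained by feeding the $\Oo$-algebra $\C$ through these three functors in turn; I would therefore identify the output of each stage separately.

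The content lies in the middle stage $\mathcal P\colon\Cat\to\PrL$. By \cite[Proposition 4.8.1.3.]{HA} it is symmetric monoidal, and by \cite[Remark 4.8.1.13.]{HA} the symmetric monoidal structure it produces on $\mathcal P(\D)$, for $\D$ a symmetric monoidal category, is exactly the Day convolution structure (for which the Yoneda embedding is symmetric monoidal). Applying this with $\D=\C\op$ --- whose symmetric monoidal structure is transported from $\C$ along the product-preserving, hence canonically symmetric monoidal, functor $(-)\op$ --- yields the Day convolution structure on $\mathcal P(\C\op)=\Fun(\C,\Ss)$; the double $\op$ cancels and is precisely the bookkeeping that matches Lurie's presheaf convention $\mathcal P(-)=\Fun((-)\op,\Ss)$ with the covariant functor category appearing in the statement. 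To upgrade the symmetric monoidal (i.e.\ $\Oo=\mathrm{Comm}$) statement of \cite[Remark 4.8.1.13.]{HA} to an arbitrary operad $\Oo$, I would observe that this identification is natural in $\D$, hence is an equivalence of symmetric monoidal functors $\Cat\to\PrL$, and that such an equivalence induces an equivalence on $\Oo$-algebras for every $\Oo$ (as recalled in the introduction).

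It remains to absorb the outer functor $-\otimes\Cat$ together with the equivalence $\Fun(\C,\Ss)\otimes\Cat\simeq\Fun(\C,\Cat)$. For this I would invoke the coefficient-change property of Day convolution: for a small monoidal category $\C$ and a presentable symmetric monoidal category $\D$, the canonical equivalence $\mathcal P(\C\op)\otimes\D\simeq\Fun(\C,\D)$ is symmetric monoidal when the left factor carries Day convolution, the product on the left is the Lurie tensor product, and the right-hand side carries the $\D$-valued Day convolution. Taking $\D=\Cat$ with its cartesian structure then identifies $\Fun(\C,\Ss)\otimes\Cat$ with the $\Cat$-valued Day convolution on $\Fun(\C,\Cat)$. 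It is important that this is carried out inside $\Mod_\Cat(\PrL)$: it is the relative tensor $\otimes_\Cat$ there that computes the $\Cat$-valued convolution product, whereas the absolute tensor in $\PrL$ would retain a spurious second $\Cat$-factor. This is the structural reason the whole comparison is phrased at the level of $\Cat$-modules.

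I expect the last step to be the main obstacle: pinning down that the equivalence $\Fun(\C,\Ss)\otimes\Cat\simeq\Fun(\C,\Cat)$ really intertwines the Lurie tensor of Day convolution with the $\Cat$-valued Day convolution, $\Oo$-monoidally and $\Cat$-linearly rather than merely as an equivalence of underlying symmetric monoidal categories. If the cited base-change compatibility is unavailable in the precise form needed, the fallback is to characterise the Day convolution structure by its universal property --- as the $\Oo$-monoidal structure whose tensor preserves colimits in each variable and for which the Yoneda embedding $\C\to\Fun(\C,\Cat)$ is $\Oo$-monoidal --- and to verify these two properties for the induced structure. The colimit-preservation is automatic, since the structure is induced from functors valued in $\PrL$ and $\Mod_\Cat(\PrL)$ (where the structure maps preserve colimits in each variable), and the monoidality of the Yoneda embedding is built into \cite[Remark 4.8.1.13.]{HA}; the passage to general $\Oo$ again reduces to the commutative case through the symmetric monoidal functoriality.
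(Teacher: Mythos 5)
Your ``fallback'' is in fact the paper's proof, and it should be your main argument rather than a contingency. The paper argues exactly as follows: both the induced structure and the Day convolution structure are \emph{presentably $\Cat$-linearly} $\Oo$-monoidal, so by the universal property of $\Fun(\C,\Cat)\simeq\Fun(\C,\Ss)\otimes\Cat$ as the free presentable $\Cat$-module on $\C\op$, everything reduces to checking that the Yoneda embedding $\C\op\to\Fun(\C,\Cat)$ is $\Oo$-monoidal for the Day convolution structure, with $\C\op$ carrying the structure induced by the symmetric monoidality of $(-)\op$; that structure on $\C\op$ is forced (it is the usual opposite structure, since $\Cat$ is cartesian and so $(-)\op$ has a unique symmetric monoidal structure), and the monoidal Yoneda statement is the general-operad variant of \cite[Propositions 4.8.1.10 and 4.8.1.12]{HA}. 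One correction to how you state the characterisation: ``colimits in each variable $+$ monoidal Yoneda'' does not suffice for uniqueness on its own. Since $\Fun(\C,\Cat)$ is not a presheaf category on a small category, its universal property in $\PrL$ is only available through the $\Cat$-module structure, so $\Cat$-linearity must be part of the characterisation --- you correctly emphasise the role of $\Mod_\Cat(\PrL)$ elsewhere, but it needs to appear in the uniqueness statement itself.

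Your primary, stage-by-stage route has a genuine gap beyond the coefficient-change step you flag. The upgrade of \cite[Remark 4.8.1.13]{HA} from $\mathrm{Comm}$ to a general operad $\Oo$ ``by naturality in $\D$'' is not formal: Day convolution for an $\Oo$-monoidal category is a construction performed algebra-by-algebra, not (a priori) the value of a second symmetric monoidal functor $\Cat\to\PrL$ that you could compare with $\mathcal P$ as symmetric monoidal functors. Agreement of the two constructions on commutative algebras (i.e.\ on symmetric monoidal categories) does not imply agreement of the induced $\Oo$-monoidal structures for every $\Oo$; to get the latter you need either to promote Day convolution to such a functor and identify it with $\mathcal P^\otimes$ at the level of operad maps $\Cat^\times\to(\PrL)^\otimes$ --- which is essentially what is being proved --- or to argue operad-by-operad via the universal property. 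The same applies to the compatibility of $\mathcal P(\C\op)\otimes\Cat\simeq\Fun(\C,\Cat)$ with the two convolution structures. In other words, each stage of your unwinding secretly invokes the universal-property argument, so nothing is gained by the decomposition: one may as well apply that argument once, directly to the composite structure, which is what the paper does.
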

\begin{proof}
Because both this structure and the Day convolution structure are presentably $\Cat$-linearly $\Oo$-monoidal, it suffices to prove that the Yoneda embedding $\C\op\to \Fun(\C,\Cat)$ is $\Oo$-monoidal, where the target has Day convolution and the source has the monoidal structure coming from the symmetric monoidality of $\Cat\to \Cat, \D\mapsto \D\op$. 

But this symmetric monoidality comes from the fact that this functor preserves products, cf. \Cref{lm:lfibcart}. In particular, the structure on $\C\op$ is just the usual $\op$ of $\Oo$-monoidal structures, and the claim boils down to the claim that for an $\Oo$-monoidal category, the Yoneda embedding $\C\to \Fun(\C\op,\Ss)$ is $\Oo$-monoidal. 

This is \cite[Proposition 4.8.1.10]{HA} - more specifically the variant of \cite[Proposition 4.8.1.12]{HA}, but with a more general operad than $\mathrm{Comm}$. 
\end{proof}
We now move on to the $\coCart_\C$ part of the picture. Suppose $\C$ is an $\Oo$-monoidal category - then $\Cat_{/\C}$ is canonically $\Oo$-monoidal, and our goal will be to identify $(\coCart_\C)^\otimes \to~(\Cat_\C)^\otimes$ as an \emph{explicit} non-full sub-$\Oo$-operad inclusion. 

The key lemma for this will be \Cref{lm A} from the previous section -  what we prove here is a refinement of our earlier proof that the monoidal structure of $\coCart_*$ is the cartesian monoidal one. We now state and prove the main result of this section : 

\begin{lm}\label{thm : macroprecise}
Let $\C$ be an $\Oo$-monoidal category for some operad $\Oo$. The $\Oo$-monoidal structure on $\coCart_\C$ induced by \Cref{defn : cancocart} witnesses it as a non-full sub-$\Oo$-operad of $(\Cat_{/\C})^\otimes$ where we can describe the multi-mapping anima as follows: 

Given $T_1, ..., T_n, T_\infty \in \Oo^\otimes_{\langle 1\rangle}$ and an operation $e: T_1\oplus ...\oplus T_n\to T_\infty$, letting $e_! :~\C_{T_1}\times~... \times~\C_{T_n}\to~\C_{T_\infty}$ denote the corresponding operation, if $\D_i\to \C_{T_i}$ is a coCartesian fibration for each $i\in~\{1,...,n,\infty\}$, then $$\map_{\coCart_\C^\otimes}(\D_1,...,\D_n; \D_\infty)_e \to \map_{(\Cat_{/\C})^\otimes}(\D_1,...,\D_n; \D_\infty)_e \simeq \map_{\Cat_{/\prod_{i=1}^n\C_{T_i}}}(\prod_{i=1}^n\D_i, \prod_{i=1}^n\C_{T_i}\times_{\C_{T_\infty}}\D_\infty) $$ 
witnesses the former as the subanima of the latter spanned by the components corresponding to functors of coCartesian fibrations over $\prod_{i=1}^n\C_{T_i}$. In other words, functors $\prod_{i=1}^n\D_i\to~\D_\infty$ lying over $e_! : \prod_{i=1}^n\C_{T_i}\to \C_{T_\infty}$ which preserve coCartesian morphisms in each variable. 
\end{lm}
It is worth spelling out the case of $\Oo=\mathrm{Comm}$ for intuition: 
\begin{cor}
Let $\C$ be a symmetric monoidal category. The  symmetric monoidal structure on $\coCart_\C$ induced by \Cref{defn : cancocart} witnesses it as a non-full sub-operad of $(\Cat_{/\C})^\otimes$, where we can describe the multi-mapping anima as follows: 

Given $n\geq 0$ and letting $e:\langle n\rangle \to \langle 1\rangle$ denote the unique active morphism and $\otimes :~\C^n\to~\C$ denote the corresponding tensor product, if $\D_i\to \C$ is a coCartesian fibration for $i\in\{1,...,n,\infty\}$, then 
$$\map_{\coCart_\C^\otimes}(\D_1,...,\D_n; \D_\infty)_e \to \map_{(\Cat_{/\C})^\otimes}(\D_1,...,\D_n; \D_\infty)_e \simeq \map_{\Cat_{/\C^n}}(\prod_{i=1}^n\D_i, \C^n\times_\C\D_\infty) $$ 
witnesses the former as the subanima of the latter spanned by the components corresponding to functors of coCartesian fibrations over $\C^n$. 

In other words, functors $\prod_{i=1}^n\D_i\to \D_\infty$ lying over $\otimes : \C^n\to \C$ which preserve coCartesian morphisms in each variable. 
\end{cor}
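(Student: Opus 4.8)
The plan is to realize both $(\coCart_\C)^\otimes$ and $(\Cat_{/\C})^\otimes$ as pullbacks over $\Oo^\otimes$ by means of lemma \ref{lm A}, and then read the multi-mapping spaces directly off the pullback. First I would observe that $\Fun(\Delta^1,\Cat)^\times\to\Cat^\times$ (via $ev_1$, with the cartesian monoidal structure on both sides) is a symmetric monoidal coCartesian fibration: $ev_1$ is a coCartesian fibration whose coCartesian edges are the squares with invertible top arrow, these are closed under products, and $ev_1$ preserves products. Hence lemma \ref{lm A} applies with its ``$\Oo$'' being $\mathrm{Comm}$, its ``$\Oo'$'' being our $\Oo$, and $f=\C$, and identifies the comma $\Oo$-monoidal structure as $(\Cat_{/\C})^\otimes\simeq\Oo^\otimes\times_{\Cat^\times}\Fun(\Delta^1,\Cat)^\times$. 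Applying the same lemma to the symmetric monoidal coCartesian fibration $\coCart^\times\to\Cat^\times$ (proposition \ref{prop : cocartcart}, definition \ref{defn : cancocart}) gives $(\coCart_\C)^\otimes\simeq\Oo^\otimes\times_{\Cat^\times}\coCart^\times$ for the structure of definition \ref{defn : cancocart}. The inclusion $\coCart\subset\Fun(\Delta^1,\Cat)$ over $\Cat$, pulled back along $\C:\Oo^\otimes\to\Cat^\times$, then supplies the comparison morphism of $\Oo$-operads $(\coCart_\C)^\otimes\to(\Cat_{/\C})^\otimes$ covering the identity on $\Oo^\otimes$.

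Next I would compute the multi-mapping spaces of a pullback operad of the shape $\Oo^\otimes\times_{\Cat^\times}\Q^\times$, where $\Q$ is either $\coCart$ or $\Fun(\Delta^1,\Cat)$, cartesian monoidal over $\Cat^\times$ via $ev_1$. For an operation $e:T_1\oplus\dots\oplus T_n\to T_\infty$ and objects $\D_i$ over $T_i$, writing $\D$ for the corresponding object over $T=T_1\oplus\dots\oplus T_n$, the multi-mapping space over $e$ is by definition the fiber over $e$ of $\map(\D,\D_\infty)\to\map_{\Oo^\otimes}(T,T_\infty)$. The pullback description of mapping spaces then identifies this with the fiber over $e_!$ of $\map_{\Q^\times}(\D,\D_\infty)\to\map_{\Cat^\times}((\C_{T_i})_i,\C_{T_\infty})$, and since $\Q^\times$ is cartesian monoidal over $\Cat^\times$, this is exactly the space of morphisms $\prod_i\D_i\to\D_\infty$ in $\Q$ lying over $e_!:\prod_i\C_{T_i}\to\C_{T_\infty}$. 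For $\Q=\Fun(\Delta^1,\Cat)$ such a morphism is a square with prescribed bottom $e_!$, which by the universal property of the pullback is an object of $\map_{\Cat_{/\prod_i\C_{T_i}}}(\prod_i\D_i,\prod_i\C_{T_i}\times_{\C_{T_\infty}}\D_\infty)$, giving the stated identification of the target; for $\Q=\coCart$ the same morphisms are those additionally preserving coCartesian edges.

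Finally I would match ``preserves coCartesian edges over $\prod_i\C_{T_i}$'' with ``preserves coCartesian edges in each variable''. The coCartesian edges of $\prod_i\D_i\to\prod_i\C_{T_i}$ are precisely the products of coCartesian edges, and any such edge factors as a composite of edges that are coCartesian in a single variable and identities in the others; since coCartesian edges are closed under composition over composable base edges, a functor preserves all of them if and only if it preserves them in each variable. As preservation of coCartesian edges is a \emph{property} of a functor, it cuts out a union of path components, so on multi-mapping spaces the comparison map is a componentwise inclusion; together with the fact that on objects it is the inclusion of coCartesian fibrations among all objects of $\Cat_{/\C}$, this exhibits $(\coCart_\C)^\otimes$ as a non-full sub-$\Oo$-operad of $(\Cat_{/\C})^\otimes$ with the asserted multi-mapping spaces.

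I expect the main obstacle to be the bookkeeping in the pullback mapping-space computation: correctly unwinding the cartesian-operad multimorphisms in $\Q^\times$ together with their image in $\Cat^\times$, and making sure that lemma \ref{lm A} genuinely supplies the comma $\Oo$-monoidal structure on $\Cat_{/\C}$, so that the map being analyzed really is the comparison morphism induced by the canonical structure of definition \ref{defn : cancocart} rather than some a priori different $\Oo$-monoidal refinement. The remaining combinatorial identification of ``all coCartesian edges'' with ``coCartesian in each variable'' is routine once the product edges are described explicitly.
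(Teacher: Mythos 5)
Your proposal is correct and takes essentially the same approach as the paper's proof of theorem \ref{thm : macroprecise} (of which this corollary is the case $\Oo=\mathrm{Comm}$): apply lemma \ref{lm A} to both $\coCart^\times\to\Cat^\times$ and $\Fun(\Delta^1,\Cat)^\times\to\Cat^\times$ to get the two pullback squares over $\Oo^\otimes$, then read the multi-mapping spaces off the non-full suboperad inclusion $\coCart^\times\subset\Fun(\Delta^1,\Cat)^\times$. The one subtlety the paper flags that you leave implicit --- that $\coCart\to\Fun(\Delta^1,\Cat)$ is a morphism over $\Cat$ (of cartesian fibrations) but \emph{not} of coCartesian fibrations --- is harmless here, since your comparison map only uses that it is a map of operads over $\Cat^\times$.
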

\begin{proof}[Proof of \Cref{thm : macroprecise}]
By definition, \Cref{thm : metacosm} induces an $\Oo$-monoidal structure on $\coCart_\C$ in the following way : $\C$ is an $\Oo$-algebra in $\Cat$, and $\coCart_\bullet : \Cat\to \widehat{\Cat}$ is obtained from \Cref{thm : microcosm} by straightening the symmetric monoidally coCartesian fibration $\coCart\to \Cat$. 

Up to passing to a bigger universe, we are therefore exactly in the situation of \Cref{lm A}. In particular, we have a pullback square of the form $$\xymatrix{\coCart_\C^\otimes \ar[r]\ar[d] & \coCart^\times \ar[d]\\ \Oo^\otimes \ar[r] & \Cat^\times}$$
Note that both $\coCart,\Cat$ are cartesian monoidal. 

We similarly have, by applying the same argument to $\Fun(\Delta^1,\Cat)$  (which is easily seen to be cartesian monoidal) in place of $\coCart$, a pullback square: 
$$\xymatrix{(\Cat_{/\C})^\otimes \ar[r]\ar[d] & \Fun(\Delta^1,\Cat)^\times \ar[d]\\ \Oo^\otimes \ar[r] & \Cat^\times}$$
Recall from \Cref{rmk:notcocart} that the map $\coCart\to \Fun(\Delta^1,\Cat)$ is not a morphism of coCartesian fibrations, but it is nonetheless a morphism over $\Cat$ (in fact it is a morphism of \emph{cartesian} fibrations over $\Cat$) so that we also have a commutative triangle : 

$$\xymatrix{\coCart^\times \ar[r] \ar[d] & \Fun(\Delta^1,\Cat)^\times \ar[dl]\\
\Cat^\times}$$

Because $\coCart\subset \Fun(\Delta^1,\Cat)$ preserves products and is a non-full subcategory, $\coCart^\times\subset \Fun(\Delta^1,\Cat)^\times$ is a non-full suboperad. 

Explicitly, given coCartesian fibrations $\D_1\to\C_1, ..., \D_n\to \C_n, \D_\infty\to \C_\infty$, which we will abusively denote by only referring to their total category,  the inclusion $$\map_{\coCart^\times}(\D_1,...,\D_n; \D_\infty)\subset \map_{\Fun(\Delta^1,\Cat)^\times}(\D_1,...,\D_n; \D_\infty)$$ is the inclusion of components corresponding to morphisms $\D_1\times ... \times\D_n\to \D_\infty$ lying over a morphism $\C_1\times ... \times\C_n\to \C_\infty$ and sending (factor-wise) coCartesian edges to coCartesian edges.  

The two pullback squares above and the commutative triangle induce the following pullback square $$\xymatrix{\coCart_\C^\otimes \ar[r]\ar[d] & (\Cat_{/\C})^\otimes \ar[d]\\ \coCart^\times \ar[r] & \Cat^\times}$$

The description of $\coCart^\times \subset \Cat^\times$ above clearly specializes to the claimed description of $\coCart_\C^\otimes\subset (\Cat_{/\C})^\otimes$. 
\end{proof}

We summarize the work of this section in : 
\begin{cor}\label{cor : macromongro}
Let $\Oo$ be an operad and $\C$ an $\Oo$-monoidal category. 

The non-full sub-$\Oo$-operad $\coCart_\C^\otimes \subset (\Cat_{/\C})^\otimes$ is an $\Oo$-monoidal category, and this $\Oo$-monoidal structure is the one inherited from the symmetric monoidality of the functor $\D\mapsto \coCart_\D$ from \Cref{defn : cancocart}. 

In particular, \Cref{thm : metacosm} induces an $\Oo$-monoidal equivalence $$\Fun(\C,\Cat)\simeq \coCart_\C$$ between the Day convolution $\Oo$-monoidal structure and this specific $\Oo$-monoidal structure, whose underlying functor is the unstraightening equivalence. 
\end{cor}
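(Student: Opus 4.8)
The plan is to treat this corollary as an assembly of the results already proved in this section: theorem \ref{thm : macroprecise}, lemma \ref{lm A}, theorem \ref{thm : metacosm}, and lemma \ref{lm : abstractisday}. No genuinely new construction or computation is needed; the work is to see that the structures named in the statement are exactly the ones produced by those earlier results.

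First I would dispatch the first sentence. Recall from the proof of theorem \ref{thm : macroprecise} that there is a pullback square identifying $\coCart_\C^\otimes$ with $\Oo^\otimes \times_{\Cat^\times} \coCart^\times$. Since $\coCart^\times \to \Cat^\times$ is a symmetric monoidal coCartesian fibration (by the observation following proposition \ref{prop : cocartcart}) and the structure map $\Oo^\otimes \to \Cat^\times$ exhibits $\C$ as an $\Oo$-monoidal category, proposition \ref{prop : pullback} applies and shows that this pullback is an $\Oo$-monoidal category, with the projection to $\Oo^\otimes$ strong $\Oo$-monoidal. Combined with the sub-$\Oo$-operad identification of theorem \ref{thm : macroprecise}, this gives that $\coCart_\C^\otimes \subset (\Cat_{/\C})^\otimes$ is an $\Oo$-monoidal category. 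To identify this structure with the one from definition \ref{defn : cancocart}, I would invoke lemma \ref{lm A} with $\Oo = \mathrm{Comm}$, $\E^\otimes = \coCart^\times$, $\F = \Cat^\times$, $\Oo' = \Oo$, and $f = \C$: construction (2) of that lemma is precisely the pullback appearing above, while construction (1) is by definition the $\Oo$-monoidal structure obtained by straightening $\coCart \to \Cat$ and evaluating the resulting lax symmetric monoidal functor on the $\Oo$-algebra $\C$, i.e.\ the structure of definition \ref{defn : cancocart}. Lemma \ref{lm A} says these agree.

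It then remains to produce the $\Oo$-monoidal equivalence. Theorem \ref{thm : metacosm} provides an equivalence of lax symmetric monoidal functors $\Cat \to \Mod_{\Cat}(\PrL)$ between $\C \mapsto \Fun(\C,\Cat)$ and $\C \mapsto \coCart_\C$. Evaluating a lax symmetric monoidal functor on the $\Oo$-algebra $\C$ in $\Cat$ yields an $\Oo$-algebra in the target, and an equivalence of lax symmetric monoidal functors induces an equivalence of the resulting $\Oo$-algebras; this is exactly the functoriality already invoked in the remark following theorem \ref{thm : metacosm}. Hence the unstraightening equivalence upgrades to an $\Oo$-monoidal equivalence $\Fun(\C,\Cat) \simeq \coCart_\C$ of the induced $\Oo$-monoidal structures. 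Finally, lemma \ref{lm : abstractisday} identifies the induced structure on $\Fun(\C,\Cat)$ as Day convolution, while the first part of the proof identifies the structure on $\coCart_\C$ with the explicit sub-$\Oo$-operad of theorem \ref{thm : macroprecise}.

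The only point requiring care — and thus what I regard as the main (though modest) obstacle — is the bookkeeping that the $\Oo$-monoidal structures obtained by evaluating each lax symmetric monoidal functor on the algebra $\C$ really are the two structures named in the statement, rather than some a priori different incarnations. This coherence is precisely what lemma \ref{lm A} secures on the $\coCart$ side and lemma \ref{lm : abstractisday} on the $\Fun$ side; once both identifications are in place, the desired $\Oo$-monoidal equivalence follows formally from the metacosm theorem by passing to $\Oo$-algebras, with no further calculation.
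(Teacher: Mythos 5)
Your proposal is correct and follows essentially the same route as the paper: the corollary is stated there as a summary of the section's work, with the structure on $\coCart_\C$ identified via lemma \ref{lm A} and the pullback square inside the proof of theorem \ref{thm : macroprecise}, the Day convolution side handled by lemma \ref{lm : abstractisday}, and the $\Oo$-monoidal equivalence obtained by evaluating the equivalence of lax symmetric monoidal functors from theorem \ref{thm : metacosm} on the $\Oo$-algebra $\C$. Your assembly of these ingredients, including the bookkeeping that the two induced structures are the ones named in the statement, is exactly the intended argument.
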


We also deduce the following description of algebras in $\coCart_\C$ :
\begin{prop}\label{prop : algincocart}
let $\Oo$ be an operad and $\C$ an $\Oo$-monoidal category. The inclusion $\coCart_\C\to\Cat_{/\C}$ induces an equivalence of categories between $\Oo$-algebras in $\coCart_\C$ and $\Oo$-monoidally coCartesian fibrations over $\C$ (see \Cref{defn : monfib}): $$\Alg_\Oo(\coCart_\C)\simeq \coCart_\C^\Oo$$
\end{prop}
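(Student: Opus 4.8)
The plan is to deduce this directly from lemma \ref{lm A}. I would apply that lemma to the symmetric monoidal coCartesian fibration $\coCart^\times \to \Cat^\times$ --- whose hypotheses hold by proposition \ref{prop : cocartcart} and definition \ref{defn : cancocart} --- taking the ambient operad there to be $\mathrm{Comm}$, the operad $\Oo'$ over it to be our $\Oo$, and the algebra $f$ to be $\C$ regarded as an $\Oo$-algebra in $\Cat$. The fiber is then $\coCart_\C$ with its canonical $\Oo$-monoidal structure from definition \ref{defn : cancocart}, and the final clause of lemma \ref{lm A} identifies $\Alg_\Oo(\coCart_\C)$ with the fiber over $\C$ of the map $\Alg_\Oo(\coCart^\times) \to \Alg_\Oo(\Cat^\times)$ induced by the symmetric monoidal functor $ev_1$.

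The heart of the argument is then to identify this total algebra category. Since $\coCart^\times$ carries the \emph{cartesian} symmetric monoidal structure, an $\Oo$-algebra in it is an $\Oo$-monoid object of $\coCart$: an assignment sending each color $T$ to an object $(\D_T \to \C_T)$ of $\coCart$ and each active operation $T_1\oplus\dots\oplus T_n \to T_\infty$ to a morphism $\prod_i \D_{T_i} \to \D_{T_\infty}$ of $\coCart$ covering the corresponding map of bases. Because $\coCart \subset \Fun(\Delta^1,\Cat)$ is the non-full subcategory of coCartesian fibrations and coCartesian-edge-preserving functors, I claim this is exactly the data of an $\Oo$-monoidal coCartesian fibration in the sense of lemma \ref{lm : glob}: the Segal/monoid condition supplies the fibrewise decomposition $\D^\otimes_T \simeq \prod_i \D^\otimes_{T_i}$, while the requirement that each operation be a \emph{morphism of $\coCart$} is precisely the condition that the $\Oo$-operations preserve coCartesian edges. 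Under $ev_1$ such a monoid maps to its underlying $\Oo$-monoidal base, so passing to the fiber over $\C$ yields exactly the $\Oo$-monoidal coCartesian fibrations over $\C$, together with the correct morphisms (functors over $\C^\otimes$ preserving coCartesian edges), i.e. $\coCart_\C^\Oo$ of definition \ref{defn : monfib}.

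To see that the resulting equivalence is the one induced by the inclusion $\coCart_\C \subset \Cat_{/\C}$, I would run the identical analysis for the cartesian symmetric monoidal coCartesian fibration $\Fun(\Delta^1,\Cat)^\times \to \Cat^\times$, exactly as in the proof of theorem \ref{thm : macroprecise}, and use that the non-full inclusion of operads $\coCart^\times \subset \Fun(\Delta^1,\Cat)^\times$ over $\Cat^\times$ is compatible with the pullbacks of lemma \ref{lm A}. This exhibits $\Alg_\Oo(\coCart_\C) \to \Alg_\Oo(\Cat_{/\C}) \simeq \Alg_\Oo(\Cat)_{/\C}$ as the functor sending an $\Oo$-monoidal coCartesian fibration to its underlying $\Oo$-monoidal functor $\D \to \C$, with essential image exactly $\coCart_\C^\Oo$. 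I expect the one delicate point to be the second step --- promoting the object-level matching ``$\Oo$-monoid in $\coCart$ $=$ $\Oo$-monoidal coCartesian fibration'' to an equivalence of $\infty$-categories with the correct mapping spaces; this is a matter of unwinding definition \ref{defn : monfib} and lemma \ref{lm : glob} against the definition of monoid objects in a cartesian monoidal category, but it is where care is needed. As a sanity check one can also obtain the abstract equivalence from the chain $\Alg_\Oo(\coCart_\C) \simeq \Alg_\Oo(\Fun(\C,\Cat))$ (corollary \ref{cor : macromongro}), $\simeq \Fun^{lax-\Oo}(\C,\Cat)$ (the universal property of Day convolution), $\simeq \coCart_\C^\Oo$ (theorem \ref{thm : microcosm}), though this route does not by itself display the inclusion into $\Cat_{/\C}$.
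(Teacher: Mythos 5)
Your proposal is correct, but it takes a genuinely different route from the paper's. The paper never leaves the base $\C$: having identified $\coCart_\C^\otimes \subset (\Cat_{/\C})^\otimes$ as a non-full sub-$\Oo$-operad (theorem \ref{thm : macroprecise}), it notes that $\Alg_\Oo(\coCart_\C)\to \Alg_\Oo(\Cat_{/\C})$ is a non-full subcategory, invokes the slice-algebra equivalence $\Alg_\Oo(\Cat_{/\C})\simeq \Alg_\Oo(\Cat)_{/\C}$ from \cite{thom} together with the identification of $\Oo$-algebras in $\Cat$ with $\Oo$-monoidal categories to land in $\Cat_{/\C^\otimes}$, and concludes by inspecting essential images along this chain. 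You instead work upstairs with the global fibration $\coCart\to \Cat$: the fiber clause of lemma \ref{lm A} replaces the appeal to \cite{thom}, and the remaining content is the identification of $\Oo$-monoids in the cartesian category $\coCart$ with $\Oo$-monoidal coCartesian fibrations over varying bases. What each buys: your route is more self-contained (lemma \ref{lm A} is already proved in the paper, and your parallel analysis of $\Fun(\Delta^1,\Cat)^\times \to \Cat^\times$ even recovers the slice-algebra equivalence, since $\Alg_\Oo(\Fun(\Delta^1,\Cat)^\times)\simeq \Fun(\Delta^1,\Alg_\Oo(\Cat^\times))$ and the fiber of $ev_1$ over $\C$ is a model for $\Alg_\Oo(\Cat)_{/\C}$), while the paper's route has a shorter endgame because the explicit multi-mapping spaces of theorem \ref{thm : macroprecise} make the essential-image check immediate. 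The delicate point you flag is real but no deeper than the paper's own terse ``examining at each stage the essential image'': to match morphisms, observe that by lemma \ref{lm : cartfiber} (via lemma \ref{lm : glob}) every $\C^\otimes$-coCartesian edge of $\D^\otimes$ is a composite of an $\Oo^\otimes$-coCartesian edge and a fiberwise coCartesian one, so a functor over $\C^\otimes$ preserves $\C^\otimes$-coCartesian edges precisely when it preserves both kinds, which is exactly what a natural transformation of monoids with components in $\coCart$ provides. One small addition: pass to a larger universe before applying lemma \ref{lm A} to $\coCart\to\Cat$, as the paper does in the proof of theorem \ref{thm : macroprecise}.
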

\begin{proof}
$\coCart_\C \subset \Cat_{/\C}$ is a non-full sub-$\Oo$-operad, so in particular $\Alg_\Oo(\coCart_\C)\to~\Alg_\Oo(\Cat_{/\C})$ is a non-full subcategory. 

By \cite[Lemma 2.12]{thom}, $\Alg_\Oo(\Cat_{/\C}) \simeq \Alg_\Oo(\Cat)_{/\C}$ and by the equivalence between $\Alg_\Oo(\Cat)$ and $\Oo$-monoidal categories, the latter is a non-full subcategory of $\Cat_{/\C^\otimes}$. 

This chain of functors witnesses $\Alg_\Oo(\coCart_\C)$ as a non-full subcategory of $\Cat_{/\C^\otimes}$. Examining at each stage the essential image yields the desired claim.
\end{proof}
\begin{cor}
Let $\Oo$ be an operad and $\C$ an $\Oo$-monoidal category. Taking $\Oo$-algebras on both sides of the equivalence from \Cref{cor : macromongro} yields an equivalence of categories: $$\Fun^{lax-\Oo}(\C,\Cat)\simeq \coCart^\Oo_\C$$ 
\end{cor}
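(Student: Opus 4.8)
The plan is to apply the functor $\Alg_\Oo(-)$ to the $\Oo$-monoidal equivalence of corollary \ref{cor : macromongro} and then separately recognize what each of the two resulting categories computes. Corollary \ref{cor : macromongro} provides an equivalence $\Fun(\C,\Cat)\simeq \coCart_\C$ of $\Oo$-monoidal categories, i.e. an equivalence in the category of $\Oo$-operads over $\Oo^\otimes$. Since $\Alg_\Oo(-)$ is functorial in morphisms of $\Oo$-operads and carries equivalences to equivalences, it follows formally that $$\Alg_\Oo(\Fun(\C,\Cat))\simeq \Alg_\Oo(\coCart_\C),$$ where the left hand side is formed for the Day convolution structure (lemma \ref{lm : abstractisday}) and the right hand side for the $\Oo$-monoidal structure described in theorem \ref{thm : macroprecise}.

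It then remains to identify each side. For the target, this is exactly proposition \ref{prop : algincocart}, which identifies $\Alg_\Oo(\coCart_\C)$ with $\coCart_\C^\Oo$, the category of $\Oo$-monoidal coCartesian fibrations over $\C$. For the source, I would invoke the universal property of Day convolution: $\Oo$-algebras in the Day convolution $\Oo$-monoidal structure on $\Fun(\C,\Cat)$ are precisely lax $\Oo$-monoidal functors $\C\to \Cat$, so that $\Alg_\Oo(\Fun(\C,\Cat))\simeq \Fun^{lax-\Oo}(\C,\Cat)$. Combining these two identifications with the displayed equivalence yields the claim.

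I do not expect a genuine obstacle here: the substantive work has already been carried out in establishing the $\Oo$-monoidal equivalence of corollary \ref{cor : macromongro} and the algebra identification of proposition \ref{prop : algincocart}, so this statement is a formal assembly of cited inputs. The only external ingredient is the standard fact that algebras in Day convolution compute lax monoidal functors, which is the defining feature of the Day convolution structure and requires no new argument in our setting; if anything, the one point worth stating carefully is that the equivalence of \emph{underlying} functors is the unstraightening equivalence, which is already recorded in corollary \ref{cor : macromongro}.
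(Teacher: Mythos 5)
Your proposal is correct and is essentially the paper's own argument spelled out in more detail: the paper's proof likewise applies $\Alg_\Oo(-)$ to the equivalence of corollary \ref{cor : macromongro}, identifies the coCartesian side via proposition \ref{prop : algincocart}, and identifies the functor side via the standard description of $\Oo$-algebras in Day convolution as lax $\Oo$-monoidal functors. No gap; the two proofs agree step for step.
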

\begin{proof}
This follows from \Cref{prop : algincocart} and the description of $\Oo$-algebras in the Day convolution monoidal structure. 
\end{proof}
\begin{rmk}
Of course, this last corollary does not require our main result, and can simply be seen as instance of un/straightening over $\C^\otimes$, once one knows that lax $\Oo$-monoidal functors $\C\to\Cat$ can be described as certain (ordinary) functors $\C^\otimes\to\Cat$ (and not $\Cat^\times$!).
\pend\end{rmk}

Passing to anima-valued (or $\infty$-groupoid-valued) functors instead of category valued functors corresponds to passing to left fibrations instead of coCartesian fibrations under the unstraightening equivalence. Letting $\Ss\subset \Cat$ denote the full subcategory spanned by anima, we automatically get \Cref{cor : corlfib}: 
\begin{cor}
The anima-valued un/straightening equivalence can be made into a symmetric monoidal equivalence of functors $\Cat\to \PrL$ : $$\Fun(\C,\Ss)\simeq \LFib_\C$$
If $\Oo$ is an operad and $\C$ an $\Oo$-monoidal category, this specializes to an $\Oo$-monoidal equivalence between the Day-convolution and some $\Oo$-monoidal structure on $\LFib_\C$ analogous to the one described in \Cref{thm : macroprecise}. Taking $\Oo$-algebras therein specializes to an equivalence $$\Fun^{lax-\Oo}(\C,\Ss)\simeq \LFib^\Oo_\C$$
where the latter is defined analogously to \Cref{defn : monfib}. 
\end{cor}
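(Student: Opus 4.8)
The plan is to reproduce the development of Sections~\ref{section : meta} and~\ref{section : macro} verbatim, everywhere replacing $\Cat$ by the full subcategory $\Ss\subset\Cat$ of spaces and $\coCart$ by the full subcategory $\LFib\subset\coCart$ (both inside $\Fun(\Delta^1,\Cat)$) spanned by left fibrations. The only genuinely new input needed is that $\Ss\subset\Cat$ is closed under finite products (hence is a cartesian symmetric monoidal subcategory) and, crucially, is also closed under all small colimits, the inclusion being left adjoint to the core functor $(-)^\simeq$.

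First I would check that $ev_1:\LFib\to\Cat$ is a symmetric monoidal coCartesian fibration whose coCartesian edges are closed under products, realizing it as the appropriate restriction of $\coCart\to\Cat$. Fullness of $\LFib\subset\coCart$ is automatic, since every edge of a left fibration is coCartesian, so any functor over the base between left fibrations preserves coCartesian edges. The fibration property is where colimit-closure enters: the $\coCart$-pushforward along $\alpha:\C_0\to\C_1$ is the left Kan extension $\alpha_!$, computed pointwise as a colimit over a comma category; if the source is $\Ss$-valued this is a colimit of spaces, hence a space, so $\alpha_!$ preserves left fibrations and $\LFib\to\Cat$ is a coCartesian subfibration of $\coCart\to\Cat$. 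Closure under products is immediate, and closure of coCartesian edges under products is inherited from Proposition~\ref{prop : cocartcart}. By Theorem~\ref{thm : microcosm} this equips $\C\mapsto\LFib_\C$ with a canonical lax symmetric monoidal structure, and the inclusion $\LFib\hookrightarrow\coCart$ --- which preserves products and $ev_1$-coCartesian edges --- induces a symmetric monoidal natural transformation $\LFib_\C\to\coCart_\C$ that, under straightening, is the inclusion $\Fun(\C,\Ss)\hookrightarrow\Fun(\C,\Cat)$.

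Next I would run the metacosm argument of Section~\ref{section : meta}. The representable transformation $\C\op\to\LFib_\C$ of Corollary~\ref{cor : yonmon} (whose image is the Yoneda embedding $\C\op\to\Fun(\C,\Ss)$) is symmetric monoidal and lifts to $\PrL$ exactly as in Proposition~\ref{prop : liftcocart}: each $\LFib_\C\simeq\Fun(\C,\Ss)$ is presentable, the structure maps preserve colimits in each variable, and $\LFib_*\simeq\Ss$ is cartesian by Lemma~\ref{lm A}, matching the unit of $\PrL$. Since $\Fun(\C,\Ss)$ is the free cocompletion of $\C\op$, the universal property of presheaf categories extends $\C\op\to\LFib_\C$ uniquely to a colimit-preserving symmetric monoidal functor $\Fun(\C,\Ss)\to\LFib_\C$ of functors $\Cat\to\PrL$; as in the $\Cat$-valued case, its underlying functor agrees with space-valued unstraightening on representables and both are colimit-preserving, so they coincide and the functor is an equivalence. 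Note that here, unlike in Definition~\ref{defn : canday}, no $-\otimes\Cat$ is needed, as $\Fun(\C,\Ss)$ already lands in $\PrL$.

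Finally, the $\Oo$-monoidal refinement and the statement on algebras follow Section~\ref{section : macro} word for word: applying Lemma~\ref{lm A} to the symmetric monoidal coCartesian fibration $\LFib\to\Cat$ and to $\Fun(\Delta^1,\Cat)$ exhibits $(\LFib_\C)^\otimes$ as a non-full sub-$\Oo$-operad of $(\Cat_{/\C})^\otimes$ with the multi-mapping-space description of Theorem~\ref{thm : macroprecise}, and taking $\Oo$-algebras yields $\Fun^{lax-\Oo}(\C,\Ss)\simeq\LFib^\Oo_\C$, with $\LFib^\Oo_\C$ defined as in Definition~\ref{defn : monfib}. The one point requiring care throughout is that each construction for $\coCart$ restricts compatibly to $\LFib$; this is the main (and rather mild) obstacle, and in every instance it reduces to the colimit- and product-closure of $\Ss$ inside $\Cat$, together with the fact that $\LFib_\C\subset\coCart_\C$ corresponds naturally to $\Fun(\C,\Ss)\subset\Fun(\C,\Cat)$ under straightening.
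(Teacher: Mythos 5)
Your proposal is correct, but it takes a genuinely different route from the paper. The paper does not rerun any machinery: it observes that under the unstraightening equivalence the full subcategory $\Fun(\C,\Ss)\subset\Fun(\C,\Cat)$ corresponds to $\LFib_\C\subset\coCart_\C$, and deduces the corollary \emph{automatically} by restricting the already-established symmetric monoidal equivalence of theorem \ref{thm : metacosm} (and its macrocosmic consequences, theorem \ref{thm : macroprecise} and proposition \ref{prop : algincocart}) to these full subfunctors --- the entire proof is essentially one sentence. You instead reconstruct the whole of sections \ref{section : meta} and \ref{section : macro} in parallel, with $\LFib$ replacing $\coCart$ and $\Ss$ replacing $\Cat$. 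Both arguments ultimately rest on the same fact --- closure of $\Ss\subset\Cat$ under colimits and finite products --- but they deploy it differently: the paper uses it (implicitly) to see that the structure maps and the equivalence restrict; you use it to show $\LFib\to\Cat$ is a symmetric monoidal coCartesian subfibration of $\coCart\to\Cat$ and then rerun the construction. What your route buys is a self-contained treatment and a genuine observation of simplification: since $\Fun(\C,\Ss)$ is itself the free cocompletion of $\C\op$, no $\Cat$-linearity (no $-\otimes\Cat$) is needed to pin down the extension, so the target can be $\PrL$ rather than $\Mod_\Cat(\PrL)$; the cost is repeating a substantial amount of work the paper gets for free. One small slip: you describe $(\LFib_\C)^\otimes$ as a \emph{non-full} sub-$\Oo$-operad of $(\Cat_{/\C})^\otimes$, but it is in fact full --- as you yourself note when arguing that $\LFib\subset\coCart$ is full, every edge of a left fibration is coCartesian, so the condition of preserving coCartesian morphisms in each variable is vacuous on multimorphisms between left fibrations; this is harmless for the statement but worth correcting.
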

Specializing a bit more, and letting $\C$ be a anima $X$, in this case the inclusion $\LFib_X\subset \Ss_{/X}$ is an equivalence, we obtain the following folklore result (see e.g. \cite[Notation 3.1.2]{HLBrauer} in the case where $\Oo$ is the commutative operad) : 
\begin{cor}
Let $\Oo$ be an operad and $X$ an $\Oo$-algebra in anima. There is an equivalence of $\Oo$-monoidal categories $$\Fun(X,\Ss)\simeq \Ss_{/X}$$
where the left hand side has the Day convolution structure, while the right hand side has the comma category $\Oo$-monoidal structure. 
\end{cor}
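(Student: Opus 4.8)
The plan is to obtain this statement by specializing the preceding corollary (the left-fibration version of theorem \ref{thm : macroprecise}, i.e.\ corollary \ref{cor : corlfib}) to the case $\C = X$, where the space $X$ is regarded as an $\infty$-category, namely an $\infty$-groupoid. This immediately yields an $\Oo$-monoidal equivalence $\Fun(X,\Ss)\simeq \LFib_X$, with $\LFib_X$ carrying the $\Oo$-monoidal structure described (for left fibrations) in the analogue of theorem \ref{thm : macroprecise}. It then remains only to identify $\LFib_X$, together with this structure, with $\Ss_{/X}$ equipped with its comma-category $\Oo$-monoidal structure; the underlying functor will then automatically be the space-valued unstraightening.

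First I would record the underlying equivalence $\LFib_X\simeq \Ss_{/X}$. Since $X$ is an $\infty$-groupoid, the total category of any left fibration $E\to X$ is again an $\infty$-groupoid (a left fibration over a Kan complex is a Kan fibration), so the fully faithful inclusion $\LFib_X\hookrightarrow \Cat_{/X}$ factors through the full subcategory $\Ss_{/X}$; conversely every map of spaces is, up to equivalence, a left fibration, so the resulting functor $\LFib_X\to\Ss_{/X}$ is essentially surjective and hence an equivalence. Here I use that $\LFib_X$ is a \emph{full} subcategory of $\Cat_{/X}$: in a left fibration every edge is coCartesian, so any map over the base automatically preserves coCartesian edges.

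The heart of the argument, and the main obstacle, is checking that this equivalence is $\Oo$-monoidal for the comma structure. By (the left-fibration analogue of) theorem \ref{thm : macroprecise}, the $\Oo$-monoidal structure on $\LFib_X$ is the non-full sub-$\Oo$-operad of the comma-category $\Oo$-monoidal structure $(\Cat_{/X})^\otimes$ whose multimorphisms are the functors over the relevant $\Oo$-operation preserving coCartesian edges in each variable. Because $X$ is a space, every object here is a left fibration, in which all edges are coCartesian, so the coCartesian-preservation condition is vacuous; thus $\LFib_X^\otimes$ is precisely the \emph{full} sub-$\Oo$-operad of $(\Cat_{/X})^\otimes$ spanned by the spaces over $X$. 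Finally, the comma $\Oo$-operations, namely products of total categories followed by the structure map $e_!$ of the $\Oo$-algebra $X$, preserve the property of having a space as total object, so this full sub-$\Oo$-operad is exactly the comma-category $\Oo$-monoidal structure on $\Ss_{/X}$. The only thing to verify beyond theorem \ref{thm : macroprecise} is therefore these two elementary closure and triviality statements, after which the desired equivalence of $\Oo$-monoidal categories follows.
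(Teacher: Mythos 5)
Your proposal is correct and follows essentially the same route as the paper: the paper obtains this corollary by specializing corollary \ref{cor : corlfib} to the case where $\C$ is a space $X$ and invoking the equivalence $\LFib_X\simeq \Ss_{/X}$, which is precisely your strategy. You simply spell out the details the paper leaves implicit -- that left fibrations over a space have spaces as total objects, that all edges in a left fibration are coCartesian so the coCartesian-preservation conditions on morphisms and multimorphisms are vacuous, and that the comma operations preserve spaces -- and these verifications are all accurate.
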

\begin{rmk}
An independent proof of this result is much simpler than what we developped in this paper. Indeed, $\Ss$ has a universal property as a symmetric monoidal category, so to compare the lax symmetric monoidal functors $\Ss\to \PrL$, $X\mapsto \Ss_{/X}$ and $X\mapsto \Fun(X,\Ss)$, it suffices to check that they both preserve colimits and are in fact strict symmetric monoidal, rather than simply lax symmetric monoidal, and both claims are relatively easy. 
\pend\end{rmk}
\section{Comparison between macrocosmic and microcosmic versions}\label{section : comp}
We conclude this paper by outlining how a comparison between the macrocosmic version and the microcosmic version might go. 
\begin{warn}
In this section, some of the statements will not be completely proved, only conditional on some other statements. The conditional statements which we state nonetheless will be appended with a (*) symbol. 
\pend\end{warn}

Let us first explain what we wish to compare:  let $\C$ be an $\Oo$-monoidal category, and  $f: \C\to \Cat$ be a lax $\Oo$-monoidal functor. 

We have produced two ways to unstraighten it to an $\Oo$-monoidal fibration $\D\to \C$ :
\begin{enumerate}
    \item Using \Cref{thm : microcosm} directly; 
    \item We can view $f$ as an $\Oo$-algebra in $\Fun(\C,\Cat)$ (see \cite[Section 2.2.6]{HA}), use \Cref{thm  : macrocosm} to view it as an $\Oo$-algebra in $\coCart_\C$ and observe that these are exactly $\Oo$-monoidal fibrations over $\C$, by \Cref{prop : algincocart}.
\end{enumerate}
It is intuitively clear that these yield the same result : there should not be more than one way to turn monoidal fibrations into lax monoidal functors and conversely. 

In fact, it is not hard to convince oneself that this is true at a homotopically naive level. Namely, it is not hard to check that the informal description from the beginning of \Cref{section : pullback} (which is an accurate description of construction 1. above at the level of homotopy categories) is also a description of construction 2. above, unwinding all the definitions. 

What is more complicated is to do this coherently, and the author currently does not know how to. The goal of this section is to outline possible approaches to this question. For simplicity of exposition, we focus on the case $\Oo = \mathrm{Comm}$. 

The first possible approach, which we have not been able to carry through, is to try and prove a sort of rigidity statement, along the lines of: ``the space of self-equivalences of the functor $\C\mapsto \Fun^{lax-\otimes}(\C,\Cat)$ on $\CAlg(\Cat)$ is contractible'', following similar rigidity results for the category of categories itself, see e.g. \cite{BSP} and \cite[Appendix A]{HHLN2}.  

A second approach, which we could push just a bit further, involves $(\infty,2)$-categorical generalizations of our work, and we describe it now.

\begin{obs}\label{obs : suff}
Let $f:\C\to \Cat$ be as above. The microcosmic construction makes $\D^\otimes$ fit in a pullback $$\xymatrix{\D^\otimes \ar[r] \ar[d] & (\Cat_{*\sslash})^\times \ar[d]\\ \C^\otimes\ar[r] & \Cat^\times}$$ 

In particular, because $\D^\otimes$ and the abstract construction from the macrocosmic version agree at the level of underlying categories, it would suffice to construct a similar \emph{commutative square} with the latter as the top left hand corner, for then we would get a comparison map which would be an equivalence of underlying categories, and hence an equivalence. 

The issue in doing this with our current approach is that we want to use the naturality of our equivalence, but we only get naturality in \emph{strong} symmetric monoidal functors, because we only have control over commutative algebras in $\Cat$ - but $\C\to \Cat$ is only lax symmetric monoidal. 
\pend\end{obs}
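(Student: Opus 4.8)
The plan is to separate the concrete content of the observation---the existence of the microcosmic pullback square and the sufficiency of a corresponding macrocosmic square---from the genuine difficulty, which is a construction the observation deliberately leaves open. First I would record the pullback square, which is immediate from the proof of theorem \ref{thm : microcosm}. There, for a lax symmetric monoidal $f\colon \C \to \Cat$, the underlying unstraightening is $\D = \C \times_{\Cat} \Cat_{*//}$, and its symmetric monoidal refinement $\D^\otimes$ is produced by applying proposition \ref{prop : pullback} to the cospan of operads $\C^\otimes \to \Cat^\times \leftarrow (\Cat_{*//})^\times$, whose right leg is strong symmetric monoidal and satisfies the fibrational hypotheses there, and whose left leg is $f$. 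By remark \ref{rmk : pullbackopd} the pullback is formed in operads, with lax projection to $(\Cat_{*//})^\times$, so $\D^\otimes \simeq \C^\otimes \times_{\Cat^\times} (\Cat_{*//})^\times$ and the displayed diagram is a pullback square of operads essentially by construction.

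Next I would argue sufficiency. Write $(\D')^\otimes$ for the macrocosmic construction, i.e.\ the symmetric monoidal coCartesian fibration over $\C$ obtained by viewing $f$ as a commutative algebra in $\Fun(\C,\Cat) \simeq \coCart_\C$ and transporting it into $\Cat_{/\C}$ via proposition \ref{prop : algincocart}; its underlying category is again the unstraightening of $f$. Suppose one has built a commutative square of operads with $(\D')^\otimes$ in the top-left corner and the same right and bottom edges $(\Cat_{*//})^\times \to \Cat^\times \leftarrow \C^\otimes$. Since $\D^\otimes$ is the pullback, its universal property supplies a comparison map $(\D')^\otimes \to \D^\otimes$ of operads over $\C^\otimes$, which on underlying categories is the canonical equivalence of unstraightenings of $f$. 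Provided this comparison preserves coCartesian edges over $\C^\otimes$---which a well-chosen square should guarantee, the coCartesian edges of $\D^\otimes$ being detected through $(\Cat_{*//})^\times$---it is a morphism in $\coCart^{\mathrm{Comm}}_\C$, and then, being an underlying equivalence, it is an equivalence. Here I use that the forgetful functor $\coCart^{\mathrm{Comm}}_\C \simeq \Fun^{lax-\mathrm{Comm}}(\C,\Cat) \to \Fun(\C,\Cat)$ is conservative: composing it with the restriction $\Fun(\C,\Cat) \to \Fun(\mathrm{ob}\,\C,\Cat)$ recovers evaluation at the objects of $\C$, which detects equivalences of algebras over the operad $\C^\otimes$.

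The hard part, and the reason this stays an observation, is the construction of the square itself, specifically the top arrow $(\D')^\otimes \to (\Cat_{*//})^\times$ lying over $f$. The microcosmic square exists precisely because $(\Cat_{*//})^\times \to \Cat^\times$ is strong symmetric monoidal while $f$ is allowed to be merely lax; but $(\D')^\otimes$ is assembled abstractly, from the symmetric monoidal straightening equivalence of theorem \ref{thm : metacosm} and the algebra structure of $f$, and carries no manifest map to the universal coCartesian fibration. With the tools available one only controls strong symmetric monoidal functors into $\Cat$, equivalently commutative algebras in $\Cat$, whereas producing this map naturally in the merely lax structure of $f$ appears to demand the $(\infty,2)$-categorical naturality of $\Cat_{*//} \to \Cat$ that is not yet accessible. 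I therefore expect this construction, not the formal deduction above, to be the genuine obstacle.
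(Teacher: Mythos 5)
Your proposal is correct and follows essentially the same route as the paper: the pullback square is exactly how the proof of theorem \ref{thm : microcosm} (via proposition \ref{prop : pullback} and remark \ref{rmk : pullbackopd}) produces $\D^\otimes$, the sufficiency argument is the universal property of the pullback, and you correctly isolate the construction of the top arrow $(\D')^\otimes\to(\Cat_{*//})^\times$ over the merely lax $f$ as the genuine obstacle, which is precisely the paper's point.

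One remark: your explicit caveat that the comparison map must preserve coCartesian edges (so as to be a morphism in $\coCart^{\mathrm{Comm}}_\C$, where conservativity of the forgetful functor from algebras applies) is a precision the paper elides --- its phrase ``an equivalence of underlying categories, and hence an equivalence'' is not valid for an arbitrary map of operads, since an operad map that is an underlying equivalence need not be an operad equivalence (e.g.\ the identity of the poset $(\N,\le)$ is lax monoidal from $(\N,+)$ to $(\N,\max)$ but is not an equivalence of operads, as it is not an equivalence on multi-mapping spaces). Your extra hypothesis is exactly what is needed, and it is indeed satisfied in the paper's eventual application (corollary \ref{cor : comp}), where the constructed square consists of strong symmetric monoidal functors, so that coCartesian edges are preserved for the reason you indicate: coCartesian edges of the pullback $\D^\otimes$ are detected by their images in $(\Cat_{*//})^\times$.
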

Conditional on some simpler-looking $(\infty,2)$-categorical statement, we prove: 
\begin{cor}[*]\label{cor : comp}
In case $\C\to \Cat$ is strong symmetric monoidal, the two constructions agree.
\end{cor}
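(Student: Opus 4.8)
The plan is to leverage Observation \ref{obs : suff}, which reduces the corollary to constructing a commutative square whose top-left corner is the macrocosmically-produced $\Oo$-monoidal fibration $\D^\otimes$, and whose shape matches the microcosmic pullback square against $(\Cat_{*//})^\times\to\Cat^\times$. The crucial simplification in the strong symmetric monoidal case is that a \emph{strong} symmetric monoidal functor $\C\to\Cat$ \emph{is} a commutative algebra in $\Cat$, i.e.\ a symmetric monoidal category together with a symmetric monoidal functor into $\Cat^\times$. This is precisely the situation where the difficulty flagged at the end of Observation \ref{obs : suff} evaporates: we no longer need control over lax algebras, only over honest commutative algebras, over which our machinery does have control.

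First I would record that when $f:\C\to\Cat$ is strong symmetric monoidal, the macrocosmic construction sends $f$, viewed as an $\Oo$-algebra in $\Fun(\C,\Cat)$ under the Day convolution structure, to an $\Oo$-algebra in $\coCart_\C$, hence (by Proposition \ref{prop : algincocart}) to an $\Oo$-monoidal coCartesian fibration $\D^\otimes\to\C^\otimes$. By Corollary \ref{cor : macromongro}, this sits as a non-full sub-$\Oo$-operad of $(\Cat_{/\C})^\otimes$, which itself arises (as in the proof of Theorem \ref{thm : macroprecise}) as the pullback of $\Fun(\Delta^1,\Cat)^\times\to\Cat^\times$ along $f$. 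The point is then to produce a map from this $\D^\otimes$ into $(\Cat_{*//})^\times$ over $\Cat^\times$. Since $f$ is strong monoidal, it defines a symmetric monoidal functor $\C^\otimes\to\Cat^\times$, and the universal coCartesian fibration $(\Cat_{*//})^\times\to\Cat^\times$ is itself a symmetric monoidal coCartesian fibration (as discussed before the proof of Theorem \ref{thm : microcosm}). I would then invoke the universal property of straightening/unstraightening applied \emph{to the symmetric monoidal functor} $\C^\otimes\to\Cat^\times$: the $\Oo$-monoidal coCartesian fibration classified by a symmetric monoidal functor is the pullback of the universal one, yielding the desired commutative (in fact pullback) square
\[
\xymatrix{\D^\otimes \ar[r] \ar[d] & (\Cat_{*//})^\times \ar[d]\\ \C^\otimes\ar[r]^-{f} & \Cat^\times}
\]
with the macrocosmic $\D^\otimes$ as the top-left corner.

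Once this square is in place, Observation \ref{obs : suff} does the rest: both the microcosmic $\D^\otimes$ and the macrocosmic one fit as top-left corners of pullback squares along the same functor $f:\C^\otimes\to\Cat^\times$ against the same map $(\Cat_{*//})^\times\to\Cat^\times$, and since pullbacks are unique up to canonical equivalence, the induced comparison map is an equivalence on total $\Oo$-operads. Because it is a map over $\C^\otimes$ that restricts to the identity (i.e.\ to the unstraightening equivalence) on underlying categories, it is an equivalence of $\Oo$-monoidal categories.

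The main obstacle I anticipate is the conditional $(\infty,2)$-categorical input that the paper's warning gestures at: namely, showing that the \emph{macrocosmically} produced $\Oo$-monoidal fibration genuinely receives the symmetric monoidal map to $(\Cat_{*//})^\times$ coherently, rather than merely having the expected underlying square. The subtlety is that Theorem \ref{thm : metacosm} produces the $\coCart_\C$ structure through a chain of universal-property arguments ($\Cat$-linear extension from $\C\op$, presheaf universal property) rather than directly as a pullback against $(\Cat_{*//})^\times$, so identifying the classifying symmetric monoidal functor of the macrocosmic $\D^\otimes$ with $f$ itself requires knowing that the metacosmic equivalence is compatible with the microcosmic unstraightening at the level of the \emph{classifying symmetric monoidal functors}, not just underlying objects. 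In the strong monoidal case this compatibility reduces to a statement about how the two constructions act on commutative algebras, which is where the $(\infty,2)$-categorical naturality of unstraightening---the same circle of ideas as Remark \ref{rmk : naturalyoneda}---would be needed to make the comparison fully coherent rather than merely naive.
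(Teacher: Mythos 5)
Your opening reduction via Observation \ref{obs : suff} coincides with the paper's starting point, but the step in which you actually produce the comparison square is circular. You propose to ``invoke the universal property of straightening/unstraightening applied to the symmetric monoidal functor $\C^\otimes\to\Cat^\times$: the $\Oo$-monoidal coCartesian fibration classified by a symmetric monoidal functor is the pullback of the universal one.'' The fibration that is tautologically such a pullback is the \emph{microcosmic} $\D^\otimes$; the assertion that the \emph{macrocosmic} $\D^\otimes$ is ``classified by $f$'' in this sense --- i.e.\ that it admits a symmetric monoidal map to $(\Cat_{*//})^\times$ making the square commute --- is precisely the content of the corollary, since the macrocosmic object is produced by an entirely different route (the $\Cat$-linear presheaf universal property of theorem \ref{thm : metacosm}), not by pulling back the universal fibration. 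So your middle step assumes the conclusion. You do acknowledge this in your final paragraph, but you leave the gap as a general appeal to $(\infty,2)$-categorical naturality, which restates the problem rather than reducing it. (A smaller slip: $(\Cat_{/\C})^\otimes$ is pulled back from $\Fun(\Delta^1,\Cat)^\times\to\Cat^\times$ along the algebra map $\Oo^\otimes\to\Cat^\times$ classifying the monoidal structure of $\C$, not along $f$; conflating these two classifying maps is symptomatic of the circularity above.)

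What the paper does to bridge the gap is concrete and is absent from your proposal. First, since $f$ is strong symmetric monoidal it is a morphism in $\CAlg(\widehat{\Cat})$, and by \cite[Theorem B.1]{HR} the functor $\CAlg(\widehat{\coCart})\to\CAlg(\widehat{\Cat})$ is a cartesian fibration compatible with $\CAlg(\Fun(\Delta^1,\widehat{\Cat}))\simeq\Fun(\Delta^1,\CAlg(\widehat{\Cat}))$; the cartesian lift of $f$ with target $\Cat_{*//}$ therefore produces, \emph{unconditionally}, a commutative square of strong symmetric monoidal functors with some corner $\D$, which is what Observation \ref{obs : suff} requires. Second, the macrocosmic side is identified differently: theorem \ref{thm : metacosm} gives a square of symmetric monoidal equivalences intertwining $f_!$ with the pushforward $\widehat{\coCart}_\C\to\widehat{\coCart}_\Cat$; passing to the (lax symmetric monoidal) right adjoints vertically and chasing the identity functor of $\Cat$ exhibits the macrocosmic $\D^\otimes$ as the image of the commutative algebra $\Cat_{*//}\in\CAlg(\widehat{\coCart}_\Cat)$ under the lax symmetric monoidal pullback functor. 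The whole comparison then reduces, via lemma \ref{lm A}, to lemma \ref{lm B}: the cartesian lift of an algebra agrees with its image under the lax monoidal right adjoint --- and only this lemma is conditional, on the single $(\infty,2)$-categorical input of lemma \ref{lm : conditional}. Your proposal contains neither the cartesian-lift construction nor the right-adjoint identification of the macrocosmic side, so it never reaches the point where the genuinely conditional input is isolated into a precise statement.
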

The simpler looking statement in question is the following: 
\begin{lm}[*]\label{lm : conditional}
There is a canonical commutative square $$\xymatrix{\CAlg(\Cat_{*\sslash})\ar[r]\ar[d] & \Cat_{*\sslash} \ar[d] \\ 
\CAlg(\Cat)\ar[r]^-{\CAlg(-)} & \Cat}$$
in which the vertical maps are forgetful maps. 
\end{lm}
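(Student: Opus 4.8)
The plan is to deduce the lemma from the stronger assertion that the square is a \emph{pullback}, exhibiting $\CAlg(\Cat_{*//})$ as the unstraightening of the functor $\CAlg(-)\colon\CAlg(\Cat)\to\Cat$. Recall that $\Cat_{*//}\to\Cat$ is the universal coCartesian fibration, so that it classifies $\id_{\Cat}$ and its fibre over $\D$ is $\D$ itself; moreover, as observed in section \ref{section : micro}, it is a (strong) symmetric monoidal coCartesian fibration for the cartesian structures, its coCartesian edges being the maps $(D,d)\to(E,e)$ for which $f(d)\to e$ is an equivalence. Since $\CAlg(\Cat)\simeq\CMon(\Cat)$ is the category of symmetric monoidal categories and strong symmetric monoidal functors, the pullback of $\Cat_{*//}\to\Cat$ along $\CAlg(-)$ has as underlying objects the pairs $(\D,A)$ with $\D$ symmetric monoidal and $A\in\CAlg(\D)$; this is exactly the class of underlying objects of $\CAlg(\Cat_{*//})$, since a commutative monoid in $\Cat_{*//}$ lying over $\D$ is precisely a commutative algebra in $\D$. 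This coincidence is what we wish to promote to an equivalence.

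First I would apply $\CAlg$ to the strong symmetric monoidal functor $\Cat_{*//}\to\Cat$ to obtain the left vertical (forgetful) map $\CAlg(\Cat_{*//})\to\CAlg(\Cat)$, and check that it is a coCartesian fibration; this follows from the standard fact that a coCartesian fibration of $\infty$-operads induces a coCartesian fibration on commutative-algebra categories (cf. \cite{HA}), applied to the symmetric monoidal coCartesian fibration $\Cat_{*//}\to\Cat$. Next, lemma \ref{lm A} applied with $\Oo=\Oo'=\mathrm{Comm}$, $\E=\Cat_{*//}$ and $\F=\Cat$ identifies the fibre over $\D\in\CAlg(\Cat)$ with $\CAlg$ of the fibre $(\Cat_{*//})_\D$; and since $\Cat_{*//}$ classifies $\id$ (which is strong monoidal, $\Cat$ being cartesian), this fibre is $\D$ with its own symmetric monoidal structure, so the fibre of the left vertical map over $\D$ is $\CAlg(\D)$. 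Identifying the associated classifying functor on morphisms --- the coCartesian pushforward along a strong symmetric monoidal $\phi\colon\D\to\D'$ being the pushforward of algebras $\CAlg(\phi)\colon\CAlg(\D)\to\CAlg(\D')$ --- then pins it down, on the homotopy $2$-category, as $\CAlg(-)$. Granting this identification, $\CAlg(\Cat_{*//})$ is the unstraightening of $\CAlg(-)$, the desired square is the resulting pullback square, and its top map $\CAlg(\Cat_{*//})\to\Cat_{*//}$ is the projection, carrying $(\D,A)$ to the pointed category $(\CAlg(\D),A)$; commutativity is then automatic.

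The hard part is precisely the last identification: promoting the pointwise and $1$-morphism-level matching to a coherent equivalence of functors $\CAlg(\Cat)\to\Cat$. Conceptually, $\CAlg(\D)\simeq\Fun^{lax-\mathrm{Comm}}(\pt,\D)$ is a mapping category in the $(\infty,2)$-category of symmetric monoidal categories and \emph{lax} symmetric monoidal functors, and the classifying functor of $\CAlg(\Cat_{*//})\to\CAlg(\Cat)$ is post-composition by strong functors; the statement that these agree coherently is an instance of the $(\infty,2)$-categorical Yoneda lemma, to the effect that $\CAlg(-)$ is corepresented by the unit $\pt\in\CAlg(\Cat)$. Without a developed theory of this $(\infty,2)$-category --- in particular functoriality of its mapping $\infty$-categories together with an enriched Yoneda lemma --- one can only verify the identification on the homotopy $2$-category (objects, $1$-morphisms and $2$-morphisms), which is exactly why the statement is flagged with a $(*)$ and presented conditionally.
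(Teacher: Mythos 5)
The first thing to say is that the paper has no proof of this lemma for your attempt to be compared against: lemma \ref{lm : conditional} is precisely the statement that the paper declines to prove, marked with a $(*)$, on which all of section \ref{section : comp} is conditional. The paper only offers a heuristic reformulation --- viewing $\CAlg(\Cat)$ and $\CAlg(\Cat_{*//})$ as Segal objects in $\Fun(\Fin_*,\Cat)$ and $\Fun(\Fin_*,\Cat_{*//})$, so that $\CAlg(-)$ becomes a ``lax limit'' functor and the lemma becomes ``lax limits are functorial and $\Cat_{*//}\to\Cat$ preserves them'' --- and explicitly says in the proof of lemma \ref{lm B} that this is ``the lemma that we did not prove''. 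So your proposal, which is also conditional and says so, is on the same footing as the paper rather than behind it.

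As for the content: your reduction is sound as far as it goes, and it is essentially the mirror image of the paper's own manipulation inside the proof of lemma \ref{lm B}. There, the paper argues that correct fibers (lemma \ref{lm A}) plus a commutative square would upgrade to the desired pullback square; you run the implication the other way, aiming to produce $\CAlg(\Cat_{*//})$ as the unstraightening of $\CAlg(-)$ and then read off the commutative square as the projection. Your intermediate steps are fine: $(\Cat_{*//})^\times\to\Cat^\times$ is a symmetric monoidal coCartesian fibration by sections \ref{section : pullback} and \ref{section : micro}, lemma \ref{lm A} identifies the fibers of $\CAlg(\Cat_{*//})\to\CAlg(\Cat)$ with $\CAlg(\D)$, and pushforward along a strong symmetric monoidal $\phi$ is $\CAlg(\phi)$ at the level of the homotopy $2$-category. (One citation quibble: the fact that a symmetric monoidal coCartesian fibration induces a coCartesian fibration on $\CAlg$-categories is not stated in \cite{HA} in the form you invoke; the paper's source for statements of this type is \cite[Appendix B]{HR}, and the fact does require the compatibility of coCartesian edges with products, which holds here.) The genuine gap is exactly the one you name yourself: promoting the pointwise and $1$-morphism-level identification of the classifying functor with $\CAlg(-)$ to a coherent equivalence of functors $\CAlg(\Cat)\to\widehat{\Cat}$. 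This is not an oversight on your part but the same missing coherence the paper points to; your formulation (``$(\infty,2)$-categorical Yoneda, with $\CAlg(-)$ corepresented by the unit among lax symmetric monoidal functors'') and the paper's (``functoriality of lax limits, preserved by $\Cat_{*//}\to\Cat$'') are two phrasings of one unavailable statement. In short: your proposal does not prove the lemma, but neither does the paper, and you have correctly isolated the obstruction that makes it conditional.
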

The reason we view this as an $(\infty,2)$-categorical statement is that, because $\Cat$ and $\Cat_{*\sslash}$ are cartesian, $\CAlg(\Cat)$ (resp. $\CAlg(\Cat_{*\sslash})$) can be viewed as a certain full subcategory of $\Fun(\Fin_*,\Cat)$ (resp. $\Fun(\Fin_*,\Cat_{*\sslash}$), namely the one spanned by functors satisfying Segal conditions; and from this perspective, the functor $\CAlg(-): \CAlg(\Cat)\to \Cat$ can be viewed as the functor ``(partially) lax limit'' : $\Fun(\Fin_*,\Cat)\to \Cat$. 

This statement is then ``lax limits are well-defined functorially, and $\Cat_{*\sslash}\to \Cat$ preserves them'', which is intuitively clear. This statement should be simpler than the other $(\infty,2)$-categorical technology that we seem to need for the general statement, which is why we outline the proof of the special case mentioned above conditional to this statement in more detail. 

\begin{proof}[Proof of \Cref{cor : comp}]
Observe that $\CAlg(\widehat{\coCart}) \to \CAlg(\Fun(\Delta^1,\widehat{\Cat}))$ is a morphism of cartesian fibrations over $\CAlg(\widehat{\Cat})$ \cite[Theorem B.1]{HR}, and the former is a non-full subcategory of the latter. 

In particular, taking $f:\C\to \Cat$ as a morphism in the base, and taking a cartesian lift in $\CAlg(\widehat{\coCart})$ with target $\Cat_{*\sslash}$, we might as well take this cartesian lift in $\CAlg(\Fun(\Delta^1,\widehat{\Cat})) \simeq \Fun(\Delta^1,\CAlg(\widehat{\Cat}))$. 

This cartesian lift is then a morphism in this arrow category, hence a commutative square $$\xymatrix{\D\ar[r] \ar[d] & \Cat_{*\sslash} \ar[d] \\ \C\ar[r]_f & \Cat}$$ of symmetric monoidal categories and (strong !) symmetric monoidal functors. 

In particular, by \Cref{obs : suff}, it really suffices to show that this $\D$ that we obtain is the same as the one from using the construction following the macrocosm version, \Cref{thm  : macrocosm}.

For this, we observe that by \Cref{thm : metacosm} we have a commutative square of symmetric monoidal categories of the form $$\xymatrix{\Fun(\C,\widehat{\Cat}) \ar[r]^\simeq \ar[d]_{f_!} & \widehat{\coCart}_\C \ar[d]^{f_!}\\ \Fun(\Cat,\widehat{\Cat}) \ar[r]^\simeq  & \widehat{\coCart}_\Cat }$$ 
where the horizontal arrows are given by the transformation from \Cref{thm : metacosm}. 

Because the horizontal functors are equivalences, we can take right adjoints of the vertical maps and still have a commutative diagram, now of lax symmetric monoidal functors: $$\xymatrix{\Fun(\C,\widehat{\Cat}) \ar[r]^\simeq & \widehat{\coCart}_\C \\ \Fun(\Cat,\widehat{\Cat}) \ar[r]^\simeq \ar[u]^{f^*} & \widehat{\coCart}_\Cat \ar[u]_{f^*}}$$

Following along from $\C\mapsto \C$ in the bottom left hand corner, up-right gives $f$ and then the symmetric monoidal structure on $\D$ from construction 2. above. 

Going right-up gives $\Cat_{*\sslash}\to \Cat$ with its unique cartesian symmetric monoidal structure, and then the pullback of this. We are reduced to proving that this pullback coincides with the pullbacks from cartesian liftings from the beginning of the proof. By \Cref{lm A}, this follows from the next (conditional) lemma. 
\end{proof}
\begin{lm}[*]\label{lm B}
Let $\E\to \F$ be an $\Oo$-monoidally coCartesian fibration.
Assume that the underlying functor $\E\to\F$ is also a cartesian fibration. 

By \cite[Appendix B]{HR}, $\CAlg(\E)\to \CAlg(\F)$ is also a cartesian fibration, in particular, given  a morphism $f_0\to f_1$ in $\CAlg(\F)$ and $e_1\in\CAlg(\E_{f_1})\simeq\CAlg(\E)_{f_1}$, there are two ways to define a commutative algebra $e_0\in\CAlg(\E_{f_0})\simeq \CAlg(\E)_{f_0}$ together with a map $e_0\to e_1$ lying over $f_0\to f_1$, namely :
\begin{enumerate}
    \item Use the fact that $\CAlg(\E)\to\CAlg(\F)$ is a cartesian fibration and apply the pullback functor $\CAlg(\F)_{f_1}\to \CAlg(\F)_{f_0}$ to $e_1$
    \item Use the fact that the pushforward functor $\E_{f_0}\to \E_{f_1}$ is a symmetric monoidal left adjoint, so it has a lax symmetric monoidal right adjoint and this sends $e_1$ to some commutative algebra $e_0$.
\end{enumerate}
These two constructions yield the same $e_0\in\CAlg(\E_{f_0})$. 
\end{lm}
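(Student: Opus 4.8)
The plan is to deduce the comparison from the essential uniqueness of adjoints, after identifying both constructions as right adjoints of one and the same functor. Write $p:\E\to\F$ and let $\bar g: f_0\to f_1$ be the morphism of $\F$ underlying $g$.

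First I would set up the fiberwise adjunction. Since $p$ is simultaneously a cartesian and a coCartesian fibration, the coCartesian pushforward $g_!:\E_{f_0}\to\E_{f_1}$ is left adjoint to the cartesian pullback $g^*:\E_{f_1}\to\E_{f_0}$; this is the standard adjunction attached to a functor that is both a cartesian and a coCartesian fibration. By theorem \ref{thm : microcosm} the functor $\F\to\Cat$ classifying $\E$ is lax symmetric monoidal, so it sends the algebra morphism $g$ to a morphism of commutative algebras in $\Cat$; that is, $g_!$ is \emph{strong} symmetric monoidal. Thus $g_!\dashv g^*$ is a monoidal adjunction, $R:=g^*$ is canonically lax symmetric monoidal, and it induces an adjunction on commutative algebras $\CAlg(g_!)\dashv\CAlg(R)$. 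Construction 2 is, by definition, $e_0=\CAlg(R)(e_1)$.

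Next I would lift the adjunction to the algebra fibration. By \cite[Appendix B]{HR}, $\CAlg(\E)\to\CAlg(\F)$ is a cartesian fibration, and construction 1 is the value at $e_1$ of its cartesian pullback $g^{*}_{\CAlg}$ along $g$. Using the hypothesis that coCartesian edges are closed under the tensor product, the coCartesian analogue of \cite[Appendix B]{HR} shows that $\CAlg(\E)\to\CAlg(\F)$ is moreover a coCartesian fibration whose pushforward along $g$ is $\CAlg(g_!)$ (the strong monoidal functor $g_!$ being precisely what lifts to algebras). As $\CAlg(\E)\to\CAlg(\F)$ is then both cartesian and coCartesian, the same standard adjunction gives $\CAlg(g_!)\dashv g^{*}_{\CAlg}$. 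Now $\CAlg(g_!)$ has two right adjoints, $g^{*}_{\CAlg}$ and $\CAlg(R)$; by uniqueness of adjoints these agree, compatibly with their counits. Since the counits encode precisely the structure morphisms $e_0\to e_1$ over $g$ produced by the two constructions (the cartesian structure map on one side, the monoidal counit composed with the coCartesian edge on the other), this identifies not only the objects $e_0$ but also the maps to $e_1$, which is the assertion.

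The main obstacle I expect is the careful invocation of \cite[Appendix B]{HR} in both variances at once: identifying the cartesian pullback with $g^{*}_{\CAlg}$ and the coCartesian pushforward with $\CAlg(g_!)$, and checking that the induced adjunction $\CAlg(g_!)\dashv\CAlg(R)$ coming from the monoidal adjunction really is compatible, on counits, with the bifibration adjunction $\CAlg(g_!)\dashv g^{*}_{\CAlg}$. An alternative that sidesteps the coCartesian structure on $\CAlg(\E)$ is to observe that construction 2 produces a morphism $e_0\to e_1$ over $g$ whose underlying morphism in $\E$ is the defining cartesian edge $g^*(\bar e_1)\to\bar e_1$ (the cartesian edge factors as a coCartesian edge followed by the fiberwise counit of $g_!\dashv g^*$); by \cite[Appendix B]{HR} a morphism of $\CAlg(\E)$ is cartesian exactly when its underlying morphism is $p$-cartesian, so this morphism is a cartesian lift of $g$ with target $e_1$, and essential uniqueness of cartesian lifts identifies it with construction 1. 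In either route, the genuine content is this bookkeeping relating the monoidally-induced functors to the (co)cartesian push/pullbacks.
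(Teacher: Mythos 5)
Your proposal does not close the lemma: both of your routes silently assume exactly the coherence statement that makes this lemma conditional (the $(*)$ in its statement), and which the paper's own proof isolates and reduces to the unproved lemma \ref{lm : conditional}. The gap sits in the step where you assert that ``the coCartesian analogue of \cite[Appendix B]{HR} shows that $\CAlg(\E)\to\CAlg(\F)$ is moreover a coCartesian fibration \emph{whose pushforward along $g$ is $\CAlg(g_!)$}''. The existence of a coCartesian structure with pointwise-detected coCartesian edges is the less problematic half; the identification of its pushforward functor with $\CAlg(g_!)$ --- the functor induced on algebras by the \emph{straightened} strong monoidal functor $g_!=\Phi(g)$, where $\Phi\colon\F\to\Cat$ is the lax symmetric monoidal straightening from theorem \ref{thm : microcosm} --- is not a formal consequence and is not available as a citation. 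Lemma \ref{lm A} identifies the \emph{fibers} $\CAlg(\E)_{f_i}\simeq\CAlg(\E_{f_i})$; what you need is the morphism-level naturality of that identification, i.e.\ that the straightening of $\CAlg(\E)\to\CAlg(\F)$ is the composite $\CAlg(\F)\xrightarrow{\CAlg(\Phi)}\CAlg(\Cat)\xrightarrow{\CAlg(-)}\widehat{\Cat}$. That is precisely the pullback square the paper's proof constructs, and producing even a suitable commutative square reduces to the universal case, lemma \ref{lm : conditional}, the $(\infty,2)$-categorical statement the paper explicitly does not prove. Without this identification your uniqueness-of-adjoints argument compares the wrong pair: you have $\CAlg(g_!)\dashv\CAlg(g^*)$ (doctrinal adjunction, fiberwise/straightened world) and $P_g\dashv g^{*}_{\CAlg}$ (the bifibration adjunction, $P_g$ the actual pushforward of the algebra fibration), but no equivalence $P_g\simeq\CAlg(g_!)$ from which to conclude $g^{*}_{\CAlg}\simeq\CAlg(g^*)$. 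Agreement of underlying objects and morphisms is not the issue --- as the paper notes, the naive agreement is easy; the content is that the \emph{algebra structures} and the maps to $e_1$ agree coherently.

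Your alternative route has the same gap one step earlier. Construction 2, as defined, produces only an object $\CAlg(g^*)(e_1)\in\CAlg(\E_{f_0})$ together with fiberwise adjunction data; it does not hand you a morphism $e_0\to e_1$ \emph{in $\CAlg(\E)$} lying over $g$. To assemble such a morphism (say, as a coCartesian edge followed by the fiberwise counit) you must already know how the straightened fiberwise picture ($\CAlg(g_!)$, $\CAlg(g^*)$, their unit and counit) glues into the total category $\CAlg(\E)$ over the morphism $g$ --- the same unproved coherence. Equivalently: by the cartesian property, morphisms in $\CAlg(\E)$ over $g$ with target $e_1$ are the same as morphisms in $\CAlg(\E_{f_0})$ into the cartesian pullback $g^{*}_{\CAlg}(e_1)$, so producing your morphism presupposes the very comparison being proved. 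You did sense the danger (``the main obstacle I expect is the careful invocation of \cite[Appendix B]{HR} in both variances at once''), but it is not bookkeeping: it is the actual mathematical content, and it is why the paper, rather than arguing as you do, reformulates construction 2 as a cartesian fibration $\cat P$ via $\mathrm{Adj}^L\simeq(\mathrm{Adj}^R)\op$, reduces the comparison $\cat P\simeq\CAlg(\E)$ to a universal commutative square, and then honestly marks the lemma $(*)$ because that square is exactly lemma \ref{lm : conditional}.
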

\begin{rmk}
If we could prove some analogous lemma for morphisms of ``pseudo-commutative algebras'' in a symmetric monoidal $(\infty,2)$-category (using the terminology of \cite{MV} again), then we could most likely conclude the comparison that we want. 

Indeed, to conclude the above proof of \Cref{cor : comp}, we apply this lemma two universes up to the bicartesian fibration $\widehat{\coCart}\to \widehat{\Cat}$ and the map of algebras $\C\to \Cat$ : this is where we need it to be strong symmetric monoidal : if it is only lax, it is not a morphism of commutative algebras in $\widehat{\Cat}$, but something like a morphism of ``pseudo-commutative algebras''.  
\pend\end{rmk}
\begin{proof}[Proof of \Cref{lm B}]
Here is a reformulation of the statement: $\E\to \F$ induces a lax symmetric monoidal functor $\F\to \Cat$, so we get a functor $\CAlg(\F)\to\CAlg(\Cat)$. We can further postcompose with $\CAlg(\Cat)\overset{\CAlg(-)}\to\Cat$ and observe that our hypotheses actually make this factor through $\mathrm{Adj}^L$, the category of categories and left adjoint functors. 

One then uses the equivalence $\mathrm{Adj}^L\simeq (\mathrm{Adj}^R)\op$ and forgets back down to $\Cat$ to get a functor $\CAlg(\F)\op\to \Cat$, which is classified by a cartesian fibration $\cat P\to\CAlg(\F)$. 

To prove the lemma, it is sufficient to prove that $\cat P\simeq\CAlg(\E)$ as cartesian fibrations over $\CAlg(\F)$. 

Because both are also coCartesian fibrations, it suffices to prove it for the corresponding coCartesian fibration. For $\CAlg(\E)$, we don't have anything to change, but for $\cat P$, we can simplify the above construction and not use the $\mathrm{Adj}^L\simeq (\mathrm{Adj}^R)\op$ equivalence. 

In other words, it suffices to prove that we have a pullback square $$\xymatrix{\CAlg(\E)\ar[d]\ar[rr] & & \Cat_{*\sslash}\ar[d] \\ 
\CAlg(\F) \ar[r] & \CAlg(\Cat) \ar[r]^-{\CAlg(-)} & \Cat}$$ 

because there is one for $\cat P$. 

But by \Cref{lm A}, the fiber is the correct one, so in fact it suffices to find a suitable commutative square. 

Because we have a commutative square of lax symmetric monoidal functors of the form~:  $$\xymatrix{\E \ar[r] \ar[d] & \Cat_{*\sslash}\ar[d]\\ \F\ar[r] & \Cat}$$ by applying $\CAlg(-)$ to it, we see that it suffices to exhbit a suitable commutative square of the form $$\xymatrix{\CAlg(\Cat_{*\sslash})\ar[r]\ar[d]& \Cat_{*\sslash}\ar[d]\\ \CAlg(\Cat)\ar[r]& \Cat}$$
i.e. the previous kind of diagram but in the universal case (note that the bottom map is not the forgetful functor, but the functor $\C\mapsto\CAlg(\C)$).

But this is exactly \Cref{lm : conditional}, i.e. the lemma that we did not prove. This is why this statement is conditional.

\end{proof}

In the general case where $\C\to\Cat$ is only lax symmetric monoidal, a similar approach could work if we managed to make the symmetric monoidal equivalence of lax symmetric monoidal functors $\C\mapsto \Fun(\C,\Cat), \C\mapsto\coCart_\C$ a symmetric monoidal equivalence of lax symmetric monoidal $(\infty,2)$-functors $\Cat^{co}\to \widehat{\Cat}$: indeed both the domain and the target are $(\infty,2)$-categories, and while commutative algebras in the $(\infty,1)$-category $\Cat$ only correspond to symmetric monoidal categories and strong symmetric monoidal functors, a relaxed notion of commutative algebra morphisms in the cartesian monoidal $(\infty,2)$-category $\Cat$ would encompass lax symmetric monoidal functors - in \cite{MV}, this is what is called a morphism of \emph{pseudomonoids} in a symmetric monoidal $2$-category. 
\begin{warn}
The $^{co}$ appearing up there adds an extra layer of difficulty. It stems from the fact that $\Fun(\C,\D)\op\to \Fun(\Fun(\C,\Cat),\Fun(\D,\Cat)), f\mapsto f_!$ is \emph{contravariant} (because $\Fun(\C\op,\D\op)\simeq \Fun(\C,\D)\op$, not $\Fun(\C,\D)$). In particular, a lax symmetric monoidal functor $\C\to \D$ induces an \emph{oplax} symmetric monoidal functor $\Fun(\C,\Cat)\to \Fun(\D,\Cat)$, while restriction is lax symmetric monoidal. 

One might think that this difficulty is artificial, as it goes away when looking at cartesian fibrations and contravariant functor $\C\op\to \Cat$, but the issue with this then becomes that a lax symmetric monoidal functor $\C\to \D$ is oplax symmetric monoidal as a functor $\C\op\to \D\op$, so this difficulty is here regardless. 
\pend\end{warn}

In fact, this kind of $(\infty,2)$-categorical technology and $(\infty,2)$-categorical analogues to the work we did here would be interesting in their own right, but they also seem needed to prove this property of the $(\infty,1)$-categorical version (at least at first sight); and in fact, this seems to be the only obstruction: an elaboration on the strategy outlined above shows that such results would imply the desired comparison. 

Note that \cite{MV} does deal with some $2$-categorical versions of these statements, so one should expect them to hold for $(\infty,2)$-categories as well. 
\appendix

\section{Colimits of coCartesian fibrations}\label{section : straight}
The goal of this appendix is to prove the following statement which featured in earlier approaches to our construction, and which is possibly of independent interest:
\begin{prop}\label{prop : colim}
The (non-full) subcategory inclusion $\coCart_\C\subset \Cat_{/\C}$ preserves colimits, and the subcategory is closed under tensoring with any $K\in\Cat$. 
\end{prop}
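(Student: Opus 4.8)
The plan is to treat the two assertions separately, reducing the first to the statement that the unstraightening functor $\mathrm{Un}_\C \colon \Fun(\C,\Cat)\to \Cat_{/\C}$ preserves colimits. First I would record two soft facts about the target: the slice projection $U\colon \Cat_{/\C}\to \Cat$ preserves and creates colimits (the projection from an over-category creates colimits, so in particular $\Cat_{/\C}$ is cocomplete), and it is conservative (a morphism over $\C$ is an equivalence iff its underlying functor is). Since the straightening equivalence $\coCart_\C\simeq \Fun(\C,\Cat)$ identifies the inclusion $\coCart_\C\hookrightarrow \Cat_{/\C}$ with $\mathrm{Un}_\C$, and $\coCart_\C$ is cocomplete with colimits computed pointwise in $\Fun(\C,\Cat)$, it suffices to show that $\mathrm{Un}_\C$ preserves colimits.

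To do this, given a diagram $F_\bullet\colon I\to \Fun(\C,\Cat)$ with colimit $F$, I would consider the canonical comparison map $\colim_I \mathrm{Un}_\C(F_i)\to \mathrm{Un}_\C(F)$ in $\Cat_{/\C}$, the left-hand colimit being formed in $\Cat_{/\C}$. By conservativity of $U$ it is enough to check this becomes an equivalence after applying $U$. The right-hand side becomes the total category $T(F):=U(\mathrm{Un}_\C(F))$, and the left-hand side becomes $\colim_I T(F_i)$ because $U$ preserves colimits; so the map in question is the cocontinuity comparison $\colim_I T(F_i)\to T(\colim_I F_i)$. Everything thus reduces to the cocontinuity, in the functor variable, of the total-category functor $T\colon \Fun(\C,\Cat)\to \Cat$. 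This is the real point: $T(F)$ is the (op)lax colimit of $F$, that is, a weighted colimit for a \emph{fixed} weight (a coend assembled from products with the slice categories $\C_{/c}$); since the cartesian product preserves colimits in each variable and colimits commute with colimits, such a coend is cocontinuous in $F$. I would cite the identification of the total space of an unstraightening with the oplax colimit, e.g.\ \cite{gepnerhaugsengnik}, rather than reprove it.

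For the second assertion, I would first note that the tensoring of $\Cat_{/\C}$ over $\Cat$ is given on $(\D\xrightarrow{p}\C)$ and $K\in\Cat$ by $\bigl(\D\times K\xrightarrow{\mathrm{pr}}\D\xrightarrow{p}\C\bigr)$, since the projection $U$ creates tensors (which are particular colimits). It then remains to check that $\D\times K\to \C$ is again a coCartesian fibration when $p$ is. But the projection $\mathrm{pr}\colon\D\times K\to \D$ is the base change of $K\to \pt$ along $\D\to \pt$, hence a coCartesian fibration (any functor to the point is one, and coCartesian fibrations are stable under base change), and coCartesian fibrations are closed under composition; so $p\circ\mathrm{pr}\colon\D\times K\to \C$ is coCartesian and lies in $\coCart_\C$. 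A short verification that $-\times K$ also carries coCartesian-edge-preserving functors to coCartesian-edge-preserving functors shows it restricts to an endofunctor of $\coCart_\C$, which is all that is needed.

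The genuinely easy part is the second assertion, which rests only on the composition- and base-change-stability of coCartesian fibrations. The substantive step, and the one I expect to be the main obstacle should one insist on a self-contained argument, is the cocontinuity of the total-category functor $T$ — equivalently, that unstraightening preserves colimits; everything else is formal manipulation with the conservative, colimit-creating slice projection $U$.
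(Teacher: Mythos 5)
Your proposal is correct, but it takes a genuinely different route from the paper's proof. The paper works directly at the level of fibrations: it computes the colimit in $\Cat_{/\C}$ as a localization $\int f[W^{-1}]$ of the Grothendieck construction of the diagram over the index category $I$, invokes Hinich's criterion for localizations of marked coCartesian fibrations (\cite[Proposition 2.1.4.]{hinich}) to see that this localization is still a coCartesian fibration over $\C$, and then checks by hand the extra conditions forced by the non-fullness of $\coCart_\C\subset \Cat_{/\C}$: that the cocone maps $f(i)\to \mathbf{P}$, and the map $\mathbf{P}\to \D$ induced by any cocone in $\coCart_\C$, preserve coCartesian edges. You instead push everything through the straightening equivalence: the inclusion becomes $\mathrm{Un}_\C$, the slice projection $U\colon \Cat_{/\C}\to \Cat$ is conservative and creates colimits, and so the whole first assertion reduces to cocontinuity of the total-category functor $T=U\circ\mathrm{Un}_\C$, which you obtain from the Gepner--Haugseng--Nikolaus identification of $T(F)$ with an oplax colimit given by an explicit coend \cite{gepnerhaugsengnik}. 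Your route is shorter and neatly absorbs the non-fullness bookkeeping (the fact that the colimit cocone consists of coCartesian-edge-preserving maps is hidden in the statement that straightening is an equivalence onto the non-full subcategory $\coCart_\C$); the cost is that it uses the GHN coend formula as a black box, whereas the paper's argument is self-contained modulo Hinich's criterion and its own gluing lemma for coCartesian fibrations, and the paper additionally records yet another proof via categorical patterns, \cite[Example B.2.10.]{HA}. For the tensoring assertion the two arguments are essentially identical: projections are coCartesian fibrations, and coCartesian fibrations are stable under base change and composition.

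Two minor blemishes, neither of which breaks the argument. First, for the covariant (coCartesian) unstraightening the weight is the coslice, $T(F)\simeq \int^{c\in\C}F(c)\times \C_{c/}$, not the slice $\C_{/c}$ (the slice version does not even typecheck as a coend, since both variables would then be covariant); the cocontinuity argument is unaffected. Second, tensors over $\Cat$ are weighted (lax) colimits rather than conical ones, so the parenthetical claim that $U$ creates them ``because tensors are particular colimits'' is imprecise as stated; but the formula $K\otimes(\D\xrightarrow{p}\C)=(\D\times K\to \C)$ is the standard $\Cat$-module structure on the slice, which is exactly what the paper takes as the definition, so nothing is lost.
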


The key result we will use in the proof is \cite[Proposition 2.1.4]{hinich}, so for the convenience of the reader we spell it out:
\begin{prop}{\cite[Definition 2.1.1 and Proposition 2.1.4]{hinich}}
Let $q: \E\to \C$ be a functor, and $W\subset \E, V\subset \C$, classes of weak equivalences such that $q(W)\subset V$. For the induced functor $\E[W^{-1}] \to \C[V^{-1}]$ to be a coCartesian fibration, it suffices: 
\begin{enumerate}
    \item That $q$ be a coCartesian fibration; 
    \item That $q$-coCartesian edges lifing edges in $V$ be in $W$; 
    \item That for any morphism $f: c\to c'$ in $\C$, the induced functor $f_! : \E_c\to \E_{c'}$ send $W\cap \E_c$ to $W\cap \E_{c'}$;
    \item That for any morphism $f: c\to c'$ in $V$, the induced functor $\E_c[(W\cap E_c)^{-1}]\to~\E_{c'}[(W\cap E_{c'})^{-1}]$ be an equivalence
\end{enumerate}
\end{prop}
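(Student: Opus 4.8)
The plan is to reduce the statement to the straightening/unstraightening equivalence, together with two manipulations of localizations of coCartesian fibrations. Write $F\colon \C\to\Cat$ for the straightening of $q$, so that $F(c)\simeq \E_c$ and $F(f)\simeq f_!$ on an edge $f$. Condition 3 equips each fiber $\E_c$ with the class $W_c:=W\cap\E_c$ and says every pushforward $f_!$ preserves these; refining $F$ to a functor $\C\to\Cat^{\mathrm{mark}}$ valued in marked categories $(\E_c,W_c)$ and postcomposing with fiberwise localization $(D,S)\mapsto D[S^{-1}]$ produces $\tilde F\colon\C\to\Cat$, $c\mapsto\E_c[W_c^{-1}]$. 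Condition 4 says $\tilde F$ inverts $V$, so by the universal property of $\C\to\C[V^{-1}]$ it descends uniquely to $\bar F\colon\C[V^{-1}]\to\Cat$. Set $P:=\mathrm{Un}(\bar F)\to\C[V^{-1}]$; this is a coCartesian fibration \emph{by construction}, with fiber $\E_c[W_c^{-1}]$ over $\bar c$. The entire content of the proposition is then the identification $\E[W^{-1}]\simeq P$ over $\C\to\C[V^{-1}]$, which exhibits $\bar q$ as $P\to\C[V^{-1}]$.

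To connect $W$ to this two-stage picture, I would first record a factorization, which is where condition 2 enters. Given $w\in W$ with $v:=q(w)\colon c\to c'$ in $V$, let $\chi\colon x\to v_!(x)$ be the $q$-coCartesian lift of $v$ at the source of $w$; condition 2 gives $\chi\in W$, and coCartesianness of $\chi$ yields a factorization $w\simeq\phi\circ\chi$ with $\phi\colon v_!(x)\to y$ lying in the fiber $\E_{c'}$. Since $W$ is a class of weak equivalences (hence satisfies two-out-of-three) and $w,\chi\in W$, we get $\phi\in W\cap\E_{c'}=W_{c'}$. Thus $W$ is generated, as a class to be inverted, by the fiberwise classes $\bigcup_c W_c$ together with the $q$-coCartesian lifts of edges of $V$.

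It then suffices to localize in two stages and match each stage with an unstraightening. First, inverting $\bigcup_c W_c$ in $\E=\mathrm{Un}(F)$ yields $\mathrm{Un}(\tilde F)$ over $\C$; this is the assertion that fiberwise localization commutes with unstraightening, packaged by the natural transformation $F\Rightarrow\tilde F$ built above (condition 3 is exactly what makes this transformation exist coherently). Second, in $\mathrm{Un}(\tilde F)\to\C$ the pushforwards along $V$ are equivalences by condition 4, so inverting the coCartesian edges lying over $V$ base-changes the fibration along $\C\to\C[V^{-1}]$ and produces precisely $\mathrm{Un}(\bar F)=P$. Combining with the factorization of the previous paragraph gives $\E[W^{-1}]\simeq P$ over $\C[V^{-1}]$, and $\bar q$ is the coCartesian fibration $P\to\C[V^{-1}]$, as desired.

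The honest difficulty lies entirely in the two general localization facts invoked in the last paragraph, not in the bookkeeping of conditions 1--4. Producing the refinement of $F$ to marked-category-valued functors and the transformation $F\Rightarrow\tilde F$ \emph{coherently} — so that fiberwise localization genuinely commutes with unstraightening — is the main obstacle; dually, one must check that inverting the coCartesian edges over $V$ in a fibration whose $V$-pushforwards are equivalences computes the base change along $\C\to\C[V^{-1}]$. Both are model-independent statements about localizations of $\infty$-categories, but establishing them in this generality is the technical heart, which is presumably why the result is drawn from \cite{hinich}; the role of the hypotheses here is only to feed cleanly into those two facts.
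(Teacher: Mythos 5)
First, a point of comparison that frames everything else: the paper does not prove this proposition at all. It is imported verbatim from \cite{hinich} (his Definition 2.1.1 and Proposition 2.1.4) and used as a black box in the proof of proposition \ref{prop : colim}. So there is no in-paper argument to measure you against, and your attempt has to stand on its own. On its own terms, your outline has the right architecture --- straighten $q$, use condition 3 to build the fiberwise localization $\tilde F$, use condition 4 to descend it to $\bar F$ on $\C[V^{-1}]$, and use condition 2 to factor every $w\in W$ as a fiberwise map after a coCartesian lift, so that $\E[W^{-1}]$ can be computed as a two-stage localization --- and you are candid that the weight rests on the two ``general localization facts'' at the end. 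But that candor does not close the gap: those two facts are not background material feeding the hypotheses; they \emph{are} the theorem. Your fact (i), that inverting $\bigcup_c W_c$ in $\E$ yields the unstraightening of $\tilde F$, is exactly the special case of the proposition in which $V$ consists of the equivalences of $\C$ (conditions 2 and 4 are then automatic, and condition 3 is your marked-lift hypothesis) --- and note you need more than the quoted conclusion there: you also need the fiber identification $\E_c[W_c^{-1}]$ and that $\E\to\E[(\bigcup_c W_c)^{-1}]$ preserves coCartesian edges, which belong to Hinich's full statement but not to the excerpt being proved. Deferring fact (i) is therefore essentially circular as a proof strategy for this statement, unless you supply an independent proof of that special case. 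Fact (ii), that inverting the coCartesian edges over $V$ computes the base change along $\C\to\C[V^{-1}]$ when the $V$-pushforwards are equivalences, likewise requires a genuine argument (producing the comparison functor is easy; verifying the universal property of the localization, e.g.\ by a mapping-space computation out of $\mathrm{Un}(\bar F)$, is the work).

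There is also a smaller but genuine soundness issue in the one step you do carry out. Your factorization $w\simeq\phi\circ\chi$ concludes $\phi\in W\cap\E_{c'}$ by two-out-of-three, but ``classes of weak equivalences'' in Hinich's setting (Definition 2.1.1 of \cite{hinich}) are not assumed closed under two-out-of-three; without that closure, $W$ need not be generated by $\bigcup_c W_c\cup\mathrm{coCart}(V)$, and your two-stage localization could invert strictly less than $W$. The standard repair --- replace $W$ by its saturation, which does not change $\E[W^{-1}]$ --- forces you to re-verify conditions 2--4 for the saturated class, where condition 3 in particular is no longer automatic (the saturation of the fiberwise class can be smaller than the fiberwise part of the saturation). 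In summary: the skeleton is the natural one and the bookkeeping of conditions 1--4 is handled correctly, but as submitted this is a reduction of the proposition to two statements of comparable depth, one of which is a special case of the proposition itself, plus an unlicensed two-out-of-three step; it is an outline of where a proof would live, not a proof.
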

\begin{proof}[Proof of \Cref{prop : colim}]
The part about being closed under tensor with $K\in\Cat$ is the observation that the tensor $K\otimes (\D\to \C)$ is given by the composite $K\times\D\to K\times\C\to \C$, both of which are coCartesian fibrations.

For the claim about colimits, because $\coCart_\C \subset \Cat_{/\C}$ is a (non-full) subcategory, it suffices to prove that given a colimit diagram $\overline f : I^\triangleright \to \Cat_{/\C}$ such that its restriction $f$ to $I$ lands in $\coCart_\C$, the following three things are satisfied: the cocone point is also in $\coCart_\C$; the canonical maps to the cocone point are in $\coCart_\C$ as well; and finally given any cocone $f\to \D$ in $\coCart_\C$, that the induced map $\overline f(\infty)\to \D$ also lies in $\coCart_\C$ (where we use $\infty$ to denote the cocone point in $I^\triangleright$).

To adress the first of the three, let $f: I\to \coCart_\C$ be the restriction of our colimit cocone, and $(\mathbf P\to \C) = \overline f(\infty)$ its colimit in $\Cat_{/\C}$. The forgetful functor $\Cat_{/\C}\to\Cat$ preserves all colimits, so $\mathbf P$ is computed as a colimit of categories, namely by taking the cocartesian fibration $p: \int f\to I$ classifying $f$, and inverting the $p$-coCartesian edges. We let $W\subset \int f$ denote the class of $p$-coCartesian edges. 

Then the composite map $\int f \to \mathbf P\to \C$ is given by $\int f\to \C\times I\to \C$, where $\int f\to \C\times I$ is the morphism of coCartesian fibrations over $I$ determined by the natural transformation $f\to \C$ - here we abuse notation, and denote by $f$ the composite functor $I\to \Cat_{/\C}\to\Cat$, and by $\C$ the constant functor on $I$ with value $\C$.

By \Cref{lm : cartfiber}, $\int f\to \C\times I$ is a coCartesian fibration. It follows that $\int f\to \C\times I\to \C$ is one as well : we now want to apply \cite[Proposition 2.1.4]{hinich} to get that $\mathbf P=  \int f[W^{-1}]\to \C$ is also a coCartesian fibration. 

By this result it suffices to prove that $(\int f, W)\to (\C, \mathrm{equivalences})$ is a marked coCartesian fibration, following \cite[Definition 2.1.1]{hinich} - these are the four items we listed before this proof. The first item is that $\int f\to \C$ be a coCartesian fibration, which we just explained. The second item is obvious as the only marked arrows in $\C$ are equivalences; the third item follows from the fact that the diagram $f$ had values in $\coCart_\C$, so the morphisms $f(i)\to f(j)$ are morphisms of coCartesian fibrations over $\C$.
Finally, the fourth item also follows from the fact that the only marked arrows in $\C$ are equivalences. 

This proves that the colimit $\mathbf P\to \C$ is still a coCartesian fibration. 

We now need to show that the canonical morphisms $f(i)\to \mathbf P$ over $\C$ are in fact morphisms of coCartesian fibrations. 

This follows from the fact that we can write this as the composite of $$f(i)\to \int f \to \int f[W^{-1}].$$ By \cite[2.1.4]{hinich} again (and its proof), $\int f \to \int f[W^{-1}]$ corresponds to the natural transformation $(\int f)_d \to (\int f)_d[W_d^{-1}]$, and so is a morphism of coCartesian fibrations over $\C$, so it suffices to prove the same for $f(i)\to \int f$. But this follows from the proof of \Cref{lm : cartfiber}, where we see that the inclusion $X_s\to X$ sends $p_s$-coCartesian edges to $p$-coCartesian edges - and in $\int f$, a coCartesian edge over $\C\times I$ that furthermore lives over an identity in $I$ is cocartesian over $\C$.

Finally, we need to argue that if $f\to \D$ is a cocone in $\coCart_\C$, then the induced map $\mathbf P\to \D$ is also in $\coCart_\C$, i.e. it preserves coCartesian edges.  

The point is that while some edges in $\mathbf P$ do not lift to any $f(i)$ (there can be some zigzags) all coCartesian edges in $\mathbf P$ \emph{do} come from some $f(i)$. Indeed, consider an edge $e: x\to y$ in $\C$, together with a lift $\tilde x$ of $x$ to $\mathbf P$. As $\mathbf P$ is a localization of $\int f$, $\tilde x$ is the image of some $\overline x\in f(i)$, for some $i\in I$, and because $f(i)\to \mathbf P$ is over $\C$, $\overline x$ is a lift of $x$. Therefore we can find a coCartesian lift $\overline x\to e_!\overline x$ of $e$ in $f(i)$. Now we proved above that  $f(i)\to \mathbf P$ preserves coCartesian edges, so the image of $\overline x\to e_! \overline x$ is coCartesian in $\mathbf P$, starts at $\tilde x$ and lifts $e$. Any coCartesian edge with these properties will therefore be equivalent to this one. 

So now, if $\alpha$ is a coCartesian edge in $\mathbf P$ coming from $f(i)$, because $f(i)\to \D$ preserves coCartesian edges, the image of $\alpha$ in $\D$ is coCartesian : $\mathbf P\to\D$ preserves coCartesian edges. 
\end{proof}
\begin{rmk}
It follows formally that $\coCart_\C \subset\Cat_{/\C}$ is also closed under partially lax colimits, cf. \cite[Definition 4.11]{partlax}. Alternatively, by \cite[Theorem 4.13(b)]{partlax} these partially lax colimits can be described in terms of a certain localization of the Grothendieck construction, and the proof above applies verbatim to this more general case. 
\pend\end{rmk}
\begin{rmk}
  The inclusion $\coCart_\C\subset \Cat_{/\C}$ has a left adjoint \cite[Theorem 1.2]{gepnerhaugsengnik}, so it also preserves limits. This is more expected, as coCartesian fibrations are defined by certain right lifting properties.
  
  It would be interesting to describe a right adjoint similarly to what is done in \cite{gepnerhaugsengnik}. 
\pend\end{rmk}
We also obtain analogous results for cartesian fibrations, because $\C\mapsto \C\op$ is a self-equivalence of $\Cat$ which interchanges coCartesian fibrations and cartesian fibrations; and also the same results for left and right fibrations because $\Ss\subset \Cat$ is closed under colimits. We record this as: 
\begin{cor}
Let $\C$ be a category. The (non-full) subcategory inclusions $$\LFib_\C, \RFib_\C, \coCart_\C, \Cart_\C \subset \Cat_{/\C}$$ of respectively left fibrations, right fibrations, coCartesian fibrations and cartesian fibrations, preserve colimits. 

In the case of co/Cartesian fibrations, they are also stable under tensoring with arbitrary categories, and are thus also stable under arbitrary partially lax colimits. 
\end{cor}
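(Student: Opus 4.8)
The plan is to reduce all four inclusions to the single coCartesian case, which is precisely Proposition~\ref{prop : colim}, by means of two transfer principles: the self-equivalence $(-)\op$ of $\Cat$, and the straightening identification of fibrations with functors. First I would handle cartesian fibrations: the self-equivalence $(-)\op\colon\Cat\to\Cat$ induces, for each $\C$, an equivalence $\Cat_{/\C}\xrightarrow{\ \sim\ }\Cat_{/\C\op}$ sending $p\colon\D\to\C$ to $p\op\colon\D\op\to\C\op$, and since $p$ is a coCartesian fibration exactly when $p\op$ is a cartesian fibration, this restricts to an equivalence $\coCart_\C\simeq\Cart_{\C\op}$ compatible with the two inclusions into the ambient slices. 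As equivalences preserve and reflect colimits, Proposition~\ref{prop : colim} applied to $\C$ immediately yields that $\Cart_{\C\op}\subset\Cat_{/\C\op}$ preserves colimits, and letting $\C$ vary over all categories gives the statement for cartesian fibrations in general.

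Next I would treat left fibrations by straightening. Under the unstraightening equivalence, the inclusion $\LFib_\C\subset\coCart_\C$ corresponds to the fully faithful inclusion $\Fun(\C,\Ss)\subset\Fun(\C,\Cat)$ induced by $\Ss\subset\Cat$. Colimits in these functor categories are computed pointwise, and $\Ss\subset\Cat$ is closed under colimits — the inclusion admits the core functor $(-)^{\simeq}$ as a right adjoint, hence is a left adjoint and preserves colimits — so a pointwise colimit of space-valued functors remains space-valued. Thus $\LFib_\C\subset\coCart_\C$ preserves colimits, and composing with the colimit-preserving $\coCart_\C\subset\Cat_{/\C}$ of Proposition~\ref{prop : colim} settles the left-fibration case. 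Right fibrations then follow from left fibrations by the same $(-)\op$ transfer, since $(-)\op$ carries right fibrations over $\C$ to left fibrations over $\C\op$.

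Finally, for the tensoring and partially lax colimit claims I would reuse the observation already present in the proof of Proposition~\ref{prop : colim}: for $K\in\Cat$ and a coCartesian (resp.\ cartesian) fibration $p\colon\D\to\C$, the tensor $K\otimes(\D\to\C)$ is the composite $K\times\D\to\D\xrightarrow{p}\C$, whose first map is a coCartesian (resp.\ cartesian) fibration as a base change of $K\to\ast$, so that the composite is again of the required type (the cartesian case may alternatively be transferred along $(-)\op$). This argument visibly breaks for left and right fibrations, whose fibers would become $K\times(\text{space})$, which explains why these last assertions are restricted to the co/Cartesian cases. Closure under partially lax colimits then follows formally from closure under ordinary colimits together with closure under tensoring, exactly as in the remark following Proposition~\ref{prop : colim}, applied separately to $\coCart_\C$ and to $\Cart_\C$.

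The only real subtlety, and hence the step I would expect to require the most care, is checking that the straightening equivalence genuinely identifies the non-full inclusion $\coCart_\C\subset\Cat_{/\C}$ together with its full subinclusion $\LFib_\C$ in a colimit-compatible way, so that the pointwise computation in $\Fun(\C,\Cat)$ really transports to colimits computed inside $\coCart_\C$; and, on the $(-)\op$ side, that the self-equivalence respects the $\Cat$-tensoring up to replacing $K$ by $K\op$, which is harmless since $K$ ranges over all of $\Cat$. Everything else is formal once Proposition~\ref{prop : colim} is in hand, so I regard this corollary as genuinely a corollary rather than an independent result.
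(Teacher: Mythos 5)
Your proof is correct and takes essentially the same route as the paper, which deduces the corollary from Proposition \ref{prop : colim} in one line: the self-equivalence $(-)\op$ of $\Cat$ exchanges coCartesian and cartesian fibrations, and the left/right fibration cases follow because $\Ss\subset\Cat$ is closed under colimits (i.e.\ exactly your pointwise argument under straightening). Your treatment of the tensoring and partially lax colimit claims likewise matches the observation in the proof of Proposition \ref{prop : colim} and the remark following it.
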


Because $\coCart_\C\subset \Cat_{/\C}$ is a (non-full) subcategory stable under tensoring with any $K\in \Cat$, it acquires a canonical $\Cat$-linear structure compatible with the inclusion. 

Similarly, $\Fun(\C, \Cat)$ has a canonical $\Cat$-linear structure. A folklore fact, which will be important below, is that the un/straightening equivalence is $\Cat$-linear. We prove this below.

\begin{lm}\label{lm : catlinear}
For any category $\C$, the un/straightening-equivalence $\coCart_\C\simeq \Fun(\C,\Cat)$ is canonically $\Cat$-linear. 
\end{lm}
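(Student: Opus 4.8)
The plan is to realize both $\Cat$-module structures as \emph{restriction of scalars} along symmetric monoidal functors out of $\Cat$, and then to observe that the unstraightening equivalence commutes with these structure maps, so that it becomes a morphism of commutative $\Cat$-algebras — which automatically forces it to be $\Cat$-linear. The whole point is that working with \emph{cartesian} structures turns the symmetric monoidal coherences into properties, so that very little actually has to be constructed.

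First I would record that both $\coCart_\C$ and $\Fun(\C,\Cat)$ are cartesian closed presentable categories (the latter obviously, the former by transport of structure along unstraightening, which preserves products), hence commutative algebras in $\PrL$ for their cartesian symmetric monoidal structures. Next I would identify the relevant unit maps: on the functor side, the constant-diagram functor $u_{\Fun}\colon \Cat = \Fun(\pt,\Cat)\to \Fun(\C,\Cat)$, $K\mapsto \underline K$, which is a left adjoint (to $\lim$) and preserves products; and on the fibration side the base-change functor $u_{\coCart}\colon \Cat = \coCart_\pt \to \coCart_\C$, $K\mapsto (\C\times K\to \C)$, which likewise preserves products and colimits. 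Both are therefore maps in $\CAlg(\PrL)$, exhibiting the two categories as commutative $\Cat$-algebras. Unwinding the definitions, the canonical $\Cat$-module structures in the statement (the tensoring $K\otimes(\D\to\C) \simeq (K\times\D\to \C)$ inherited from $\Cat_{/\C}$, and the Day/pointwise structure of definition \ref{defn : canday}) are precisely the module structures obtained from these algebra structures by restriction of scalars, since $u_{\coCart}(K)\times_\C \D \simeq K\times \D$ and $u_{\Fun}(K)\times F \simeq (c\mapsto K\times F(c))$. Using that $\CAlg(\PrL)_{\Cat/}\simeq \CAlg(\Mod_\Cat(\PrL))$ and that the forgetful functor $\CAlg(\Mod_\Cat(\PrL))\to \Mod_\Cat(\PrL)$ carries algebra maps under $\Cat$ to $\Cat$-linear maps, the lemma reduces to showing that unstraightening lifts to a morphism in $\CAlg(\PrL)_{\Cat/}$, i.e. to a symmetric monoidal functor commuting with $u_{\Fun}$ and $u_{\coCart}$.

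The symmetric monoidal upgrade is then essentially free. The unstraightening equivalence preserves finite products — pointwise products $F\times G$ are carried to fiber products $\D_F\times_\C \D_G$, which are the products in $\coCart_\C\subset \Cat_{/\C}$, and the terminal functor goes to $\id_\C$ — and a product-preserving functor between cartesian monoidal categories is canonically symmetric monoidal (\cite[\S 2.4.1]{HA}), so unstraightening is a morphism in $\CAlg(\PrL)$. To see that it commutes with the unit maps, I would invoke the naturality of straightening in the base: applying the natural equivalence $\Fun(-,\Cat)\simeq \coCart_{(-)}$ (for the restriction/pullback functoriality) to the map $\C\to \pt$ yields a commuting square identifying the composite with $u_{\Fun}$ with the composite with $u_{\coCart}$ (using $\coCart_\pt \simeq \Cat \simeq \Fun(\pt,\Cat)$). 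Since all four functors preserve products, and for a cartesian target the forgetful functor from symmetric monoidal functors to all functors is fully faithful with image the product-preserving ones, this square and its coherences automatically lift to $\CAlg(\PrL)$; hence unstraightening is a morphism of $\Cat$-algebras and therefore $\Cat$-linear.

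The main obstacle is the bookkeeping of coherences: promoting the objectwise matching of the two module structures and the naturality square to genuine data in $\CAlg(\Mod_\Cat(\PrL))$, rather than a mere pointwise equivalence. The crucial simplification — the same one exploited throughout this paper — is that passing to cartesian monoidal structures converts the \emph{structure} of being symmetric monoidal into the \emph{property} of preserving finite products; this is what lets us promote the plain natural equivalence of straightening, together with product-preservation, to a symmetric monoidal equivalence under $\Cat$ with no further choices. The only genuinely separate input is the naturality of straightening itself, which is available from \cite{HHLN} (cf. remark \ref{rmk : naturalyoneda}) and does not rely on the main theorems of this paper, so that no circularity arises with their later use of this lemma.
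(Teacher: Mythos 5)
Your proof is correct and follows essentially the same route as the paper's: both arguments obtain a commuting square over $\C\to\pt$ whose vertical legs are the unit functors $K\mapsto\underline K$ and $K\mapsto(\C\times K\to\C)$ from the naturality of un/straightening in the base, observe that all functors involved preserve products so the square lifts for free to commutative algebras (cartesian monoidality being a property), and then conclude via $\CAlg(-)_{\Cat/}\simeq \CAlg(\Mod_\Cat(-))$ and the forgetful functor to $\Cat$-modules. The only differences are cosmetic: you work in $\PrL$ rather than $\widehat{\Cat}$ and invoke naturality directly for the pullback functoriality, whereas the paper starts from the naturality square of \cite[Corollary A.32]{gepnerhaugsengnik} and passes to vertical right adjoints to produce the same square of product-preserving functors.
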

\begin{proof}
Because the un/straightening equivalence is natural in $\C$, see \cite[Corollary A.32]{gepnerhaugsengnik}, and the terminal category is simply a point $*$, we have a commutative square of the form
\[\begin{tikzcd}
	{\coCart_\C} & {\Fun(\C,\Cat)} \\
	{\coCart_*} & {\Fun(*,\Cat)}
	\arrow["\simeq", from=1-1, to=1-2]
	\arrow[from=1-1, to=2-1]
	\arrow[from=1-2, to=2-2]
	\arrow["\simeq", from=2-1, to=2-2]
\end{tikzcd}\]

The horizontal functor is an equivalence, so we can take right adjoints vertically, and thus get a commutative square
\[\begin{tikzcd}
	{\coCart_\C} & {\Fun(\C,\Cat)} \\
	{\coCart_*} & {\Fun(*,\Cat)}
	\arrow["\simeq", from=1-1, to=1-2]
	\arrow[from=2-1, to=1-1]
	\arrow[from=2-2, to=1-2]
	\arrow["\simeq", from=2-1, to=2-2]
\end{tikzcd}\]
Furthermore, all the functors involved in this natural square are product-preserving (the horizontal ones are equivalences, and the vertical ones are right adjoints), so that this can be seen as a square in $\CAlg(\widehat{\Cat})$ where we give every category the \emph{cartesian} monoidal structure\footnote{This is unrelated to the monoidal Grothendieck construction which is the focus of this paper -  this is the much simpler observation that for functors between cartesian monoidal categories, symmetric monoidality is the \emph{property} of preserving products. }.

In particular, the equivalence $\coCart_\C \simeq \Fun(\C,\Cat)$ can be viewed as an equivalence in $\CAlg(\widehat{\Cat})_{\Cat/}$, but the latter is equivalent to $\CAlg(\Mod_\Cat(\widehat{\Cat}))$ (see \cite[Corollary 3.4.1.7]{HA} and \cite[Corollary 4.5.1.6]{HA}) so has a forgetful functor to $\Mod_\Cat(\widehat{\Cat})$. 

Finally, we note that the module structures that we obtain this way are the natural ones.
\end{proof}
\begin{rmk}
With a bit more work about ``passing to right adjoints'', we could make this $\Cat$-linear structure natural in $\C$. However, it would take more time, and we do not really need that much detail, so we do not wish to linger on naturality here.  
\pend\end{rmk}
We obtain:
\begin{cor}\label{cor : unstraightisextended}
Let $\C$ be a category. The unstraightening functor $\Fun(\C,\Cat)\to \coCart_\C$ is the unique colimit-preserving, $\Cat$-linear extension of its restriction along the Yoneda embedding $\C\op\to \Fun(\C,\Ss)\to \Fun(\C,\Cat)$ - this restriction is given by $x\mapsto (\C_{x/}\to \C)$. 

The same holds for the composition of the unstraightening functor with the forgetful functor, $\Fun(\C,\Cat)\to \coCart_\C\subset \Cat_{/\C}$.  
\end{cor}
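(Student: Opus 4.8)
The plan is to exhibit $\Fun(\C,\Cat)$ as the free $\Cat$-linear presentable category generated by $\C\op$, check that unstraightening is a $\Cat$-linear colimit-preserving functor with the stated restriction to representables, and then deduce uniqueness formally from the universal property.

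First I would record the universal property. By \cite[Proposition 4.8.1.3.]{HA} (as used in definition \ref{defn : canday}), $\Fun(\C,\Ss)$ is the free cocompletion of $\C\op$ in $\PrL$, and tensoring with $\Cat$ yields $\Fun(\C,\Ss)\otimes\Cat\simeq\Fun(\C,\Cat)$ as the free $\Cat$-module on $\Fun(\C,\Ss)$ in $\Mod_\Cat(\PrL)$. Composing these two freeness statements, for any presentable $\Cat$-module $\D$ restriction along the composite $\C\op\to\Fun(\C,\Ss)\to\Fun(\C,\Cat)$ induces an equivalence between the space of $\Cat$-linear colimit-preserving functors $\Fun(\C,\Cat)\to\D$ and the space of functors $\C\op\to\D$. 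This is precisely the uniqueness mechanism required by the statement.

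Next I would verify that both the target and the functor fit this framework. By proposition \ref{prop : colim}, $\coCart_\C\subset\Cat_{/\C}$ is closed under colimits and under tensoring with any $K\in\Cat$, so $\coCart_\C$ is a cocomplete $\Cat$-module; it is presentable since straightening identifies it with $\Fun(\C,\Cat)$. The unstraightening functor is an equivalence, hence colimit-preserving, and it is $\Cat$-linear by lemma \ref{lm : catlinear}; thus it is governed by the universal property above. It remains to identify its restriction to representables: the image of $x\in\C\op$ is the corepresentable functor $\Map_\C(x,-)\colon\C\to\Ss\subset\Cat$, whose unstraightening (as a left, hence coCartesian, fibration) is the coslice projection $\C_{x/}\to\C$, the standard computation of the unstraightening of a corepresentable (see \cite{HTT}). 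The first step then gives that unstraightening is the unique $\Cat$-linear colimit-preserving extension of $x\mapsto(\C_{x/}\to\C)$. For the final sentence, I would note that the inclusion $\coCart_\C\subset\Cat_{/\C}$ again preserves colimits and the $\Cat$-action by proposition \ref{prop : colim}, so the composite remains $\Cat$-linear and colimit-preserving with the same restriction to representables, now regarded inside $\Cat_{/\C}$; applying the universal property with $\D=\Cat_{/\C}$ concludes.

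The main obstacle I anticipate is not any single hard computation but the clean assembly of the two layers of freeness together with the correct $\Cat$-linearity: one must ensure that $\coCart_\C$ (and $\Cat_{/\C}$) genuinely carries a \emph{presentable} $\Cat$-module structure for which the unstraightening functor and the inclusion are linear, so that the freeness of $\Fun(\C,\Cat)$ can legitimately be invoked. This is exactly the role played by proposition \ref{prop : colim} and lemma \ref{lm : catlinear}, and once those are in hand the argument is purely formal.
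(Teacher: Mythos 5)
Your proposal is correct and follows essentially the same route as the paper's own proof: uniqueness via the universal property of presheaves combined with the equivalence $\Fun(\C,\Cat)\simeq\Fun(\C,\Ss)\otimes\Cat$, colimit-preservation from unstraightening being an equivalence, $\Cat$-linearity from lemma \ref{lm : catlinear}, and proposition \ref{prop : colim} to handle the composite with the inclusion into $\Cat_{/\C}$. The only difference is that you spell out the universal property and the identification of the restriction along the Yoneda embedding in more detail than the paper does, which is fine.
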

\begin{proof}
The uniqueness of such an extension follows from the equivalence $\Fun(\C,\Cat)\simeq\Fun(\C,\Ss)\otimes\Cat$ and the universal property of presheaves - here, $\otimes$ is the Lurie tensor product of presentable categories (see \cite[Section 4.8.1]{HA}). 

For the first part, the un/straightening functor is an equivalence so it obviously preserves colimits, and \Cref{lm : catlinear} proves that it is $\Cat$-linear. 

For the second part of the statement, the same thing works except that now we use \Cref{prop : colim} to prove that colimit-preservation and $\Cat$-linearity are preserved under the forgetful functor to $\Cat_{/\C}$. 
\end{proof}

\bibliographystyle{alpha}
\bibliography{Biblio.bib}
\textsc{Institut for Matematiske Fag, K\o benhavns Universitet, Danmark}

\textit{Email adress : }\texttt{maxime.ramzi@math.ku.dk}
\end{document}